\documentclass[a4paper,10pt]{amsart}

\title[Trace ideals for FIO with non-smooth
symbols]{Trace ideals for Fourier integral operators with
non-smooth symbols III}
\usepackage{cite}
\usepackage{amssymb}
\usepackage{latexsym}
\usepackage{amsmath}
\usepackage{mathrsfs}
\usepackage[X2,T1]{fontenc}
\usepackage{amsthm}
\usepackage{upgreek}
\usepackage{calc}                         %Detta behšvs fšr att fixa
%                                         %med numrering som man vill.

\newcommand{\scal}[2]{\langle #1,#2\rangle}

\newcommand{\rr}[1]{\mathbf R^{#1}}

\newcommand{\essup}[1]{\underset{#1}{\operatorname {ess\, sup}}}

\newcommand{\nm}[2]{\Vert #1\Vert _{#2}}
\newcommand{\nmm}[1]{\Vert #1\Vert }
\newcommand{\abp}[1]{\vert #1\vert}

\newcommand{\op}{\operatorname{Op}}
\newcommand{\opp}{\operatorname{\mathsf{Op}}}

\newcommand{\sets}[2]{\{ \, #1\, ;\, #2\, \} }
\newcommand{\ep}{\varepsilon}
\newcommand{\fy}{\varphi}

\newcommand{\cdo}{\, \cdot \, }

\newcommand{\supp}{\operatorname{supp}}

\newcommand{\eabs}[1]{\langle #1\rangle}

\newcommand{\vrum}{\vspace{0.1cm}}

\setcounter{section}{\value{section}-1}   %fixar sektionsnumer sΠatt
%                                         %dessa bšrjar med 0.

\numberwithin{equation}{section}          %Detta gšr att man fŒr
%                                         %formelnummer av typ
%                                         %(sec.nr).
\newtheorem{thm}{Theorem}
\numberwithin{thm}{section}
\newtheorem*{tom}{\rubrik}
\newcommand{\rubrik}{}
\newtheorem{prop}[thm]{Proposition}
\newtheorem{cor}[thm]{Corollary}
\newtheorem{lemma}[thm]{Lemma}

\theoremstyle{definition}

\newtheorem{defn}[thm]{Definition}

\theoremstyle{remark}

\newtheorem{rem}[thm]{Remark}              %T o m hit Šr bara allmŠn
					   %goja som man bara kopierar
					   %till alla textfiler som
					   %man skriver.

\author{Joachim Toft}

\address{Department of Mathematics and Systems Engineering,
V{\"a}xj{\"o} University, Sweden}

\email{joachim.toft@vxu.se}

\author{Francesco Concetti}

\address{Department of Mathematics,
Turin University, Italy}

\email{francesco.concetti@unito.it}

\author{Gianluca Garello}

\address{Department of Mathematics,
Turin University, Italy}

\email{gianluca.garello@unito.it}

\begin{document}

\begin{abstract}
We consider Fourier integral operators with symbols in modulation
spaces and non-smooth phase functions whose second orders of
derivatives belong to certain types of modulation space. We establish
continuity and Schatten-von Neumann properties of such operators when
acting on modulation spaces.
\end{abstract}

\maketitle

\section{Introduction}\label{sec0}

\par

The aim of this paper is to investigate Fourier integral operators
with non-smooth amplitudes (or symbols), when acting on (general, or weighted)
modulation spaces.  The amplitudes are assumed to belong to
appropriate modulation spaces, or more generally, appropriate coorbit
spaces of modulation space type. Since these coorbit spaces contains
spaces of smooth functions which belongs to certain mixed Lebesgue
spaces, together with all their derivatives, it follows that we are
able to obtain results for certain Fourier integral operator with
smooth symbols. It is assumed that the phase functions are continuous
functions with second orders of derivatives belonging to appropriate
modulation spaces (i.{\,}e. weighted "Sj{\"o}strand classes") and
satisfying appropriate non-degeneracy conditions. For such Fourier
integral operators we investigate continuity and compactness
properties when acting on modulation spaces. Especially we are
concerned with detailed compactness investigations of such operators
in background of  Schatten-von Neumann theory, when acting on Hilbert
modulation spaces. (Here we recall that the spaces of trace-class or
Hilbert-Schmidt operators are particular classes of Schatten-von
Neumann classes.) More precisely, we establish sufficient  conditions
on the amplitudes and phase functions in order for the corresponding
Fourier integral operators to be  Schatten-von Neumann of certain
degree. Since Sobolev spaces of Hilbert type are special cases of
these Hilbert modulation spaces, it follows that our results can be
applied on certain problems involving such spaces.

\par

Furthermore, by letting the involved weight functions be trivially
equal to one, these Sobolev spaces are equal to $L^2$, and our results
here then generalize those in \cite{CT1,CT2}, where similar questions
are discussed when the amplitudes and second order of derivatives
belong to classical or non-weighted modulation spaces.

\par

On the other hand, the framework of the investigations in the present
paper as well as in \cite{CT1,CT2} is to follow some ideas by
A. Boulkhemair in \cite{Bu1} and localize the Fourier integral
operators in terms of short-time Fourier transforms, and then making
appropriate Taylor expansions and estimates. In fact, in \cite{Bu1},
Boulkhemair considers a certain class of Fourier
integral operators were the corresponding symbols are defined without
any explicit regularity assumptions and with only small
regularity assumptions on the phase functions. The symbol class here
is, in the present paper, denoted by $M^{\infty ,1}$, is sometimes
called the "Sj{\"o}strand class", and
contains $S^0_{0,0}$, the set of smooth functions which are bounded
together with all their derivatives. In time-frequency
community, $M^{p,q}$ is known as a (classical or non-weighted)
modulation space with exponents $p\in [1,\infty ]$ and $q\in [1,\infty
]$. (See e.{\,}g. \cite {Gc2, Fe4, FG1} or below for strict
definition.) Boulkhemair
then proves that such operators are uniquely extendable to continuous
operators on $L^2$. In particular it follows that pseudo-differential
operators with symbols in $M^{\infty ,1}$ are $L^2$-continuous, which
was proved by J. Sj{\"o}strand in \cite {Sj1}, where it seems that
$M^{\infty ,1}$ was used for the first time in this context.

\par

Boulkhemair's result was extended in \cite{CT1, CT2}, where it is
proved that if the amplitude belongs to the classical modulation space
$M^{p,1}$, then the corresponding Fourier integral operator is
Schatten-von Neumann operator of order $p\in [1,\infty ]$ on $L^2$. In
\cite{CT2} it is also proved that if the amplitude only depends on the
phase space variables and belongs to $M^{p,p}$, then the corresponding
Fourier integral operator is continuous from $M^{p',p'}$ to
$M^{p,p}$. If in addition $1\le p\le 2$, then it is also proved that
the operator is Schatten-von Neumann of order $p$ on $L^2$.

\par

We remark that the assumptions on the phase functions imply that these
functions are two times continuously differentiable, which is usually
violated for "classical" Fourier integral operators (see
e.{\,}g. \cite {H,RuS1,RuS2,RuS3}). For example, this condition is not
fulfilled in general when the phase function is homogenous of degree
one in the frequency variable. We refer to \cite{RuS1,RuS2,RuS3} for
recent contribution to the theory of Fourier integral operators
with non-smooth symbols, and in certain domains small regularity
assumptions of the phase functions.

\par

In order to be more specific we recall some definitions. Assume that
$p,q\in [1,\infty ]$ and that $\omega \in \mathscr P(\rr {2n})$ (see
Section \ref{sec1} for the definition of $\mathscr P(\rr n)$). Then
the \emph{modulation space} $M^{p,q}_{(\omega )}(\rr
n)$ is the set of all $f\in \mathscr S'(\rr n)$ such that
\begin{equation}\label{modnorm}
\nm {f}{M^{p,q}_{(\omega )}} \equiv \Big( \int _{\rr n}\Big( \int 
_{\rr n} |\mathscr F(f \tau_x\chi)(\xi)\omega (x,\xi )|^p\,
dx\Big)^{q/p}\, d\xi \Big)^{1/q}<\infty
\end{equation}
(with obvious modification when $p=\infty$ or
$q=\infty$). Here $\tau _x$ is the translation operator $\tau _x\chi
(y)=\chi (y-x)$, $\mathscr F$ is the Fourier transform on $\mathscr
S'(\rr n)$ which is given by
$$
\mathscr Ff(\xi ) = \widehat f(\xi )\equiv (2\pi )^{-n/2}\int _{\rr n}
f(x)e^{-i\scal x\xi}\, dx
$$
when $f\in \mathscr S(\rr n)$, and $\chi \in \mathscr S(\rr
n)\setminus 0$ is called a \emph{window function} which is kept
fixed. For conveniency we set $M^{p,q}=M^{p,q}_{(\omega )}$ when
$\omega =1$.

\par

Modulation spaces were introduced by H. Feichtinger in
\cite{Fe4}. 
The basic theory of such spaces were thereafter extended
by
Feichtinger and Gr{\"o}chenig in \cite{FG1,FG2}, where the coorbit
space theory was established. Here we note that the amplitude classes
in the present paper consist of coorbit spaces, defined in such way
that their norms are given by \eqref{modnorm}, after replacing the
$L^p$ and $L^q$ norms by \emph{mixed} Lebesgue norms and interchanging
the order of integration. (See Subsection \ref{ssec1.2} and Section
\ref{sec2}.) During the last twenty years, modulation spaces have been
an active fields of research (see e.{\,}g. \cite {Gc2, Fe4, Fe5, PT1,
Te, To7}). They are rather similar to Besov spaces (see
\cite{BaC,ST,To7} for sharp embeddings) and it has appeared that they
are useful to have in background in time-frequency analysis and to
some extent also in pseudo-differential calculus.

\par

Next we discuss the definition of Fourier integral operators. For
conveniency we restricts ourself to operators which belong to
$\mathscr L(\mathscr S(\rr {n_1}),\mathscr S'(\rr {n_2}))$. Here we let
$\mathscr L(V_1,V_2)$ denote the set of all linear and continuous
operators from $V_1$ to $V_2$, when $V_1$ and $V_2$ are topological
vector spaces. For any appropriate $a \in \mathscr S'(\rr {N+m})$ (the
\emph{symbol} or \emph{amplitude}) for $N=n_1+n_2$, and real-valued
$\fy \in C(\rr {N+m})$ (the \emph{phase function}), the Fourier
integral operator $\op _\fy (a)$ is defined by the formula
\begin{equation}\label{fintop}
\op _\fy (a)f(x) =(2\pi )^{-N/2}\iint _{\rr {n_1+m}}a(x,y,\xi
)f(y)e^{i\fy (x,y,\xi )}\, dyd\xi ,
\end{equation}
when $f\in \mathscr S(\rr {n_1})$. Here the integrals should be
interpreted in distribution sense if necessary. By letting
$m=n_1=n_2=n$, and choosing symbols and phase functions in appropriate
ways, it follows that the pseudo-differential operator
$$
\op (a)f(x) =(2\pi )^{-n}\iint _{\rr n}a(x,y,\xi )f(y)e^{i\scal{x-y}\xi}
\, dyd\xi 
$$
is a special case of Fourier integral operators. Furthermore, if
$t\in \mathbf R$ is fixed, and $a$ is an appropriate function or
distribution on $\rr {2n}$ instead of $\rr {3n}$, then the definition
of the latter pseudo-differential operators cover the definition of
pseudo-differential operators of the form
\begin{equation}\label{e0.5}
a_t(x,D)f(x) = (2\pi )^{-n}\iint _{\rr {2n}}a((1-t)x+ty,\xi
)f(y)e^{i\scal{x-y}\xi}\, dyd\xi .
\end{equation}

\par

On the other hand, in the framework of harmonic analysis it follows
that the map $a\mapsto a_t(x,D)$ from $\mathscr S(\rr {2n})$ to
$\mathscr L(\mathscr S(\rr n),\mathscr S'(\rr n))$ is uniquely
extendable to a bijection from $\mathscr S'(\rr {2n})$ to $\mathscr
L(\mathscr S(\rr n),\mathscr S'(\rr n))$.

\par

In the litterature it is usually assumed that $a$ and $\fy$ in
\eqref{fintop} are smooth functions. For example, if $n_1=n_2=n$,
$a\in \mathscr S(\rr {2n+m})$ and $\fy \in
C^\infty (\rr {2n+m})$ satisfy $\fy ^{(\alpha )}\in S^0_{0,0}(\rr
{2n+m})$ when $|\alpha |=N_1$ for some integer $N_1\ge 0$, then it is
easily seen that $\op _\fy (a)$ is continuous on $\mathscr S(\rr n)$
and extends to a continuous map from $\mathscr S'(\rr n)$ to $\mathscr
S(\rr n)$. In \cite{AF} it is proved that if $\fy ^{(\alpha )}\in
S^0_{0,0}(\rr {2n+m})$ for all multli-indices $\alpha$ such that
$|\alpha |=2$ and satisfies
\begin{equation}\label{detphicond}
\left | \det \left ( \begin{matrix}
                     \fy ''_{x,y} & & \fy ''_{x,\xi}
\\[1ex]
                     \fy ''_{y,\xi } &  &\fy ''_{\xi ,\xi}
                     \end{matrix}
\right ) \right | \ge \mathsf d
\end{equation}
for some $\mathsf d >0$, then the definition of $\op _\fy$ extends
uniquely to any $a\in S^0_{0,0}(\rr {2n+m})$, and then $\op _\fy (a)$
is continuous on $L^2(\rr n)$. Next assume that $\fy$ instead
satisfies $\fy ^{(\alpha )}\in M^{\infty ,1}(\rr {3n})$ for all
multi-indices $\alpha$ such that
$|\alpha |=2$ and that \eqref{detphicond} holds for some $\mathsf d
>0$. This implies that the condition on $\fy$ is relaxed since
$S^0_{0,0} \subseteq M^{\infty ,1}$. Then Boulkhemair improves the
result in \cite{AF} by proving that the definition of $\op _\fy$
extends uniquely to any $a\in M^{\infty ,1}(\rr {2n+m})$, and that
$\op _\fy (a)$ is still continuous on $L^2(\rr n)$.

\par

In Section \ref{sec2} we discuss continuity and Schatten-von Neumann
properties for Fourier integral operators which are related to those
which were considered by Boulkhemair. More precisely, assume that
$\omega$, $\omega _j$ for $j=1,2$ and $v$ are appropriate weight
functions, $\fy \in M^{\infty ,1}_{(v)}$ and $1<p<\infty$. Then we
prove in Subection \ref{ssec2.4} that the definition of $a\mapsto \op
_\fy (a)$ from $\mathscr S$ to $\mathscr L(\mathscr S(\rr n),\mathscr
S'(\rr n)$ extends uniquely to any $a\in M^{\infty ,1}_{(\omega )}$,
and that $\op _\fy (a)$ is continuous from $M^p_{(\omega _1)}$ to
$M^{p}_{(\omega _2)}$. In particular we recover Boulkhemair's result
by letting $\omega =\omega _j=v=1$ and $p=2$.

\par

In Subsection \ref{ssec2.5} we connsider more general Fourier integral
operators, where we assume that the amplitudes belong to coorbit
spaces which, roughly speaking, are like $M^{p,q}_{(\omega )}$ for
$p,q\in [1,\infty ]$ in certain variables and like $M^{\infty
,1}_{(\omega )}$ in the other variables. (Note here that $M^{\infty
,1}_{(\omega )}$ is contained in $M^{\infty ,q}_{(\omega )}$ in view
of Proposition \ref{p1.4}.) If $q\le p$, then we prove that such
Fourier integral operators are continuous from $M^{p',p'}_{(\omega
_1)}$ to $M^{p,p}_{(\omega _2)}$. Furthermore, by interpolation
between the latter result and our extension of Boulkhemair's result we
prove that if $q\le \min (p,p')$, then these Fourier integral
operators belong to $\mathscr I_p(M^{2,2}_{(\omega
_1)},M^{2,2}_{(\omega _2)})$. Here $\mathscr I_p(\mathscr H_1,\mathscr
H_2)$ denotes the set of Schatten-von Neumann operators from the
Hilbert space $\mathscr H_1$ to $\mathscr H_2$ of order $p$. This
means that $T\in \mathscr I_p(\mathscr H_1,\mathscr H_2)$ if and only
if $T$ linear and continuous operators $T$ from $\mathscr H_1$ to
$\mathscr H_2$ which satisfy
$$
\nm T{\mathscr I_p}\equiv \sup \Big ( \sum|(Tf_j,g_j)_{\mathscr
H_2}|^p\Big ) ^{1/p}<\infty ,
$$
where the supremum should be taken over all orthonormal
sequences $(f_j)$ in $\mathscr H_1$ and $(g_j)$ in $\mathscr H_2$.

\par

In Section \ref{sec3} we list some consequences of our general results
in Section \ref{sec2}. For example, assume that $p,q\in [1,\infty ]$,
$a(x,y,\xi )=b(x,\xi )$, for some $b\in M^{p,q}_{(\omega )}(\rr
{2n})$, and that
\begin{equation}\label{detphicond2}
|\det (\fy ''_{y,\xi})|\ge \mathsf d
\end{equation}
holds for some constant $\mathsf d >0$. Then it follows from the
results in Section \ref{sec2} that if $q=p$, then $\op _\fy (a)$ is
continuous from $M^{p',p'}_{(\omega _1)}$ to $M^{p,p}_{(\omega
_2)}$. Furthermore, if in addition \eqref{detphicond} and $q\le \min
(p,p')$ hold, then $\op _\fy (a)\in \mathscr I_p$.

\par

%%%%%%%%%%%%%%%%%%%%%%%%%%%%%%%%%%%
\section{Preliminaries}\label{sec1}
%%%%%%%%%%%%%%%%%%%%%%%%%%%%%%%%%%%

\par

In this section we discuss basic properties for modulation
spaces. The proofs are in many cases omitted since they can
be found in \cite {Fe2, Fe3, Fe4, Fe5, FG1, FG2, FG4, Gc2,
To1, To2, To7}.

We start by discussing some notations. The duality between a
topological vector space and its dual is denoted by $\scal \cdo \cdo
$. For admissible $a$ and $b$ in $\mathscr S'(\rr n)$, we set
$(a,b)=\scal a{\overline b}$, and it is obvious that $(\cdo ,\cdo )$
on $L^2$ is the usual scalar product.

\par

Assume that $\mathscr B_1$ and $\mathscr B_2$ are topological
spaces. Then $\mathscr B_1\hookrightarrow \mathscr B_2$ means that
$\mathscr B_1$ is continuously embedded in $\mathscr B_2$. In the case
that $\mathscr B_1$ and $\mathscr B_2$ are Banach spaces, $\mathscr
B_1\hookrightarrow \mathscr B_2$ is equivalent to $\mathscr
B_1\subseteq \mathscr B_2$ and $\nm x{\mathscr B_2}\le C\nm x{\mathscr
B_1}$, for some constant $C>0$ which is independent of $x\in \mathscr
B_1$.

\medspace

Let $\omega, v\in L^\infty _{loc}(\rr {n})$ be positive
functions. Then $\omega$ is called $v$-\textit{moderate} if
\begin{equation}\label{moderate}
\omega(x+y) \leq C \omega(x) v(y), \quad x,y \in \rr {n} ,
\end{equation}
for some constant $C>0$, and if $v$ in \eqref{moderate} can be chosen
as a polynomial, then $\omega$ is called polynomially
moderated. Furthermore, $v$ is called \emph{submultiplicative} if
\eqref{moderate} holds for $\omega =v$. We denote by
$\mathscr P(\rr {n})$ the set of all polynomially moderated
functions on $\rr {n}$.

\par

%%%%%
\subsection{Modulation spaces}\label{ssec1.1}
%%%%%
Next we recall some properties on modulation spaces. We remark  that
the definition of modulation spaces $M^{p,q}_{(\omega )}(\rr n)$,
given in \eqref{modnorm} for $p,q\in[1, \infty]$, is independent of
the choice of the window $\chi\in \mathscr S(\mathbb R^n)\setminus 0$.
Moreover different choices of $\chi$ give rise to equivalent
norms. (See Proposition \ref{p1.4} below).

\par

For conveniency we set $M^p _{(\omega )}= M^{p,p}_{(\omega
)}$. Furthermore we  set $M^{p,q}=M^{p,q}_{(\omega )}$ if $\omega
\equiv 1$.

\par

The proof of the following proposition is omitted, since the results
can be found in \cite {Fe2, Fe3, Fe4, Fe5, FG1, FG2, FG4, Gc2,To1,
To2, To7}. Here and in what follows, $p'\in[1,\infty]$ denotes the
conjugate exponent of $p\in[1,\infty]$, i.{\,}e. $1/p+1/p'=1$ should
be fulfilled.

\par

\begin{prop}\label{p1.4}
Assume that $p,q,p_j,q_j\in [1,\infty ]$ for $j=1,2$, and $\omega
,\omega _1,\omega _2,v\in \mathscr P(\rr {2n})$ are such that $\omega$
is $v$-moderate and $\omega _2\le C\omega _1$ for some constant
$C>0$. Then the following are true:
\begin{enumerate}
\item[{\rm{(1)}}] if $\chi \in M^1_{(v)}(\rr n)\setminus 0$, then $f\in
M^{p,q}_{(\omega )}(\rr n)$ if and only if \eqref {modnorm} holds,
i.{\,}e. $M^{p,q}_{(\omega )}(\rr n)$ is independent of the choice of
$\chi$. Moreover, $M^{p,q}_{(\omega )}$ is a Banach space under the
norm in \eqref{modnorm}, and different choices of $\chi$ give rise to
equivalent norms;

\vrum

\item[{\rm{(2)}}] if  $p_1\le p_2$ and $q_1\le q_2$  then
$$
\mathscr S(\rr n)\hookrightarrow M^{p_1,q_1}_{(\omega _1)}(\rr
n)\hookrightarrow M^{p_2,q_2}_{(\omega _2)}(\rr n)\hookrightarrow
\mathscr S'(\rr n)\text ;
$$

\vrum

\item[{\rm{(3)}}] the $L^2$ product $( \cdo ,\cdo )$ on $\mathscr
S$ extends to a continuous map from $M^{p,q}_{(\omega )}(\rr
n)\times M^{p'\! ,q'}_{(1/\omega )}(\rr n)$ to $\mathbf C$. On the
other hand, if $\nmm a = \sup \abp {(a,b)}$, where the supremum is
taken over all $b\in \mathscr {S}(\rr n)$ such that
$\nm b{M^{p',q'}_{(1/\omega )}}\le 1$, then $\nmm {\cdot}$ and $\nm
\cdot {M^{p,q}_{(\omega )}}$ are equivalent norms;

\vrum

\item[{\rm{(4)}}] if $p,q<\infty$, then $\mathscr S(\rr n)$ is dense in
$M^{p,q}_{(\omega )}(\rr n)$. The dual space of $M^{p,q}_{(\omega
)}(\rr n)$ can be identified
with $M^{p'\! ,q'}_{(1/\omega )}(\rr n)$, through the form $(\cdo  ,\cdo
)_{L^2}$. Moreover, $\mathscr S(\rr n)$ is weakly dense in $M^{\infty
}_{(\omega )}(\rr n)$.
\end{enumerate}
\end{prop}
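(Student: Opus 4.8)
The plan is to reduce every assertion to properties of the short-time Fourier transform
$$
V_\chi f(x,\xi )=\mathscr F(f\,\tau _x\chi )(\xi ),
$$
so that $\nm f{M^{p,q}_{(\omega )}}$ is exactly the weighted mixed Lebesgue norm of $V_\chi f$ against $\omega$. The whole argument rests on three standard identities for $V_\chi$: the change-of-window inequality $|V_\chi f|\le C(|V_\psi f|*|V_\chi \gamma |)$, valid whenever $(\gamma ,\psi )\ne 0$; the orthogonality (Moyal) relation $(f,g)=c\,(V_\chi f,V_\chi g)_{L^2(\rr {2n})}$ with $c=\nm \chi {L^2}^{-2}$; and the inversion formula recovering $f$ from $V_\chi f$ by superposition of time-frequency shifts of $\gamma$. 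The hypothesis that $\omega$ is $v$-moderate is what transfers these pointwise and convolution statements into the weighted setting, through $\omega (x+y)\le C\omega (x)v(y)$.

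For (1) I would first establish the equivalence of norms and the window independence. Feeding the change-of-window inequality into Young's inequality for the weighted mixed norm gives $\nm f{M^{p,q}_{(\omega )},\chi }\le C\nm f{M^{p,q}_{(\omega )},\psi }$, where the constant is controlled by the $v$-weighted $L^1$ envelope of $V_\chi \gamma$; here the assumption $\chi \in M^1_{(v)}\setminus 0$ is precisely what makes this envelope finite. Completeness then follows because $V_\chi$ is, up to the equivalent norm, an isometry onto a closed subspace of the weighted mixed Lebesgue space, while the inversion formula identifies the limit of a Cauchy sequence as the STFT of a genuine tempered distribution.

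For (2) the inclusion $\mathscr S(\rr n)\hookrightarrow M^{p_1,q_1}_{(\omega _1)}(\rr n)$ is immediate, since $V_\chi f$ is rapidly decreasing for $f\in \mathscr S$ while $\omega _1$ grows at most polynomially. The embedding $M^{p_1,q_1}_{(\omega _1)}\hookrightarrow M^{p_2,q_2}_{(\omega _2)}$ for $p_1\le p_2$, $q_1\le q_2$ and $\omega _2\le C\omega _1$ is the delicate point and, I expect, the main obstacle: naively $L^{p_1}\not\subseteq L^{p_2}$ on $\rr n$ when $p_1\le p_2$, so the inclusion can only hold because $V_\chi f$ is not an arbitrary function. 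The resolution is the reproducing identity exhibiting $V_\chi f$ as a twisted convolution of itself with the Schwartz kernel $V_\chi \chi$, which makes $V_\chi f$ essentially determined by its samples on a unit lattice; at the level of these samples the inclusions run in the favourable direction $\ell ^{p_1}\subseteq \ell ^{p_2}$, and undoing the discretization yields the embedding. Finally $M^{p_2,q_2}_{(\omega _2)}\hookrightarrow \mathscr S'(\rr n)$ follows by dualizing $\mathscr S\hookrightarrow M^{p_2',q_2'}_{(1/\omega _2)}$ through the pairing in (3).

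For (3) and (4) I would start from the Moyal relation, which for $f,g\in \mathscr S$ gives $(f,g)=c\,(V_\chi f,V_\chi g)_{L^2}$; Hölder's inequality for mixed norms then bounds $\abp {(f,g)}$ by $C\nm f{M^{p,q}_{(\omega )}}\nm g{M^{p',q'}_{(1/\omega )}}$, and this extends the pairing to all of $M^{p,q}_{(\omega )}\times M^{p',q'}_{(1/\omega )}$. The reverse estimate $\nm f{M^{p,q}_{(\omega )}}\le C\sup \abp {(f,b)}$, the duality identification, and the density of $\mathscr S$ when $p,q<\infty$ all follow by transporting the corresponding facts for the weighted mixed Lebesgue space through $V_\chi$ and its inverse: compactly supported functions are dense there when $p,q<\infty$, the dual of $L^{p,q}$ is $L^{p',q'}$, and near-optimal test functions are produced by applying the inversion formula to suitable elements of the dual space. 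The weak-$*$ density of $\mathscr S$ in $M^\infty _{(\omega )}$ I would treat last: having identified $M^\infty _{(\omega )}$ with the dual of $M^1_{(1/\omega )}$, a subspace is weak-$*$ dense exactly when its annihilator in the predual is trivial, and any $g\in M^1_{(1/\omega )}$ annihilating $\mathscr S$ must vanish, since $\mathscr S$ is dense in $M^1_{(1/\omega )}$ and the pairing is the $L^2$ form.
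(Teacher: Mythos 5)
Your proposal is correct in outline, but note that the paper contains no proof of Proposition \ref{p1.4} to compare against: the proof is explicitly omitted, with references to Feichtinger, Feichtinger--Gr{\"o}chenig and Gr{\"o}chenig's book, and your sketch is essentially a reconstruction of the standard argument in exactly those sources --- the change-of-window inequality $|V_\chi f|\le C\,(|V_\psi f|\ast |V_\chi \gamma |)$ combined with weighted Young's inequality for (1), the reproducing identity plus discretization for the embedding in (2), and Moyal's formula plus the inversion formula for (3) and (4), including the correct weak$^*$ treatment of $M^{\infty}_{(\omega )}$ as the dual of $M^1_{(1/\omega )}$. Two refinements are worth recording. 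First, in (2) the lattice/amalgam detour can be avoided: the reproducing formula gives the pointwise bound $|V_\chi f|\le C\,(|V_\chi f|\ast |V_\chi \chi |)$, and since $V_\chi \chi$ is rapidly decreasing it lies in every weighted mixed space $L^{r,s}_{(v)}$, so Young's inequality $L^{p_1,q_1}\ast L^{r,s}\subseteq L^{p_2,q_2}$ with $1+1/p_2=1/p_1+1/r$, $1+1/q_2=1/q_1+1/s$ yields the inclusion directly; your sampling argument is equally valid but does more work than needed. Second, in (3) your extension step quietly relies on density, which fails when $p=\infty$ or $q=\infty$; the clean fix is to \emph{define} the extended pairing by the Moyal integral $c\int V_\chi f\,\overline{V_\chi g}$, which H{\"o}lder shows converges absolutely for all $f\in M^{p,q}_{(\omega )}$ and $g\in M^{p',q'}_{(1/\omega )}$, and to observe for the reverse estimate that the unit ball of $L^{p',q'}$ is norming for $L^{p,q}$ at all exponents, even at the endpoints where the Banach-dual identification in (4) breaks down. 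Neither point is a genuine gap --- both are handled in the cited literature --- but your sketch should make them explicit.
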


\par

Proposition \ref{p1.4}{\,}(1) allows us  be rather vague concerning
the choice of $\chi \in  M^1_{(v)}\setminus 0$ in
\eqref{modnorm}. For example, if $C>0$ is a constant and $\mathscr A$ is a
subset of $\mathscr S'$, then $\nm a{M^{p,q}_{(\omega )}}\le C$ for
every $a\in \mathscr A$, means that the inequality holds for some choice
of $\chi \in  M^1_{(v)}\setminus 0$ and every $a\in
\mathscr A$. Evidently, a similar inequality is true for any other choice
of $\chi \in  M^1_{(v)}\setminus 0$, with  a suitable constant, larger
than $C$ if necessary.

\par

It is also convenient to let $\mathcal M^{p,q}_{(\omega )}(\rr {n})$
be the completion of $\mathscr S(\rr n)$
under the norm $\nm \cdot {M^{p,q}_{(\omega )}}$. Then $\mathcal
M^{p,q}_{(\omega )}\subseteq M^{p,q}_{(\omega )}$ with equality if and
only if $p<\infty$ and $q<\infty$. It follows that most of the
properties which are valid for $M^{p,q}_{(\omega )}(\rr n)$, also hold
for $\mathcal M^{p,q}_{(\omega )}(\rr n)$.

\par

We also need to use multiplication properties of modulation
spaces. The proof of the following proposition is omitted since the
result can be found in \cite {Fe4, FG1, To2,To7}.

\par

\begin{prop}\label{multprop}
Assume that $p, p_j,q_j\in [1,\infty ]$ and $\omega _j, v\in \mathscr
P(\rr {2n})$ for $j=0,\dots ,N$ satisfy
$$
\frac 1{p_1}+\cdots +\frac 1{p_N}=\frac 1{p_0},\quad \frac
1{q_1}+\cdots +\frac 1{q_N}=N-1+\frac 1{q_0},
$$
and
$$
\omega _0(x,\xi _1+\cdots +\xi _N)\le C\omega _1(x,\xi _1)\cdots
\omega _N(x,\xi _N),\quad x,\xi _1,\dots \xi _N \in \rr n ,
$$
for some constant $C$. Then $(f_1,\dots ,f_N)\mapsto
f_1\cdots f_N$ from $\mathscr S(\rr n)\times \cdots \times \mathscr
S(\rr n)$ to $\mathscr S(\rr n)$ extends uniquely to a continuous map
from $M^{p_1,q_1}_{(\omega _1)}(\rr n)\times \cdots \times
M^{p_N,q_N}_{(\omega _N)}(\rr n)$ to $M^{p_0,q_0}_{(\omega _0)}(\rr
n)$, and
$$
\nm {f_1\cdots f_N}{M^{p_0,q_0}_{(\omega _0)}}\le C\nm
{f_1}{M^{p_1,q_1}_{(\omega _1)}}\cdots \nm
{f_N}{M^{p_N,q_N}_{(\omega _N)}}
$$
for some constant $C$ which is independent of $f_j\in
M^{p_j,q_j}_{(\omega _j)}(\rr n)$ for $i=1,\dots ,N$.

\par

Furthermore, if  $u_0=0$ when $p<\infty$, $v(x,\xi )=v(\xi )\in
\mathscr P(\rr n)$ is submultiplicative, $f\in M^{p ,1}_{(v)}(\rr n)$,
and $\phi ,\psi$ are entire funcitons on $\mathbf C$ with expansions
$$
\phi (z)=\sum _{k=0}^\infty u_kz^k,\quad \psi (z)=\sum _{k=0}^\infty
|u_k|z^k,
$$
then $\phi (f)\in M^{p ,1}_{(v)}(\rr n)$, and
$$
\nm {\phi (f)}{M^{p ,1}_{(v)}}\le C\, \psi (C\nm f{M^{p,1}_{(v)}}),
$$
for some constant $C$ which is independent of $f\in M^{p,1}_{(v)}(\rr
n)$
\end{prop}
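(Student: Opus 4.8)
The plan is to reduce everything to the short-time Fourier transform $V_\chi f(x,\xi)=\mathscr F(f\,\tau_x\chi)(\xi)$ and to the elementary fact that it turns pointwise products into convolutions in the frequency variable. First I would fix a Gaussian $\chi_0\in\mathscr S(\rr n)\setminus 0$ and put $\chi_j=\chi_0$, $\chi=\chi_0^N$; since a Gaussian lies in $M^1_{(v)}$ for every $v\in\mathscr P(\rr{2n})$, Proposition~\ref{p1.4}{\,}(1) lets me compute all the norms in the statement with these windows, up to equivalence. The key identity, coming from $\tau_x\chi=\prod_j\tau_x\chi_j$ together with the fact that $\mathscr F$ sends products to normalized convolutions, is
\[
V_\chi(f_1\cdots f_N)(x,\cdot)=c_N\,V_{\chi_1}f_1(x,\cdot)*\cdots*V_{\chi_N}f_N(x,\cdot),
\]
for a dimensional constant $c_N$, the convolution taken in the frequency variable at fixed $x$. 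Inserting the weight hypothesis $\omega_0(x,\xi_1+\cdots+\xi_N)\le C\prod_j\omega_j(x,\xi_j)$ pointwise under the convolution integral, I reduce to estimating the $L^{q_0}_\xi(L^{p_0}_x)$ norm of a convolution of the nonnegative functions $F_j(x,\xi)=|V_{\chi_j}f_j(x,\xi)\omega_j(x,\xi)|$.

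The estimate is then a two-step chain. Carrying the $L^{p_0}_x$ norm inside the convolution integral by Minkowski's inequality and applying H\"older's inequality in $x$ with $1/p_1+\cdots+1/p_N=1/p_0$ bounds $\|(F_1*\cdots*F_N)(\cdot,\xi)\|_{L^{p_0}_x}$ by $(G_1*\cdots*G_N)(\xi)$, where $G_j(\xi)=\|F_j(\cdot,\xi)\|_{L^{p_j}_x}$. Taking the $L^{q_0}_\xi$ norm and invoking Young's inequality for the $N$-fold convolution --- whose hypothesis is exactly $1/q_1+\cdots+1/q_N=(N-1)+1/q_0$ --- yields $\prod_j\|G_j\|_{L^{q_j}_\xi}=\prod_j\|f_j\|_{M^{p_j,q_j}_{(\omega_j)}}$, the claimed bound on $\mathscr S$. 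Since this inequality is continuous and $\mathscr S$ is dense, or weakly dense by Proposition~\ref{p1.4}{\,}(4), the map extends uniquely with the same constant. The only real care is the weight bookkeeping and the Minkowski/H\"older/Young ordering; the product-convolution identity is where the structure of the argument sits.

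For the second statement I would expand $\phi(f)=\sum_{k\ge0}u_kf^k$ and estimate each power by the first part. The crucial point --- and the place one must be careful --- is that $M^{p,1}_{(v)}$ is not a multiplicative algebra for finite $p$, so the powers cannot be controlled by iterating $M^{p,1}_{(v)}\cdot M^{p,1}_{(v)}$. Instead I use the first part twice: with all exponents $p_j=\infty$ and $q_j=1$ it shows $M^{\infty,1}_{(v)}$ is a Banach algebra (here the hypothesis that $v=v(\xi)$ is submultiplicative is exactly what reduces every weight condition to $v(\xi_1+\cdots+\xi_N)\le C\prod_j v(\xi_j)$), while with $p_1=p$, $p_2=\infty$, $q_1=q_2=1$ it shows $M^{p,1}_{(v)}$ is a module over $M^{\infty,1}_{(v)}$. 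Writing $f^k=f\cdot f^{k-1}$ and combining these with the embedding $\|f\|_{M^{\infty,1}_{(v)}}\le C\|f\|_{M^{p,1}_{(v)}}$ from Proposition~\ref{p1.4}{\,}(2) gives the geometric bound $\|f^k\|_{M^{p,1}_{(v)}}\le D^{k-1}\|f\|_{M^{p,1}_{(v)}}^k$ for a fixed constant $D$.

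Finally I would sum the series. With $A=\|f\|_{M^{p,1}_{(v)}}$ the bound $\|u_kf^k\|_{M^{p,1}_{(v)}}\le |u_k|D^{k-1}A^k$ shows $\sum_k u_kf^k$ converges absolutely in the Banach space $M^{p,1}_{(v)}$, the comparison series being $D^{-1}\sum_k|u_k|(DA)^k=D^{-1}\psi(DA)$, finite because $\phi$, hence $\psi$, is entire; and the sum agrees with the pointwise composition $\phi\circ f$ since $M^{p,1}_{(v)}\hookrightarrow M^{\infty,1}_{(v)}\hookrightarrow L^\infty$. The term $k=0$ is precisely where $u_0$ must vanish when $p<\infty$, since constants do not lie in $M^p$ for finite $p$; when $p=\infty$ that term is absorbed using monotonicity of $\psi$. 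Collecting constants gives $\|\phi(f)\|_{M^{p,1}_{(v)}}\le C\psi(C\|f\|_{M^{p,1}_{(v)}})$. I expect the exponent bookkeeping for the powers, rather than any analytic difficulty, to be the main obstacle.
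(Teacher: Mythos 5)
Your proof is correct, and it follows essentially the argument behind the paper's statement: the paper itself omits the proof, citing \cite{Fe4, FG1, To2, To7}, and the standard proof found there is exactly your route --- the window-factorization identity turning $V_\chi(f_1\cdots f_N)(x,\cdot)$ into an $N$-fold frequency convolution of the $V_{\chi_j}f_j(x,\cdot)$, followed by Minkowski, generalized H\"older in $x$ (using $\sum 1/p_j=1/p_0$) and Young in $\xi$ (using $\sum 1/q_j=N-1+1/q_0$), and for the entire-function part the power-series expansion with $f^k=f\cdot f^{k-1}$ controlled through the $M^{\infty,1}_{(v)}$ algebra and $M^{p,1}_{(v)}$ module structure so that the constants grow only geometrically in $k$. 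You also correctly isolate the two delicate points: that one cannot iterate $M^{p,1}_{(v)}\cdot M^{p,1}_{(v)}$ for finite $p$, and that $u_0=0$ is forced when $p<\infty$ because constants fail to lie in $M^{p,1}_{(v)}$.
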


\par

\begin{rem}\label{p1.7}
Assume that $p,q,q_1,q_2\in [1,\infty ]$, $\omega _1\in \mathscr P(\rr
n)$ and that $\omega, v \in \mathscr P(\rr {2n})$ are such that
$\omega$ is $v$-moderate. Then the following properties for modulation
spaces hold:
\begin{enumerate}
\item[{\rm{(1)}}]  if $q_1\le\min (p,p')$, $q_2\ge \max (p,p')$ and
$\omega (x,\xi )=\omega _1(x)$,
then $M^{p,q_1}_{(\omega )}\subseteq L^p_{(\omega _0)}\subseteq
M^{p,q_2}_{(\omega )}$. In particular,
$M^2_{(\omega )}=L^2_{(\omega _0)}$;

\vrum

\item[{\rm{(2)}}] if $\omega (x,\xi )=\omega _1(x)$, then
$M^{p,q}_{(\omega )}(\rr n)\hookrightarrow C(\rr n)$ if and only if
$q=1$;

\vrum

\item[{\rm{(3)}}] $M^{1,\infty}$ is a convolution algebra which
contains all measures on $\rr n$ with bounded mass;

\vrum

\item[{\rm{(4)}}] if $x_0\in \rr n$ and $\omega _0(\xi )=\omega
(x_0,\xi )$, then $M^{p,q}_{(\omega )}\cap \mathscr E' =\mathscr
FL^q_{(\omega _0)}\cap \mathscr E'$. Here $FL^q_{(\omega _0)}(\rr n)$
consists of all $f\in \mathscr S'(\rr n)$ such that
$$
\nm {\widehat f \, \omega _0}{L^q}<\infty .
$$
Furthermore, if $B$ is a ball
with radius $r$ and center at $x_0$, then
$$
C^{-1}\nm {\widehat f}{L^q_{(\omega _0)}}\le \nm
f{M^{p,q}_{(\omega )}}\le C\nm {\widehat f}{L^q_{(\omega
_0)}},\quad f\in \mathscr E'(B)
$$
for some constant $C$ which only depends on $r$, $n$, $\omega $ and
the chosen window functions;

\vrum

\item[{\rm{(5)}}] if $\omega (x,\xi )=\omega (\xi ,x)$, then
$M^{p}_{(\omega )}$ is invariant under the Fourier transform. A
similar fact holds for partial Fourier transforms;

\vrum

\item[{\rm{(6)}}] for each $x,\xi\in \rr n$ we have
$$
\Vert e^{i\scal\cdo \xi}f(\cdo -x)   \Vert_{M^{p,q}_{(\omega)}}\le C
v(x,\xi) \Vert f \Vert_{M^{p,q}_{(\omega)}},
$$
for some constant $C$;

\vrum

\item[{\rm{(7)}}] if $\tilde {\omega}(x,\xi)=\omega(x,-\xi)$ then
$f\in M^{p,q}_{(\omega)}$ if and only if $\bar f\in
M^{p,q}_{(\tilde\omega)}$.

\end{enumerate}
(See e.{\,}g. \cite {Fe2, Fe3, Fe4, FG1, FG2, FG4, Gc2, To7}.)
\end{rem}

\par

For future references we note that the constant $C_{r,n}$ is
independent of the center of the ball $B$ in (4) in Remark
\ref{p1.7}.

\par

In our investigations we need the following characterization of
modulation spaces.

\par

\begin{prop}\label{modspchar}
Let $\{ x_\alpha \} _{\alpha \in I}$ be a lattice in $\rr n$,
$B_\alpha =x_\alpha +B$ where $B\subseteq \rr n$ is an open ball, and
assume that $f_\alpha \in \mathscr E'(B_\alpha )$ for every $\alpha
\in I$. Also assume that $p,q\in [1,\infty ]$. Then the following is
true:
\begin{enumerate}
\item if 
\begin{equation}\label{fsum}
f = \sum _{\alpha \in I}f_\alpha \quad \text{and}
\quad
F(\xi ) \equiv \Big (\sum _{\alpha \in i}|\widehat f_\alpha (\xi
)\omega (x_\alpha ,\xi )|^p\Big )^{1/p} \in L^q(\rr n),
\end{equation}
then $f\in M^{p,q}_{(\omega )}$, and $f\mapsto \nm F{L^q}$ defines a
norm on $M^{p,q}_{(\omega )}$ which is equivalent to $\nm \cdo
{M^{p,q}_{(\omega )}}$ in \eqref{modnorm};

\vrum

\item if in addition $\cup _\alpha B_\alpha =\rr n$, $\chi \in C^\infty
_0(B)$ satisfies $\sum _\alpha \chi (\cdo -x_\alpha )=1$, $f\in
M^{p,q}_{(\omega )}(\rr n)$, and $f_\alpha =f\, \chi (\cdo -x_\alpha
)$, then $f_\alpha \in \mathscr E'(B_\alpha )$ and \eqref{fsum} is
fulfilled.
\end{enumerate}
\end{prop}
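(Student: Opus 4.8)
The plan is to deduce this proposition from the already-established window-independence of the modulation norm (Proposition \ref{p1.4}\,(1)) together with the local characterization of modulation spaces via the Fourier transform on compactly supported pieces (Remark \ref{p1.7}\,(4)). The statement has two parts: part (1) is a ``synthesis'' result saying that any sum of compactly supported pieces whose Fourier-side data $F(\xi)$ lies in $L^q$ gives a genuine element of $M^{p,q}_{(\omega)}$ with equivalent norm, and part (2) is the converse ``analysis'' result, obtained by using a fixed partition of unity subordinate to the cover $\{B_\alpha\}$.

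For part (2), I would start from a partition of unity $\sum_\alpha \chi(\cdo - x_\alpha)=1$ with $\chi\in C^\infty_0(B)$, set $f_\alpha = f\,\chi(\cdo-x_\alpha)$, and note immediately that $\supp f_\alpha \subseteq B_\alpha$, so $f_\alpha\in\mathscr E'(B_\alpha)$. The task is to show $F\in L^q$ with $\nm F{L^q}$ comparable to $\nm f{M^{p,q}_{(\omega)}}$. The key observation is that since each $f_\alpha$ is supported in a fixed translate of $B$, Remark \ref{p1.7}\,(4) gives the two-sided estimate $C^{-1}\nm{\widehat{f_\alpha}}{L^q_{(\omega_{x_\alpha})}}\le \nm{f_\alpha}{M^{p,q}_{(\omega)}}\le C\nm{\widehat{f_\alpha}}{L^q_{(\omega_{x_\alpha})}}$ \emph{with a constant independent of the center} $x_\alpha$, as emphasized in the remark following Remark \ref{p1.7}. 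The quantity $\nm F{L^q}^q=\int\sum_\alpha |\widehat{f_\alpha}(\xi)\omega(x_\alpha,\xi)|^p\,d\xi$ read off from \eqref{fsum} is, after interchanging sum and integral, exactly $\sum_\alpha \nm{\widehat{f_\alpha}\,\omega(x_\alpha,\cdo)}{L^p}^{q}$-type data; the heart of the matter is to recognize this as an equivalent expression for the STFT integral defining $\nm f{M^{p,q}_{(\omega)}}$. Concretely, using $\chi$ itself (or its translates) as the window in \eqref{modnorm} and exploiting that $\mathscr F(f\,\tau_{x_\alpha}\chi)=\widehat{f_\alpha}$ up to a fixed translation/modulation, the double integral in the modulation norm decouples across the lattice cells, and moderateness of $\omega$ converts the pointwise weight $\omega(x,\xi)$ on a cell into $\omega(x_\alpha,\xi)$ up to the bounded factor $v(x-x_\alpha)$.

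For part (1), the reasoning runs in the reverse direction: given pieces $f_\alpha\in\mathscr E'(B_\alpha)$ with the stated $L^q$ finiteness of $F$, I would show the series $\sum_\alpha f_\alpha$ converges in $M^{p,q}_{(\omega)}$ (or at least in $\mathscr S'$) and that the limit $f$ satisfies $\nm f{M^{p,q}_{(\omega)}}\le C\nm F{L^q}$. Again I would compute $\nm f{M^{p,q}_{(\omega)}}$ using a window $\chi$ adapted to $B$, expand $\mathscr F(f\,\tau_x\chi)=\sum_\alpha \mathscr F(f_\alpha\,\tau_x\chi)$, and control the cross terms: because $f_\alpha$ is localized to $B_\alpha$, the window factor $\tau_x\chi$ only interacts with finitely many (indeed boundedly many, by the lattice and ball geometry) neighboring cells, so the sum over $\alpha$ is \emph{locally finite} with uniformly bounded overlap. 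An application of Minkowski's and Hölder's inequalities on the mixed $L^p(L^q)$ norm, combined with the uniform-center estimate from Remark \ref{p1.7}\,(4) and the $v$-moderateness of $\omega$, then yields the bound by $\nm F{L^q}$.

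The main obstacle I anticipate is the interplay between the STFT window in \eqref{modnorm} and the lattice localization: one must carefully justify that the continuous double integral over $(x,\xi)$ defining $\nm f{M^{p,q}_{(\omega)}}$ is equivalent to the discretized Fourier-side sum $\nm F{L^q}$, and this requires quantifying the \emph{bounded overlap} of the windowed pieces across cells and absorbing the weight mismatch $\omega(x,\xi)$ versus $\omega(x_\alpha,\xi)$ through the moderateness inequality \eqref{moderate}. Getting the constants to be uniform in the lattice index $\alpha$ is exactly what the remark after Remark \ref{p1.7} was inserted to guarantee, so I would lean on that uniformity as the technical linchpin, while the remaining estimates reduce to routine applications of Young's and Hölder's inequalities.
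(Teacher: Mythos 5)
Your treatment of part (1) follows essentially the same route as the paper: fix a window $\chi \in C_0^\infty$, expand $\mathscr F(f\, \tau _x\chi )=\sum _\alpha \mathscr F(f_\alpha \, \tau _x\chi )$, use the bounded overlap of the supports to pass from the $\ell ^1$-sum to the $\ell ^p$-sum, use $v$-moderateness to replace $\omega (x,\xi )$ by $\omega (x_\alpha ,\xi )$, and conclude with Minkowski's and Young's inequalities. The one ingredient you leave implicit is the convolution bound $|\mathscr F(f_\alpha \chi (\cdo -x))(\xi )\omega (x_\alpha ,\xi )|\le C\big (|\widehat f_\alpha \, \omega (x_\alpha ,\cdo )|*|\widehat \chi \, v(0,\cdo )|\big )(\xi )$ together with the fact that the $x$-integration is confined to a ball $B'_\alpha =x_\alpha +B'$; this is what produces the pointwise-in-$\xi$ estimate $H_{f,\omega ,p}(\xi )\le C\, \big (F*|\widehat \chi \, v(0,\cdo )|\big )(\xi )$ and hence $\nm f{M^{p,q}_{(\omega )}}\le C\nm F{L^q}$ for all $p,q$ simultaneously. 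Note that Remark \ref{p1.7}{\,}(4) plays no role in this direction, and invoking it per piece would force an interchange of the $\ell ^p_\alpha$ and $L^q_\xi$ norms which Minkowski permits only when $p\le q$.

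The genuine gap is in your part (2) (and therefore in the reverse inequality of the norm equivalence asserted in (1)). What is needed there is the sampling inequality bounding $\nm F{L^q}$ by $\nm f{M^{p,q}_{(\omega )}}$: since $f_\alpha =f\, \tau _{x_\alpha}\chi$, one has $\widehat f_\alpha (\xi )=\mathscr F(f\, \tau _{x_\alpha}\chi )(\xi )$, which is exactly the integrand of \eqref{modnorm} sampled at the lattice point $x=x_\alpha$, so the claim is that a discrete $\ell ^p_\alpha$-sum of point samples of the short-time Fourier transform is dominated by the continuous $L^p_x$-integral. Your sketch asserts that the double integral ``decouples across the lattice cells'', but bounded overlap and moderateness only work in the synthesis direction of part (1): a point value $|\mathscr F(f\, \tau _{x_\alpha}\chi )(\xi )|$ is not controlled by its local $L^p$-average in $x$ without first smoothing, i.{\,}e. without a change-of-window/reproducing-formula estimate of the type $|V_\chi f|\le C\, |V_\phi f|*|V_\chi \phi |$, which is precisely the content of the Wiener-amalgam characterization of modulation spaces. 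Likewise, combining the per-piece two-sided bound of Remark \ref{p1.7}{\,}(4) with a sum over $\alpha$ runs into the same one-sided Minkowski obstruction ($p\le q$ only) and additionally requires an estimate of the form $\sum _\alpha \nm {f\, \tau _{x_\alpha}\chi}{M^{p,q}_{(\omega )}}^p\le C\nm f{M^{p,q}_{(\omega )}}^p$ --- again an amalgam-type fact, not a routine one. The paper sidesteps all of this by proving only the inequality $\nm f{M^{p,q}_{(\omega )}}\le C\nm F{L^q}$ directly and quoting the general theory (\cite{GH1,Gc2}) for assertion (2); your proof should either do the same or supply the reproducing-formula argument explicitly, and labelling the remaining estimates ``routine applications of Young's and H{\"o}lder's inequalities'' is exactly where the argument fails to close.
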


\par

\begin{proof}
(1) Assume that $\chi \in C_0^\infty (\rr n)\setminus 0$ is
fixed. Since there is a bound of overlapping supports of $f_\alpha$,
we obtain
\begin{multline*}
|\mathscr F(f\chi (\cdo -x))(\xi )\omega (x,\xi )| \le \sum  |\mathscr
F(f_\alpha \chi (\cdo -x ))(\xi )\omega (x,\xi )|
\\[1ex]
\le C \Big (\sum |\mathscr F(f_\alpha \chi (\cdo -x ))(\xi
)\omega (x,\xi )|^p\Big )^{1/p}.
\end{multline*}
for some constant $C$. From the support properties of $\chi$, and the
fact that $\omega $ is $v$-moderate for some $v\in \mathscr P(\rr
{2n})$, it follows for some constant $C$ independent of $\alpha$ we
have
$$
|\mathscr F(f_\alpha \chi (\cdo -x))(\xi )\omega (x,\xi )| \le
C|\mathscr F(f_\alpha \chi (\cdo -x))(\xi )\omega (x_\alpha ,\xi )|.
$$
Hence, for some balls $B'$ and $B'_\alpha =x_\alpha +B'$, we get
\begin{multline*}
\Big (\int _{\rr n}|\mathscr F(f\chi (\cdo -x))(\xi )\omega (x,\xi )|^p \,
dx\Big )^{1/p}
\\[1ex]
\le C \Big ( \sum _\alpha \int _{B'_\alpha }|\mathscr F(f_\alpha \chi
(\cdo -x))(\xi )\omega (x_\alpha ,\xi )|^p \, dx\Big )^{1/p}
\\[1ex]
\le C \Big ( \sum _\alpha \int _{B'_\alpha }\big (|\widehat f_\alpha
\omega (x_\alpha ,\cdo )|*|\widehat  \chi v(0,\cdo )|(\xi )\big )^p \,
dx\Big )^{1/p}
\\[1ex]
\le C'' \Big ( \sum _\alpha \big (|\widehat f_\alpha \omega (x_\alpha
,\cdo )|*|\widehat  \chi v(0,\cdo )|(\xi )\big )^p \Big )^{1/p}
\le C'' F*|\widehat \chi v(0,\cdo )|(\xi ),
\end{multline*}
for some constants $C'$ and $C''$. Here we have used Minkowski's
inequality in the last inequality. By applying the $L^q$-norm and
using Young's inequality we get
$$
\nm f{M^{p,q}_{(\omega )}}\le C'' \nm {F*|\widehat \chi v(0,\cdo
)|}{L^q}\le C'' \nm F{L^q}\nm {\widehat \chi v(0,\cdo )}{L^1}.
$$
Since we have assumed that $F\in L^q$, it follows that $\nm
f{M^{p,q}_{(\omega )}}$ is finite. This proves (1).

\par

The assertion (2) follows immediately from the general theory of
modulation spaces. (See e.{\,}g. \cite {GH1, Gc2}.) The proof is
complete.
\end{proof}

\par

Next we discuss (complex) interpolation properties for modulation
spaces. Such properties were carefully investigated in
\cite {Fe4} for classical modulation spaces, and thereafter
extended in several directions in \cite{FG2}, where
interpolation properties for coorbit spaces were established. (See
also Subsection \ref{ssec1.2}.) As a consequence of \cite{FG2} we have
the following proposition.

\par

\begin{prop}\label{interpolmod}
Assume that $0<\theta <1$, $p_j,q_j\in [1,\infty ]$ and that $\omega
_j\in \mathscr P(\rr {2n})$ for $j=0,1,2$ satisfy
$$
\frac 1{p_0} = \frac {1-\theta}{p_1}+\frac {\theta}{p_2},\quad
\frac 1{q_0} = \frac {1-\theta}{q_1}+\frac {\theta}{q_2}\quad
\text{and}\quad \omega _0=\omega _1^{1-\theta }\omega _2^\theta .
$$
Then
$$
(\mathcal M^{p_1,q_1}_{(\omega _1)}(\rr n),\mathcal
M^{p_2,q_2}_{(\omega _2)}(\rr n))_{[\theta ]} = \mathcal
M^{p_0,q_0}_{(\omega _0)}(\rr n).
$$
\end{prop}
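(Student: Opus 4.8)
The plan is to realize each space $\mathcal M^{p,q}_{(\omega )}(\rr n)$ as a \emph{retract} of a weighted mixed-norm Lebesgue space via the short-time Fourier transform, and then to transport the classical interpolation result for such Lebesgue spaces through the retract. Throughout I fix a single window $\chi \in \mathscr S(\rr n)\setminus 0$ (say a Gaussian), which by Proposition \ref{p1.4}\,(1) is admissible for all three weights $\omega _0,\omega _1,\omega _2$ simultaneously, and I write $V_\chi f(x,\xi )=\mathscr F(f\tau _x\chi )(\xi )$ for the short-time Fourier transform. For $\omega \in \mathscr P(\rr {2n})$ let $L^{p,q}_{(\omega )}(\rr {2n})$ denote the mixed-norm space with norm $\big(\int _{\rr n}(\int _{\rr n}|F(x,\xi )\omega (x,\xi )|^p\,dx)^{q/p}\,d\xi \big)^{1/q}$, so that \eqref{modnorm} reads $\nm f{M^{p,q}_{(\omega )}}=\nm{V_\chi f}{L^{p,q}_{(\omega )}}$.

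First I would record the two operators furnishing the retract. The analysis map $S=V_\chi$ is, by definition of the norm, an isometry $\mathcal M^{p,q}_{(\omega )}(\rr n)\to L^{p,q}_{(\omega )}(\rr {2n})$ for every admissible triple $(p,q,\omega )$, and the synthesis (adjoint) map $R\colon F\mapsto (2\pi )^{-n/2}\iint F(x,\xi )\, e^{i\scal \cdo \xi}\chi (\cdo -x)\,dxd\xi $ is bounded $L^{p,q}_{(\omega )}(\rr {2n})\to \mathcal M^{p,q}_{(\omega )}(\rr n)$; this boundedness is essentially the content of the convolution and Young estimates used in the proof of Proposition \ref{modspchar}\,(1), with the moderateness of $\omega$ absorbing the window. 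The crucial point is that $S$ and $R$ are \emph{the same operators} regardless of $(p,q,\omega )$ (the weight enters only the source/target norms), and Moyal's identity gives $RS=c\,\operatorname{Id}$ on $\mathscr S$, hence on each $\mathcal M^{p,q}_{(\omega )}$ by density (Proposition \ref{p1.4}\,(4)). Thus each $\mathcal M^{p,q}_{(\omega )}$ is a retract of the corresponding $L^{p,q}_{(\omega )}$.

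Next I would invoke the interpolation theorem for retracts (see e.{\,}g. Bergh--L{\"o}fstr{\"o}m or Triebel): if a retract/coretraction pair $(S,R)$ is simultaneously bounded on both endpoint couples, then complex interpolation commutes with the retract. Applied here this yields
\begin{equation*}
(\mathcal M^{p_1,q_1}_{(\omega _1)},\mathcal M^{p_2,q_2}_{(\omega _2)})_{[\theta ]}=R\,\big( (L^{p_1,q_1}_{(\omega _1)},L^{p_2,q_2}_{(\omega _2)})_{[\theta ]}\big),
\end{equation*}
so it remains only to identify the interpolation space of the weighted mixed-norm couple. That identification is the classical Stein--Weiss/Calder{\'o}n result: with the stated convex-combination relations for $1/p_0$ and $1/q_0$ and the geometric-mean relation $\omega _0=\omega _1^{1-\theta}\omega _2^\theta$, one has $(L^{p_1,q_1}_{(\omega _1)},L^{p_2,q_2}_{(\omega _2)})_{[\theta ]}=L^{p_0,q_0}_{(\omega _0)}$. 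Feeding this back, $R(L^{p_0,q_0}_{(\omega _0)})$ is precisely $\mathcal M^{p_0,q_0}_{(\omega _0)}$ (since $RS=c\,\operatorname{Id}$ identifies the retract with the image), which is the assertion.

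The main obstacle I anticipate is endpoint bookkeeping rather than any deep new estimate: one must check that a single window $\chi$ and a single synthesis operator $R$ are simultaneously bounded across the whole scale, which requires the three weights to be moderate with respect to a common submultiplicative $v$ (so that $\chi \in M^1_{(v)}$ serves all of them), and one must keep the order of integration in $L^{p,q}_{(\omega )}$ fixed. Using $\mathcal M$ rather than $M$ is essential here: it guarantees that $\mathscr S(\rr n)$ is a dense subspace of each endpoint space (Proposition \ref{p1.4}\,(4)), so that the couple is admissible and the identity $RS=c\,\operatorname{Id}$ extends from $\mathscr S$ to the full spaces — a step that fails for $M^{p,q}_{(\omega )}$ when $p$ or $q$ equals $\infty$.
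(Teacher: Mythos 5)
Your overall route coincides with the paper's: the paper offers no written proof of Proposition \ref{interpolmod} beyond citing \cite{FG2}, and in the discussion preceding Proposition \ref{coorbintepol} it makes precisely your mechanism explicit --- the space is a retract of the weighted mixed-norm space $L^{p,q}_{(\omega )}$ via the short-time Fourier transform (Corollary 4.6 and Theorem 4.7 in \cite{FG1}), and one then feeds in complex interpolation of vector-valued/mixed-norm Lebesgue spaces (Theorems 4.4.1, 5.1.1 and the proof of Theorem 5.6.3 in \cite{BL}). For finite exponents, where $\mathcal M^{p,q}_{(\omega )}=M^{p,q}_{(\omega )}$, your argument is complete and is the intended one.

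There is, however, a genuine gap at the endpoint exponents, which the hypothesis $p_j,q_j\in [1,\infty ]$ permits and which is the only situation where writing $\mathcal M$ instead of $M$ has any content. The synthesis map $R$ is bounded from $L^{p,q}_{(\omega )}(\rr {2n})$ into $M^{p,q}_{(\omega )}(\rr n)$ for all exponents, but its range is \emph{not} contained in $\mathcal M^{p,q}_{(\omega )}(\rr n)$ when $p=\infty$ or $q=\infty$. Concretely, take $\omega =1$ and $F=V_\chi 1$, so that $|F(x,\xi )|=|\widehat \chi (\xi )|$ is independent of $x$ and $F\in L^{\infty ,q}$ for every $q$; then $RF=c\cdot 1$, and $1\notin \mathcal M^{\infty ,q}$, since for any $g\in \mathscr S(\rr n)$ one has $\sup _x|V_\chi (1-g)(x,\xi )|\ge |\widehat \chi (\xi )|$ (let $|x|\to \infty$), whence $\nm {1-g}{M^{\infty ,q}}\ge \nm {\widehat \chi}{L^q}>0$. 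Thus $(S,R)$ is a retract pair for the $M$-couple, not for the $\mathcal M$-couple, and your closing identification $R\big ((L^{p_1,q_1}_{(\omega _1)},L^{p_2,q_2}_{(\omega _2)})_{[\theta ]}\big )=\mathcal M^{p_0,q_0}_{(\omega _0)}$ breaks exactly there; your remark that passing to $\mathcal M$ "fixes" matters points the wrong way, and note that Proposition \ref{p1.4}\,(4) gives only \emph{weak} density of $\mathscr S$ in $M^\infty _{(\omega )}$. Two further endpoint items need care: the vector-valued Calder{\'o}n identity $(L^{p_1}(\mathscr B_1),L^{p_2}(\mathscr B_2))_{[\theta ]}=L^p(\mathscr B)$ is stated in \cite{BL} for finite exponents (the paper itself has to invoke "the proof of Theorem 5.6.3" to reach $p=\infty$), and since $A_0\cap A_1$ is always dense in $(A_0,A_1)_{[\theta ]}$ for the complex method, the interpolation space at an infinite endpoint can only be a closure-type space --- this density fact, combined with the multiplicative inequality $\nm f{M^{p_0,q_0}_{(\omega _0)}}\le \nm f{M^{p_1,q_1}_{(\omega _1)}}^{1-\theta}\nm f{M^{p_2,q_2}_{(\omega _2)}}^{\theta}$ (H{\"o}lder on the STFT, using $\omega _0=\omega _1^{1-\theta}\omega _2^{\theta}$) and a simultaneous Schwartz approximation of elements of the intersection, is the missing step that actually produces $\mathcal M^{p_0,q_0}_{(\omega _0)}$ in the conclusion.
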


\medspace

Next we recall some facts in Section 2 in \cite{To9} on narrow
convergence. For any $f\in \mathscr S'(\rr n)$, $\omega \in \mathscr
P(\rr {2n})$, $\chi \in \mathscr S(\rr n)$ and $p\in [1,\infty ]$, we
set
$$
H_{f,\omega ,p}(\xi )=\Big ( \int _{\rr n}\abp {\mathscr F(f\, \tau _x\chi
)(\xi )\omega (x,\xi )}^p\, dx\Big )^{1/p}.
$$

\par

\begin{defn}\label{p2.1}
Assume that $f,f_j\in M^{p,q}_{(\omega )}(\rr {m} )$,
$j=1,2,\dots \ $. Then $f_j$ is said to converge {\it narrowly} to $f$
(with respect to $p,q\in [1,\infty ]$, $\chi \in \mathscr S(\rr
m)\setminus 0$ and $\omega \in \mathscr P(\rr {2m})$), if the following
conditions are satisfied:
\begin{enumerate}
\item $f_j\to f$ in $\mathscr S'(\rr {n} )$ as $j$ turns to $\infty$;

\par

\item $H_{f_j,\omega ,p}(\xi )\to H_{f,\omega ,p}(\xi )$ in $L^q(\rr
n)$ as $j$ turns to $\infty$.
\end{enumerate}
\end{defn}

\par

\begin{rem}\label{p2.2}
Assume that $f,f_1,f_2,\dots \in \mathscr S'(\rr
n)$ satisfies (1) in Definition \ref{p2.1}, and assume that $\xi \in
\rr n$. Then it follows from Fatou's lemma that
$$
\liminf _{j\to \infty}H_{f_j,\omega ,p}(\xi )\ge H_{f,\omega ,p}(\xi
)\quad \text{and}\quad \liminf _{j\to \infty}\nm
{f_j}{M^{p,q}_{(\omega )}}\ge \nm {f}{M^{p,q}_{(\omega )}}.
$$
\end{rem}

\par

The following proposition is important to us later on. We omit the
proof since the result is a restatement of Proposition 2.3 in \cite
{To9}.

\par

\begin{prop}\label{p2.3}
Assume that $p,q\in [1,\infty ]$ such
that $q<\infty$, and that $\omega \in \mathscr P(\rr {2n})$. Then
$C_0^\infty (\rr n)$ is dense in $M^{p,q}_{(\omega )}(\rr n)$ with
respect to the narrow convergence.
\end{prop}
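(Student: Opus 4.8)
The plan is to reduce to the case $p=\infty$ and then to reach $C_0^\infty(\RR)$ by a spatial truncation followed by a mollification, glued together by a diagonal argument. First note that norm convergence implies narrow convergence: for fixed $\xi$ the map $g\mapsto H_{g,\omega,p}(\xi)$ is a seminorm, so the reversed triangle inequality gives $|H_{f_j,\omega,p}(\xi)-H_{f,\omega,p}(\xi)|\le H_{f_j-f,\omega,p}(\xi)$ and hence $\nm{H_{f_j,\omega,p}-H_{f,\omega,p}}{L^q}\le\nm{f_j-f}{M^{p,q}_{(\omega)}}$; together with Proposition \ref{p1.4}{\,}(2), which yields $f_j\to f$ in $\mathscr S'$, this proves the implication. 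Consequently, when $p<\infty$ the assertion follows from the norm density of $\mathscr S(\RR)$ in $M^{p,q}_{(\omega)}$ (Proposition \ref{p1.4}{\,}(4)) and the approximation of Schwartz functions by $C_0^\infty(\RR)$ functions in the topology of $\mathscr S$. It remains to treat $p=\infty$, which is exactly the regime where $\mathscr S$ is not norm dense and the weaker narrow convergence is indispensable.

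Next I would dispose of the mollification step for compactly supported data. Fix $\psi\in C_0^\infty(\RR)$ with $\int\psi=1$ and set $\psi_\varepsilon=\varepsilon^{-n}\psi(\cdot/\varepsilon)$. If $g\in M^{\infty,q}_{(\omega)}$ is supported in a fixed ball, then $g_\varepsilon:=g*\psi_\varepsilon\in C_0^\infty(\RR)$ and, for $\varepsilon\le 1$, all $g_\varepsilon$ are supported in one common ball. By Remark \ref{p1.7}{\,}(4) the $M^{\infty,q}_{(\omega)}$ norm of a distribution $h$ supported in that ball is equivalent to $\nm{\widehat h}{L^q_{(\omega_0)}}$, with a constant depending only on the radius and not on the center. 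Since $\widehat{g_\varepsilon}$ equals $\widehat g$ times $\widehat\psi(\varepsilon\,\cdot)$ up to a fixed constant, we have $\widehat{g_\varepsilon}\to\widehat g$ pointwise with $|\widehat{g_\varepsilon}|\le C\nm{\widehat\psi}{L^\infty}|\widehat g|$, so dominated convergence on the Fourier side gives $g_\varepsilon\to g$ in $M^{\infty,q}_{(\omega)}$, hence narrowly by the first step. The decisive point is that for compactly supported data the modulation norm is $p$-independent, so $p=\infty$ is handled just like the finite case.

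The heart of the matter is to approximate a general $f\in M^{\infty,q}_{(\omega)}$ narrowly by its spatial truncations $f^{(N)}:=\Phi_N f$, where $\Phi_N=\sum_{\alpha\in I_N}\eta(\cdot-x_\alpha)$, the family $\{\eta(\cdot-x_\alpha)\}$ is the smooth partition of unity of Proposition \ref{modspchar}{\,}(2), and $I_N\uparrow I$ exhausts the lattice; thus $0\le\Phi_N\le 1$, $\Phi_N\uparrow 1$, and each $f^{(N)}$ is compactly supported. That $f^{(N)}\to f$ in $\mathscr S'$ is clear, and Remark \ref{p2.2} already gives $\liminf_N H_{f^{(N)},\omega,\infty}(\xi)\ge H_{f,\omega,\infty}(\xi)$. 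The remaining, and genuinely delicate, task is to upgrade this to $H_{f^{(N)},\omega,\infty}\to H_{f,\omega,\infty}$ in $L^q$, which I would do by dominated convergence in $\xi$. For the majorant, the pointwise convolution estimate for the short-time Fourier transform of a product underlying Proposition \ref{multprop}, combined with the uniform bound $\sup_N\nm{\Phi_N}{M^{\infty,1}_{(v_0)}}<\infty$ for a weight $v_0=v_0(\xi)$ for which $\omega$ is moderate (this bound holds because the $\Phi_N$ are lattice sums of one fixed bump), and Young's inequality, yield an $L^q$ function $G$ with $H_{f^{(N)},\omega,\infty}\le G$ for all $N$. The main obstacle is then the pointwise a.e. convergence $H_{f^{(N)},\omega,\infty}(\xi)\to H_{f,\omega,\infty}(\xi)$, i.e. the convergence of the essential suprema over $x$ of $|\mathscr F(\Phi_N f\,\tau_x\chi)(\xi)\omega(x,\xi)|$ to those of $|\mathscr F(f\,\tau_x\chi)(\xi)\omega(x,\xi)|$: on every bounded set of $x$ one has $\Phi_N\equiv 1$ for large $N$, whence uniform convergence $\mathscr F(\Phi_N f\,\tau_x\chi)(\xi)\to\mathscr F(f\,\tau_x\chi)(\xi)$ from the rapid decay of $\tau_x\chi$ against the distant transition region of $\Phi_N$, while the contribution of large $x$ must be shown negligible using $G$. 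Precisely this interplay, namely that the supremum over \emph{all} $x$ is recovered in the limit although the tail $f(1-\Phi_N)$ need not vanish in norm, is what forces the use of narrow rather than norm convergence when $p=\infty$. Granting the a.e. convergence, Fatou supplies the lower bound and $G$ the domination, so $H_{f^{(N)},\omega,\infty}\to H_{f,\omega,\infty}$ in $L^q$.

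Finally I would combine the steps diagonally. The truncations satisfy $f^{(N)}\to f$ narrowly, and for each $N$ the compactly supported $f^{(N)}$ admits mollifications $f^{(N)}*\psi_\varepsilon\to f^{(N)}$ in $M^{\infty,q}_{(\omega)}$ as $\varepsilon\to 0$. Choosing $\varepsilon_N$ with $\nm{f^{(N)}*\psi_{\varepsilon_N}-f^{(N)}}{M^{\infty,q}_{(\omega)}}<1/N$ and putting $g_N:=f^{(N)}*\psi_{\varepsilon_N}\in C_0^\infty(\RR)$, the first-step estimate $\nm{H_{g_N,\omega,\infty}-H_{f^{(N)},\omega,\infty}}{L^q}\le\nm{g_N-f^{(N)}}{M^{\infty,q}_{(\omega)}}<1/N$ together with $H_{f^{(N)},\omega,\infty}\to H_{f,\omega,\infty}$ in $L^q$ gives $H_{g_N,\omega,\infty}\to H_{f,\omega,\infty}$ in $L^q$, while $g_N\to f$ in $\mathscr S'$ is immediate; hence $g_N\to f$ narrowly, which is the claim.
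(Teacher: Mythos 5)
Your preparatory steps are sound: the seminorm estimate showing that norm convergence implies narrow convergence, the reduction of the case $p<\infty$ to Proposition \ref{p1.4}\,(4), the mollification of compactly supported pieces via the center-uniform equivalence in Remark \ref{p1.7}\,(4) (with $q<\infty$ correctly used for dominated convergence on the Fourier side), and the diagonal gluing. But the step you yourself flag and then assume --- the a.e.\ convergence $H_{f^{(N)},\omega ,\infty}(\xi )\to H_{f,\omega ,\infty}(\xi )$ for the truncations $f^{(N)}=\Phi _Nf$ --- is not merely unproven: it is false, and with it the narrow convergence $f^{(N)}\to f$. The reason is structural: your cutoffs $\Phi _N$ have a transition layer of \emph{fixed} width, so the frequency spreading they create near the boundary sits at unit scale for every $N$, and the supremum over $x$ in $H_{\cdot ,\omega ,\infty}$ detects that layer for every $N$. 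Concretely, take $n=1$, $\omega =v=1$, $f\equiv 1\in M^{\infty ,q}$, and a window $\chi \in C_0^\infty \setminus 0$ with $\widehat \chi (\xi _0)=0$ for some real $\xi _0$ (e.g.\ a smoothed triangle function). Then $H_{f,1,\infty}(\xi )=|\widehat \chi (\xi )|$, while the right edge of $\Phi _N$ is, for every large $N$, an exact translate of the fixed ramp $\Theta =\sum _{\alpha \le 0}\eta (\cdo -x_\alpha )$; since $|\mathscr F(\Theta (\cdo -c)\tau _x\chi )(\xi )| = |\mathscr F(\Theta \, \tau _{x-c}\chi )(\xi )|$, the supremum over $x$ in the edge region gives $H_{\Phi _N,1,\infty}(\xi )\ge S(\xi )\equiv \sup _y|\mathscr F(\Theta \, \tau _y\chi )(\xi )|$ for all large $N$. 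Moreover $S(\xi _0)>0$: the function $y\mapsto \mathscr F(\Theta \tau _y\chi )(\xi _0)$ is the convolution $\big ((\Theta e^{-i\, \cdo \, \xi _0})*\chi (-\cdo )\big )(y)$, and if it vanished identically then, taking Fourier transforms in $y$ and writing $\widehat \Theta =c_0\delta +w$ with $w$ nonvanishing off a discrete set (because $\Theta '\in C_0^\infty$ and $\int \Theta '\neq 0$), one would force $\widehat \chi$ to vanish on a set of positive measure, impossible for $0\neq \chi \in \mathscr E'$ by Paley--Wiener. Hence $H_{\Phi _N,1,\infty}-H_{f,1,\infty}\ge S-|\widehat \chi |>0$ on a set of positive measure near $\xi _0$, uniformly in large $N$, so condition (2) of Definition \ref{p2.1} fails: plain spatial truncation does not converge narrowly when $p=\infty$. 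Your majorant $G$ is correctly constructed, but no dominated convergence argument can rescue a pointwise limit that is wrong; only the Fatou half (Remark \ref{p2.2}) survives, exactly as the one-sided inequality suggests.

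For comparison: the paper itself gives no argument here --- it delegates to Proposition 2.3 of \cite{To9} --- and the proof there avoids your obstruction by using \emph{flattening} regularizations rather than fixed-scale ones: approximants of the form $f_\ep =\psi (\ep \, \cdo )\, (f*\phi _\ep )$ with $\psi ,\phi \in C_0^\infty$, $\psi (0)=1$, $\int \phi \, dx=1$, which lie in $C_0^\infty$. Multiplication by $\psi (\ep \, \cdo )$ acts on $V_\chi f$ as convolution in the frequency variable with the $L^1$-normalized kernel $\ep ^{-n}\widehat \psi (\cdo /\ep )$, an approximate identity; combined with the corresponding effect of $f*\phi _\ep$, this dominates $H_{f_\ep ,\omega ,p}$ by expressions of the form $C\, H_{f,\omega ,p}*\ep ^{-n}|\widehat \psi (\cdo /\ep )v(\cdo )|$, and the convergence $H_{f_\ep ,\omega ,p}\to H_{f,\omega ,p}$ in $L^q$ then follows from the $L^q$-continuity of translations --- this is precisely where the hypothesis $q<\infty$ enters --- together with the lower bound of Remark \ref{p2.2}. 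The transition layer of $\psi (\ep \, \cdo )$ has slope $O(\ep )$, so the boundary-layer excess that destroys your truncations disappears in the limit. If you replace your partition-of-unity truncation by such dilated cutoffs, the rest of your architecture (norm-implies-narrow, the case $p<\infty$, and the diagonal argument) can be kept essentially verbatim.
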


\medspace

%%%%%
\subsection{Coorbit spaces of modulation space types}\label{ssec1.2}
%%%%%
Next we discuss a familly of Banach spaces of time-frequency type
which contains the modulation spaces. Certain types of these Banach
spaces are used as symbol classes for Fourier integral
operators which are considered in Subsection \ref{ssec2.5}. (Cf. the
introduction.)

\par

Assume that $V_j$ and $W_j$ for $j=1,\dots 4$ are vector spaces of
dimensions $n_j$ and $m_j$ respectively such that
\begin{equation}\label{dirsums}
V_1\oplus V_2=V_3\oplus V_4 =\rr n,\qquad W_1\oplus W_2=W_3\oplus W_4
=\rr m.
\end{equation}
We let the euclidean structure in $V_j$ and $W_j$ be inherited from
$\rr n$ and $\rr m$ respectively. For
conveniency we also use the notation $\overline V$ and $\mathsf p$ for
the quadruple $(V_1,\dots V_4)$ of the vector spaces $V_1,\dots ,V_4$
and the quadruple $(p,q,r,s)$ in $[1,\infty ]^4$, respectively. That
is
$$
\overline V = (V_1,\dots ,V_4),\quad \overline W = (W_1,\dots ,W_4),\quad
\text{and}\quad \mathsf p=(p,q,r,s),
$$
where $p,q,r,s\in [1,\infty ]$. We also let and $\omega \in \mathscr
P(\rr {2n})$, and we set
$$
L^{\mathsf p}(\overline V)\equiv L^s(V_4;L^r(V_3;L^q(V_2;L^p(V_1)))).
$$
Finally we let $L^{\mathsf p}_{(\omega )}(\overline V)$ be the Banach
space which consists of all $F\in L^1_{loc}(\rr {2n})$ such that
$F\omega \in L^{\mathsf p}(\overline V)$. This means that
$L^{\mathsf p}_{(\omega )}(\overline V)$ is the set of all $F\in
L^1_{loc}(\rr {2n})$ such that
$$
\nm F{L^{\mathsf p}_{(\omega )}(\overline V)}
\equiv \Big ( \int _{V_4} \Big (  \int _{V_3} \Big (
\int _{V_2} \Big (  \int _{V_1} |F(x,\xi )\omega
(x,\xi )|^{p}\, dx_1\Big )^{q/p}\, dx_2 \Big )^{r/q}\, d\xi _1 \Big
)^{s/r} \, d\xi _2\Big )^{1/s}
$$
is finite (with obvious modifications when one or more of $p,q,r,s$
are equal to infinity). Here $dx_1,dx_2,d\xi _1 ,d\xi _2$ denotes the
Lebesgue measure in $V_1,V_2,V_3, V_4$ respectively.

\par

Next let $\chi \in \mathscr S(\rr n)\setminus 0$. Then the
space $\boldsymbol \Theta _{(\omega )}^{\mathsf p}(\overline V)$
consists of all $f\in \mathscr S'(\rr n)$ such that $V_\chi f\in
L^{\mathsf p}_{(\omega )}(\overline V)$, i.{\,}e.
\begin{equation}\label{coorbnorm}
\nm f{\boldsymbol \Theta _{(\omega )}^{\mathsf p}(\overline V)}
\equiv \nm {V_\chi}{L^{\mathsf p}_{(\omega )}(\overline V)}<\infty .
\end{equation}

\par

We note that if $p=q$ and $r=s$, then $\boldsymbol \Theta _{(\omega
)}^{\mathsf p}(\overline V)$ agrees with the modulation space
$M^{p,r}_{(\omega )}$. On the other hand, if $p\neq q$ or $r\neq s$,
then $\boldsymbol \Theta _{(\omega )}^{\mathsf p}(\overline V)$ is
\emph{not} a modulation space. However it is still a \emph{coorbit
space}, a familly of Banach spaces which is introduced and briefly
investigated in \cite{FG1,FG2}. By Corollary 4.6 in \cite{FG1} it
follows that $\boldsymbol \Theta _{(\omega )}^{\mathsf p}(\overline
V)$ is homeomorphic to a retract of $L^{\mathsf p}_{(\omega
)}(\overline V)$, which implies that the interpolation
properties of $L^{\mathsf p}_{(\omega )}(\overline V)$ spaces carry
over to $\boldsymbol \Theta _{(\omega )}^{\mathsf p}(\overline V)$
spaces. (Cf. Theorem 4.7 in \cite {FG1}.) Furthermore, if $\omega$ is
the same in the involved spaces, it follows that $L^{\mathsf
p}_{(\omega )}(\overline V)$ has the same interpolation properties as
$L^{\mathsf p}(\overline V)$. From these observations together with
the fact that the proof of Theorem 5.6.3 in \cite{BL} shows that
\begin{multline*}
(L^{p_1}(\rr n;\mathscr B_1),L^{p_2}(\rr n;\mathscr B_2))_{[\theta ]}
= L^{p}(\rr n;\mathscr B),
\\[1ex]
\text{when} \quad p, p_1, p_2\in [1,\infty ], \mathscr B =(\mathscr
B_1,\mathscr B_2)_{[\theta ]}\quad  \text{and}\quad \frac 1p =\frac
{1-\theta}{p_1}+\frac \theta{p_2},
\end{multline*}
it follows that the following result is an immediate consequence of
Theorems 4.4.1 and 5.1.1 \cite{BL}. The second part is also a
consequence of Corollary 4.6 in \cite {FG1} and Theorem ?? in
\cite{LP}. Here we use the convention
$$
1/\mathsf p =(1/p,1/q,1/r,1/s)\quad \text{when}\quad \mathsf
p=(p,q,r,s).
$$

\par

\begin{prop}\label{coorbintepol}
Assume that $V_j\subseteq \rr n$ and $W_j\subseteq \rr m$ for
$j=1,\dots ,4$ be vector
spaces such that \eqref{dirsums} holds, $\mathsf p_j,\mathsf q_j\in
[1,\infty ]^4$ for $j=0,1,2$ satisfy
$$
\frac 1{\mathsf p_0}=\frac {1-\theta}{\mathsf p_1}+\frac
{\theta}{\mathsf p_2},\quad \frac 1{\mathsf q_0}=\frac
{1-\theta}{\mathsf q_1}+\frac {\theta}{\mathsf q_2},
$$
for some $\theta \in [0,1]$. Also assume that $\omega , \omega _j\in
\mathscr P(\rr {2nj})$ for $j=1,2$. Then the following is true:
\begin{enumerate}
\item  the complex interpolation space $(\boldsymbol \Theta _{(\omega
)}^{\mathsf p_1}(\overline V), \boldsymbol \Theta _{(\omega
)}^{\mathsf p_2}(\overline V))_{[\theta ]}$ is equal to $\boldsymbol
\Theta _{(\omega )}^{\mathsf p}(\overline V)$;

\vrum

\item if $T$ is a linear and continuous map from
$\boldsymbol \Theta _{(\omega _1)}^{\mathsf p_1}(\overline
V)+\boldsymbol \Theta _{(\omega _1)}^{\mathsf p_2}(\overline V)$ to
$\boldsymbol \Theta _{(\omega _2)}^{\mathsf q_1}(\overline
W)+\boldsymbol \Theta _{(\omega _2)}^{\mathsf q_2}(\overline W)$,
which restricts to a continuous mappings from $\boldsymbol \Theta
_{(\omega _1)}^{\mathsf p_j}(\overline V)$ to $\boldsymbol \Theta
_{(\omega _2)}^{\mathsf q_j}(\overline W)$ for $j=1,2$, then $T$
restricts to a continuous map from $\boldsymbol \Theta
_{(\omega _1)}^{\mathsf p_0}(\overline V)$ to $\boldsymbol \Theta
_{(\omega _2)}^{\mathsf q_0}(\overline W)$.
\end{enumerate}
\end{prop}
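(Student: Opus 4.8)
The plan is to reduce both assertions to the corresponding interpolation properties of the mixed-norm Lebesgue spaces $L^{\mathsf p}_{(\omega )}(\overline V)$, by exploiting the fact recalled before the statement that each coorbit space $\boldsymbol \Theta _{(\omega )}^{\mathsf p}(\overline V)$ is a retract of $L^{\mathsf p}_{(\omega )}(\overline V)$ (Corollary 4.6 in \cite{FG1}). Concretely, for a fixed window $\chi$ the short-time Fourier transform $V_\chi$ serves as the coretraction from $\boldsymbol \Theta _{(\omega )}^{\mathsf p}(\overline V)$ into $L^{\mathsf p}_{(\omega )}(\overline V)$, and there is a reconstruction (synthesis) operator $R$ mapping $L^{\mathsf p}_{(\omega )}(\overline V)$ back onto $\boldsymbol \Theta _{(\omega )}^{\mathsf p}(\overline V)$ with $R\circ V_\chi$ equal to the identity on the coorbit space. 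The decisive point is that both $V_\chi$ and $R$ are defined without reference to the exponent quadruple $\mathsf p$ and are bounded on $L^{\mathsf p}_{(\omega )}(\overline V)$ simultaneously for every $\mathsf p\in[1,\infty]^4$; hence the \emph{same} pair of operators witnesses the retract structure for the whole interpolation couple.

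For part (1) I would first compute the complex interpolation space of the mixed-norm spaces with the common weight $\omega$. Since $\omega$ is the same in $L^{\mathsf p_1}_{(\omega )}(\overline V)$ and $L^{\mathsf p_2}_{(\omega )}(\overline V)$, multiplication by $\omega$ is an isometric isomorphism onto the unweighted spaces $L^{\mathsf p_j}(\overline V)$ which is a morphism of the couple, so it commutes with the interpolation functor and reduces matters to identifying $(L^{\mathsf p_1}(\overline V),L^{\mathsf p_2}(\overline V))_{[\theta]}$. Writing $L^{\mathsf p}(\overline V)=L^s(V_4;L^r(V_3;L^q(V_2;L^p(V_1))))$ as a four-fold iterated vector-valued Lebesgue space, I would apply the vector-valued interpolation identity from the proof of Theorem 5.6.3 in \cite{BL} once in each of the four variables, peeling off the outermost integration each time. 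This yields $(L^{\mathsf p_1}(\overline V),L^{\mathsf p_2}(\overline V))_{[\theta]}=L^{\mathsf p_0}(\overline V)$ with $1/\mathsf p_0=(1-\theta)/\mathsf p_1+\theta/\mathsf p_2$, and reinserting the weight gives $(L^{\mathsf p_1}_{(\omega )},L^{\mathsf p_2}_{(\omega )})_{[\theta]}=L^{\mathsf p_0}_{(\omega )}$. The retract interpolation principle (Theorem 4.7 in \cite{FG1}, i.{\,}e. the general statement that complex interpolation commutes with retracts defined by parameter-independent operators) then transfers this identity to the coorbit level, giving the asserted equality of $(\boldsymbol \Theta _{(\omega )}^{\mathsf p_1}(\overline V),\boldsymbol \Theta _{(\omega )}^{\mathsf p_2}(\overline V))_{[\theta]}$ with $\boldsymbol \Theta _{(\omega )}^{\mathsf p_0}(\overline V)$.

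Part (2) is then the interpolation property for operators and follows from part (1) together with the fact that the complex method $(\cdot ,\cdot )_{[\theta]}$ is an exact interpolation functor of exponent $\theta$ (Theorem 4.4.1 in \cite{BL}; cf. also \cite{LP}). Applying part (1) separately to the domain couple, with the fixed weight $\omega _1$, and to the target couple, with the fixed weight $\omega _2$, identifies the intermediate spaces as $\boldsymbol \Theta _{(\omega _1)}^{\mathsf p_0}(\overline V)$ and $\boldsymbol \Theta _{(\omega _2)}^{\mathsf q_0}(\overline W)$; the functoriality of the complex method then guarantees that an operator $T$ which is bounded from $\boldsymbol \Theta _{(\omega _1)}^{\mathsf p_j}(\overline V)$ to $\boldsymbol \Theta _{(\omega _2)}^{\mathsf q_j}(\overline W)$ for $j=1,2$ restricts to a bounded operator between the corresponding $\theta$-spaces.

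The step I expect to require the most care is the identification $(L^{\mathsf p_1}(\overline V),L^{\mathsf p_2}(\overline V))_{[\theta]}=L^{\mathsf p_0}(\overline V)$: the iteration of the vector-valued formula must be organized so that at each stage the inner target is recognized as the complex interpolation space of the two inner spaces produced at the previous stage (this is exactly where the relation $\mathscr B=(\mathscr B_1,\mathscr B_2)_{[\theta]}$ is used), and one must check that all four intermediate vector-valued spaces are again of the required mixed-norm form with the exponents moving according to $1/\mathsf p_0=(1-\theta)/\mathsf p_1+\theta/\mathsf p_2$. The remaining subtlety is to verify that the retract operators $V_\chi$ and $R$ are bounded uniformly across the couple so that the retract interpolation principle genuinely applies; this is guaranteed by their independence of $\mathsf p$.
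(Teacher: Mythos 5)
Your proposal is correct and takes essentially the same route as the paper: the paper likewise deduces the proposition from the retract property of the coorbit spaces (Corollary 4.6 and Theorem 4.7 in \cite{FG1}), the reduction to unweighted mixed-norm spaces when the weight is common to both members of the couple, the iterated vector-valued interpolation identity extracted from the proof of Theorem 5.6.3 in \cite{BL}, and the exactness/functoriality of the complex method (Theorems 4.4.1 and 5.1.1 in \cite{BL}, together with \cite{LP} for part (2)). No essential difference in approach.
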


\par

The most of the properties for modulation spaces stated in Proposition
Proposition \ref{p1.4} and Remark \ref{p1.7} carry over to
$\boldsymbol \Theta _{(\omega )}^{\mathsf p}$ spaces. For example the
analysis in \cite{Gc2} shows that the following result holds. Here we
use the convention
\begin{alignat*}{2}
\mathsf p_1&\le \mathsf p_2&\quad \text{when}  \quad \mathsf p_j &=
(p_j,q_j,r_j,s_j)\quad \text{and}\quad p_1\le p_2,\ q_1\le q_2,\
r_1\le r_2,\ s_1\le s_2,
\intertext{and}
t_1&\le \mathsf p\le t_2 &\quad \text{when} \quad \mathsf p &=
(p,q,r,s),\ t_1,t_2\in \mathbf R\cup \{ \infty \} \quad
\text{and}\quad t_1\le p,q,r,s \le t_2 .
\end{alignat*}

\par

\begin{prop}\label{p1.4AA}
Assume that $\mathsf p,\mathsf p_j\in [1,\infty ]$ for $j=1,2$, and
$\omega ,\omega _1,\omega _2,v\in \mathscr P(\rr {2n})$ are such that
$\omega$ is $v$-moderate and $\omega _2\le C\omega _1$ for some
constant $C>0$. Then the following are true:
\begin{enumerate}
\item[{\rm{(1)}}] if $\chi \in M^1_{(v)}(\rr n)\setminus 0$, then
$f\in \boldsymbol \Theta _{(\omega )}^{\mathsf p}(\overline V)$ if and
only if \eqref {coorbnorm} holds,
i.{\,}e. $\boldsymbol \Theta _{(\omega )}^{\mathsf p}(\overline V)$ is
independent of the choice of $\chi$. Moreover, $\boldsymbol \Theta
_{(\omega )}^{\mathsf p}(\overline V)$ is a Banach space under the
norm in \eqref{coorbnorm}, and different choices of $\chi$ give rise
to equivalent norms;

\vrum

\item[{\rm{(2)}}] if  $\mathsf p_1\le \mathsf p_2$ then
$$
\mathscr S(\rr n)\hookrightarrow \boldsymbol \Theta _{(\omega
_1)}^{\mathsf p_1}(\overline V)\hookrightarrow \boldsymbol \Theta
_{(\omega _2)}^{\mathsf p_2}(\overline V)\hookrightarrow \mathscr
S'(\rr n)\text .
$$
\end{enumerate}
\end{prop}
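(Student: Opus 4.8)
The plan is to mimic the standard modulation-space arguments, exploiting that $\boldsymbol \Theta _{(\omega )}^{\mathsf p}(\overline V)$ is, via the short-time Fourier transform $V_\chi$, a retract of the solid Banach space $L^{\mathsf p}_{(\omega )}(\overline V)$ (the retract property is the one recorded earlier through Corollary 4.6 in \cite{FG1}). Two analytic tools drive everything: the window-change estimate for $V_\chi$ and a weighted, mixed-norm Young inequality. I would treat (1) and (2) separately.

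For (1) the starting point is the pointwise inequality
$$
|V_{\chi _2}f(x,\xi )|\le \frac 1{|(\chi _1,\chi _2)|}\big (|V_{\chi _1}f|*|V_{\chi _2}\chi _1|\big )(x,\xi ),\qquad (x,\xi )\in \rr {2n},
$$
valid for any two windows $\chi _1,\chi _2$ with $(\chi _1,\chi _2)\neq 0$, where $*$ is the convolution on $\rr {2n}$. Since $\chi _1,\chi _2\in M^1_{(v)}\setminus 0$, the function $V_{\chi _2}\chi _1$ lies in the weighted space $L^1_{(v)}(\rr {2n})$. The key step is that convolution with an element of $L^1_{(v)}(\rr {2n})$ is bounded on $L^{\mathsf p}_{(\omega )}(\overline V)$ whenever $\omega$ is $v$-moderate: the moderateness estimate $\omega (z+w)\le C\,\omega (z)v(w)$ reduces this to the unweighted mixed-norm Young inequality, which in turn follows by iterating the scalar Young inequality through the four variables $V_1,\dots ,V_4$. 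Combining the two facts gives
$$
\nm {V_{\chi _2}f}{L^{\mathsf p}_{(\omega )}(\overline V)}\le C\,\nm {V_{\chi _2}\chi _1}{L^1_{(v)}}\,\nm {V_{\chi _1}f}{L^{\mathsf p}_{(\omega )}(\overline V)},
$$
and interchanging $\chi _1,\chi _2$ yields the reverse bound, proving both the window-independence and the equivalence of norms. For completeness one uses that $V_\chi$ is, up to normalisation, an isometry onto a closed subspace of $L^{\mathsf p}_{(\omega )}(\overline V)$: if $(f_k)$ is Cauchy then $(V_\chi f_k)$ converges in $L^{\mathsf p}_{(\omega )}(\overline V)$ to some $F$, the inversion formula forces $f_k\to f$ in $\mathscr S'$ for some $f$, and continuity of $V_\chi$ on $\mathscr S'$ identifies $V_\chi f=F$, so $f\in \boldsymbol \Theta _{(\omega )}^{\mathsf p}(\overline V)$.

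For (2) the two outer embeddings are the easy ones. If $f\in \mathscr S(\rr n)$ then $V_\chi f\in \mathscr S(\rr {2n})$ decays faster than any polynomial, while $\omega _1\in \mathscr P$ grows at most polynomially, so every iterated integral defining $\nm {f}{\boldsymbol \Theta _{(\omega _1)}^{\mathsf p_1}(\overline V)}$ converges; the embedding $\boldsymbol \Theta _{(\omega _2)}^{\mathsf p_2}(\overline V)\hookrightarrow \mathscr S'(\rr n)$ is the dual counterpart, obtained by pairing against $\mathscr S$ and using the polynomial lower bound on the weight. For the middle inclusion I would first use $\omega _2\le C\omega _1$ to pass to a common weight and then prove the monotonicity $\boldsymbol \Theta _{(\omega )}^{\mathsf p_1}(\overline V)\hookrightarrow \boldsymbol \Theta _{(\omega )}^{\mathsf p_2}(\overline V)$ for $\mathsf p_1\le \mathsf p_2$. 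Since one cannot invoke a global $L^{p_1}\subseteq L^{p_2}$ inclusion, I would discretise exactly as in Proposition \ref{modspchar}: sampling $V_\chi f$ on a lattice in $\rr {2n}$ produces a sequence whose discrete mixed $\ell ^{\mathsf p_1}$-norm is equivalent to $\nm {f}{\boldsymbol \Theta ^{\mathsf p_1}_{(\omega )}(\overline V)}$; one then applies the elementary sequence inclusion $\ell ^{\mathsf p_1}\subseteq \ell ^{\mathsf p_2}$, and the reproducing convolution identity $V_\chi f=\|\chi\|^{-2}\,V_\chi f*V_\chi\chi$ transports the discrete bound back to the continuous $\boldsymbol \Theta ^{\mathsf p_2}_{(\omega )}(\overline V)$-norm.

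The main obstacle is precisely this exponent-monotonicity in (2). Unlike the weighted Young inequality of (1), which is a pure convolution estimate, the inclusion $\mathsf p_1\le \mathsf p_2$ relies on the fact that $V_\chi f$ is not an arbitrary element of $L^{\mathsf p}_{(\omega )}(\overline V)$ but a smooth, band-limited (in the twisted-convolution sense) function reproduced against $V_\chi\chi$; this is what replaces the failed continuous inclusion by the valid discrete one. Bookkeeping the four exponents $(p,q,r,s)$ against the four factors $V_1,\dots ,V_4$ in the iterated integral must be kept consistent throughout, but all remaining verifications are the routine weighted mixed-norm analogues of the modulation-space arguments in \cite{Gc2}.
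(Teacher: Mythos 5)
Your proof is correct and follows the same route the paper intends: the paper offers no argument of its own for Proposition \ref{p1.4AA}, deferring instead to the analysis in \cite{Gc2}, and your three ingredients --- the window-change inequality $|V_{\chi _2}f|\le |(\chi _1,\chi _2)|^{-1}|V_{\chi _1}f|*|V_{\chi _2}\chi _1|$, the weighted mixed-norm Young estimate (valid since the iterated norm on $L^{\mathsf p}_{(\omega )}(\overline V)$ is translation invariant, so Minkowski's integral inequality applies slot by slot), and the lattice discretization through the reproducing formula to circumvent the failure of $L^{\mathsf p_1}\subseteq L^{\mathsf p_2}$ --- are exactly that standard machinery transported to the coorbit setting. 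The only detail worth inserting is the routine reduction to window pairs with $(\chi _1,\chi _2)\neq 0$, handled as in \cite{Gc2} by chaining both windows through a fixed auxiliary one (e.g.\ a Gaussian).
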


\par

Later on we also need the following observation.

\par

\begin{prop}\label{extops}
Assume that $(x,y)\in V_1\oplus V_2=\rr {n_0+n}$ with dual variables
$(\xi ,\eta )\in V_4\oplus V_3$, where $V_1=V_4=\rr {n_0}$ and
$V_2=V_3=\rr {n}$. Also assume that $f\in \mathscr S'(\rr {n})$,
$f_0\in \mathscr S'(\rr {n_0+n})$, $\omega \in \mathscr P(\rr {2n})$
and $\omega _0\in \mathscr P(\rr {2(n_0+n)})$  satisfy
$$
f_0(x,y)=f(y)\quad \text{(in $\mathscr S'(\rr {n_0+n})$)}\quad
\text{and} \quad \omega _0(x,y,\xi ,\eta ) = \omega (y,\eta )\eabs \xi
^t
$$
for some $t\in \mathbf R$, and that $p,q\in [1,\infty ]$. Then $f\in
M^{p,q}_{(\omega )}(\rr n)$ if and only if $f_0\in M^{\mathsf
p}_{(\omega _0)}(\rr {n_0+n})$ and $\mathsf p=(\infty ,p,q,1)$.
\end{prop}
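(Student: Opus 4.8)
The plan is to choose a tensor-product window and compute the short-time Fourier transform of $f_0$ explicitly; the special structure $f_0(x,y)=f(y)$ will force this transform to factor into a piece depending only on the dual variable $\xi \in V_4$ (through the window) and a piece that is exactly the short-time Fourier transform of $f$ in the variables $(y,\eta)$. By Proposition \ref{p1.4AA}\,(1) the space $\boldsymbol\Theta^{\mathsf p}_{(\omega_0)}(\overline V)$ (which is what $M^{\mathsf p}_{(\omega_0)}$ denotes for the $4$-tuple $\mathsf p=(\infty,p,q,1)$) is independent of the choice of window in $M^1_{(v_0)}\setminus 0$, and since Schwartz functions lie in every such space we may take $\chi_0=\chi_1\otimes\chi$ with $\chi_1\in\mathscr S(\rr{n_0})\setminus 0$ and $\chi\in\mathscr S(\rr n)\setminus 0$. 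First I would write out $V_{\chi_0}f_0=\mathscr F(f_0\,\tau_{(x,y)}\chi_0)$ and use $f_0=1\otimes f$ together with the tensor structure of the window to separate the $(x',\xi)$ pairing from the $(y',\eta)$ pairing.

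The key computation gives
\[
V_{\chi_0}f_0(x,y,\xi,\eta) = e^{-i\scal x\xi}\,\widehat{\chi_1}(\xi)\cdot V_\chi f(y,\eta),
\]
so that $|V_{\chi_0}f_0(x,y,\xi,\eta)|=|\widehat{\chi_1}(\xi)|\,|V_\chi f(y,\eta)|$ is \emph{independent} of the $V_1=\rr{n_0}$ variable $x$. Multiplying by $\omega_0(x,y,\xi,\eta)=\omega(y,\eta)\eabs\xi^t$ preserves this independence, since $\omega_0$ splits as $\eabs\xi^t$ times $\omega(y,\eta)$. Consequently, in the iterated norm \eqref{coorbnorm} with $\mathsf p=(\infty,p,q,1)$ — whose layers are $L^\infty$ over $V_1$ (the $x$-variable), then $L^p$ over $V_2$ (the $y$-variable), then $L^q$ over $V_3$ (the $\eta$-variable), and finally $L^1$ over $V_4$ (the $\xi$-variable) — the innermost $L^\infty(V_1)$ operation acts trivially on the $x$-independent integrand and merely reproduces it.

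After the trivial $L^\infty(V_1)$ step, the factor $|\widehat{\chi_1}(\xi)|\eabs\xi^t$ is constant in $y$ and $\eta$ and therefore pulls out of the $L^p(V_2)$ and $L^q(V_3)$ integrations, which then reduce precisely to the modulation-space norm $\nm f{M^{p,q}_{(\omega)}}$ of \eqref{modnorm}. The remaining outermost $L^1(V_4)$ integration contributes the scalar factor $C=\int_{\rr{n_0}}|\widehat{\chi_1}(\xi)|\eabs\xi^t\,d\xi$, which is finite because $\widehat{\chi_1}\in\mathscr S(\rr{n_0})$ decays faster than any polynomial while $\eabs\xi^t$ grows only polynomially, and which is strictly positive since $\chi_1\neq 0$. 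This yields the identity $\nm{f_0}{\boldsymbol\Theta^{\mathsf p}_{(\omega_0)}(\overline V)}=C\,\nm f{M^{p,q}_{(\omega)}}$ with $0<C<\infty$, from which both implications of the equivalence follow at once: one norm is finite exactly when the other is.

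The computation is essentially mechanical, so the only points requiring care are: first, the justification that $f_0=1\otimes f$ is a well-defined element of $\mathscr S'(\rr{n_0+n})$ and that the distributional pairing defining $V_{\chi_0}f_0$ genuinely factors as claimed, which follows from the tensor structure of the window together with the fact that $f_0$ pairs with $\tau_{(x,y)}\chi_0\,e^{-i\scal{\cdo}{(\xi,\eta)}}$ as a product; and second, the invocation of window independence, which is what frees us to use a tensor window in the first place. I expect the rigorous handling of the factorization at the level of tempered distributions (rather than merely for Schwartz $f$) to be the main technical obstacle, though it is routine given the product structure of both $f_0$ and $\chi_0$.
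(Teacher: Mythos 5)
Your proposal is correct and follows essentially the same route as the paper: the paper likewise takes a tensor window $\chi_0=\chi_1\otimes\chi$, obtains the factorization $|V_{\chi_0}f_0(x,y,\xi,\eta)\,\omega_0(x,y,\xi,\eta)| = |V_\chi f(y,\eta)\,\omega(y,\eta)|\,|\widehat{\chi_1}(\xi)\eabs{\xi}^t|$ (its equation \eqref{STFTrel3}), and concludes by applying the mixed Lebesgue norm, using the rapid decay of $|\widehat{\chi_1}(\xi)|\eabs{\xi}^t$. You merely make explicit what the paper leaves as ``straight-forward computations'' — the phase factor $e^{-i\scal{x}{\xi}}$, the $x$-independence that trivializes the inner $L^\infty(V_1)$ layer, the strict positivity and finiteness of $C=\int|\widehat{\chi_1}(\xi)|\eabs{\xi}^t\,d\xi$, and the appeal to window independence via Proposition \ref{p1.4AA}\,(1) — all of which is sound.
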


\par

\begin{proof}
Let $\chi _0=\chi _1\otimes \chi$, where $\chi _1\in \mathscr S(\rr
{n_0})$ and $\chi \in \mathscr S(\rr {n})$. By straight-forward
computations it follows that
\begin{equation}\label{STFTrel3}
|V_{\chi _0}f_0(x,y,\xi ,\eta )\omega _0(x,y,\xi ,\eta )| = |V_{\chi
}f(y,\eta )\omega (y,\eta )|\, |\widehat \chi _1(\xi )\eabs \xi ^t|.
\end{equation}
Since $|\widehat \chi _1(\xi )|\eabs \xi ^t$ is rapidly decreasing to
zero at infinity, the result follows by applying an appropriate mixed
Lebesgue norm on \eqref{STFTrel3}.
\end{proof}

\medspace

%%%%%
\subsection{Schatten-von Neumann classes and pseudo-differential
operators}\label{ssec1.3}
%%%%%
Next we recall some facts in Chapter XVIII in \cite {H} concerning
pseudo-differential operators. Assume that $a\in \mathscr 
S(\rr {2n})$, and that $t\in \mathbf R$ is fixed. Then the
pseudo-differential operator $a_t(x,D)$ in \eqref{e0.5} is a linear
and continuous operator on $\mathscr S(\rr n)$, as remarked in the
introduction. For general $a\in \mathscr S'(\rr {2n})$, the
pseudo-differential operator $a_t(x,D)$ is defined as the continuous
operator from $\mathscr S(\rr n)$ to $\mathscr S'(\rr n)$ with
distribution kernel
\begin{equation}\label{weylkernel}
K_{t,a}(x,y)=(2\pi )^{-n/2}(\mathscr F_2^{-1}a)((1-t)x+ty,y-x),
\end{equation}
Here $\mathscr F_2F$ is the partial
Fourier transform of $F(x,y)\in \mathscr S'(\rr{2n})$ with respect to
the $y$-variable. This definition makes sense, since
the mappings $\mathscr F_2$ and $F(x,y)\mapsto F((1-t)x+ty,y-x)$ are
homeomorphisms on $\mathscr S'(\rr {2n})$. Moreover, it agrees with
the operator in \eqref{e0.5} when $a\in \mathscr S(\rr {2m})$.

\par

Furthermore, for any $t\in \mathbf R$ fixed, it follows from the kernel
theorem by Schwartz that the map $a\mapsto
a_t(x,D)$ is bijective from $\mathscr S'(\rr {2n})$ to $\mathscr
L(\mathscr S(\rr n), \mathscr S'(\rr n))$ (see e.{\,}g. \cite {H}).

\par

In particular, if $a\in \mathscr S'(\rr {2m})$ and $s,t\in
\mathbf R$, then there is a unique $b\in \mathscr S'(\rr {2m})$ such
that $a_s(x,D)=b_t(x,D)$. By straight-forward applications of
Fourier's inversion  formula, it follows that
\begin{equation}\label{pseudorelation}
a_s(x,D)=b_t(x,D) \quad \Leftrightarrow \quad b(x,\xi )=e^{i(t-s)\scal
{D_x}{D_\xi}}a(x,\xi ).
\end{equation}
(Cf. Section 18.5 in \cite{H}.)

\medspace

Next we recall some facts on Schatten-von Neumann
operators and pseudo-differential operators (cf. the introduction).

\par

For each pairs of Hilbert spaces $\mathscr H_1$ and $\mathscr H_2$,
the set $\mathscr I_p(\mathscr H_1,\mathscr H_2)$ is a Banach space
which increases with $p\in [1,\infty ]$, and if $p<\infty$, then
$\mathscr I_p(\mathscr H_1,\mathscr H_2)$ is contained in
the set of compact operators. Furthermore, $\mathscr I_1(\mathscr
H_1,\mathscr H_2)$,
$\mathscr I_2(\mathscr H_1,\mathscr H_2)$ and $\mathscr I_\infty
(\mathscr H_1,\mathscr H_2)$ agree with the set of
trace-class operators, Hilbert-Schmidt operators and continuous
operators respectively, with the same norms.

\par

Next we discuss complex interpolation properties of Schatten-von
Neumann classes. Let $p,p_1,p_2\in [1,\infty ]$ and let $0\le \theta
\le 1$. Then similar \emph{complex} interpolation properties hold for
Schatten-von Neumann classes as for Lebesgue spaces, i.{\,}e. it holds
\begin{equation}\label{interpschatt}
\mathscr I_p =(\mathscr I_{p_1},\mathscr I_{p_2})_{[\theta ]},\quad
\text{when}\quad \frac 1p = \frac {1-\theta}{p_1}+\frac \theta {p_2}.
\end{equation}
(Cf. \cite {Si}.) Furthermore, by Theorem 2.c.6 in \cite{Kon} and its
proof, together with the remark which followed that theorem, it
follows that the \emph{real} interpolation property
\begin{equation}\label{realinterpschatt}
\mathscr I_p =(\mathscr I_{2},\mathscr I_{\infty })_{\theta ,p},\quad
\text{when}\quad \theta = 1-\frac 2p.
\end{equation}
We refer to
\cite{Si, To8} for a brief discussion of Schatten-von Neumann
operators.

\par

For any $t\in \mathbf R$ and $p\in [1,\infty ]$, let $s_{t,p}(\omega
_1,\omega _2)$ be the set of all $a\in \mathscr S'(\rr {2n})$ such
that $a_t(x,D)\in \mathscr I_p(M^2_{(\omega _1)},M^2_{(\omega
_2)})$. Also set
$$
\nm a{s_{t,p}}=\nm a{s_{t,p}(\omega _1,\omega _2)}\equiv \nm
{a_t(x,D)}{\mathscr I_p(M^2_{(\omega _1)},M^2_{(\omega _2)})}
$$
when $a_t(x,D)$ is continuous from $M^2_{(\omega _1)}$ to
$M^2_{(\omega _2)}$. By using the fact that $a\mapsto
a_t(x,D)$ is a bijective map from $\mathscr S'(\rr {2n})$ to $\mathscr
L(\mathscr S(\rr n),\mathscr S'(\rr n))$, it follows that the map
$a\mapsto a_t(x,D)$ restricts to an isometric bijection from
$s_{t,p}(\omega _1,\omega _2)$ to $\mathscr I_p(M^2_{(\omega
_1)},M^2_{(\omega _2)})$.

\par

Here and in what follows we let $p'\in [1,\infty ]$ denote the
conjugate exponent of $p\in [1,\infty ]$, i.{\,}e. $1/p+1/p'=1$.

\par

\begin{prop}\label{pseudomod}
Assume that $p,q_1,q_2\in [1,\infty ]$ are such that $q_1\le \min
(p,p')$ and $q_2\ge \max (p,p')$. Also assume that $\omega _1,\omega
_2\in \mathscr P(\rr {2n})$ and $\omega ,\omega _0\in \mathscr P(\rr
{4n})$ satisfy
$$
\frac{\omega _2(x-ty,\xi + (1-t)\eta )}{\omega _1(x+(1-t)y,\xi -t\eta
)}=\omega (x,\xi ,\eta ,y)
$$
and
$$
\omega_0(x,y,\xi,\eta)=\omega((1-t)x+ty, t\xi-(1-t)\eta,\xi+\eta,y-x).
$$
Then the following is true:
\begin{enumerate}
\item $M^{p,q_1}_{(\omega)}(\rr {2n})\subseteq
s_{t,p}(\omega_1,\omega_2)\subseteq M^{p,q_2}_{(\omega)}(\rr {2n})$;

\vrum

\item the operator kernel $K$ of $a_t(x,D)$ belongs to
$M^p_{(\omega_0)}(\rr {2n})$ if
and only if $a\in M^p_{(\omega)}(\rr {2n})$ and for some constant $C$,
which only depends on $t$ and the involved weight functions, it holds
$\Vert K \Vert_{M^p_{(\omega_0)}}=C \Vert a \Vert_{M^p_{(\omega)}}$
\end{enumerate}
\end{prop}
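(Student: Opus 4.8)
The plan is to read the kernel formula \eqref{weylkernel} as a composition of two operations under which $M^p=M^{p,p}$ transforms in a controlled way. Writing $\mathcal T(x,y)=((1-t)x+ty,\,y-x)$, a linear bijection of $\rr{2n}$ with $|\det \mathcal T|=1$, we have $K_{t,a}=(2\pi)^{-n/2}(\mathscr F_2^{-1}a)\circ \mathcal T$. The partial Fourier transform $\mathscr F_2^{-1}$ maps $M^p_{(\mu)}(\rr{2n})$ onto $M^p_{(\mu')}(\rr{2n})$, interchanging one space/frequency pair and transforming the weight accordingly (the partial Fourier statement in Remark \ref{p1.7}{\,}(5)); the substitution by $\mathcal T$ likewise preserves $M^p$, since with equal exponents $M^p$ is insensitive to invertible linear substitutions and $|\det \mathcal T|=1$ keeps the constant clean, replacing a weight $\mu(z,\zeta)$ by $\mu(\mathcal T z,(\mathcal T^{-1})^{t}\zeta)$. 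Composing these two weight substitutions and matching the outcome against the displayed formula for $\omega _0$ verifies that $a\in M^p_{(\omega)}$ if and only if $K_{t,a}\in M^p_{(\omega _0)}$; since each step is an isomorphism with constant depending only on $t$ and $n$, the identity $\Vert K\Vert_{M^p_{(\omega _0)}}=C\Vert a\Vert_{M^p_{(\omega)}}$ follows. The only real work here is the weight bookkeeping.

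\textbf{Part (1), the base exponent $p=2$.} I would anchor everything at the Hilbert--Schmidt exponent and spread out by interpolation and duality. At $p=2$ an operator $T:M^2_{(\omega _1)}\to M^2_{(\omega _2)}$ lies in $\mathscr I_2$ exactly when its distribution kernel lies in the Hilbert modulation space $M^2(\rr{2n})$ weighted by the tensor weight $\omega _2\otimes(1/\omega _1)$ (in phase-space coordinates), and the Hilbert--Schmidt norm equals that $M^2$-norm; this is the standard identification $\mathscr I_2(\mathscr H_1,\mathscr H_2)\cong \mathscr H_2\otimes \mathscr H_1^{*}$ specialized to Hilbert modulation spaces. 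One checks that this tensor weight is precisely the $\omega _0$ of part (2) --- note that the defining quotient for $\omega$ already carries $\omega _2$ in the numerator and $\omega _1$ in the denominator, matching $\mathscr H_2\otimes \mathscr H_1^{*}$. Combining with part (2) at $p=2$ gives $s_{t,2}(\omega _1,\omega _2)=M^2_{(\omega)}=M^{2,2}_{(\omega)}$ with proportional norms, which is the $p=2$ instance of both inclusions (here $\min(2,2')=\max(2,2')=2$).

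\textbf{Part (1), general $p$.} With the $p=2$ identity in hand I would use two endpoint facts at $p=\infty$: the (weighted) Sj\"ostrand--Boulkhemair continuity $M^{\infty ,1}_{(\omega)}\subseteq s_{t,\infty}(\omega _1,\omega _2)$, and its converse $s_{t,\infty}\subseteq M^{\infty ,\infty}_{(\omega)}$, obtained by testing $a_t(x,D)$ against time--frequency shifted windows so that $|V_\chi a|\,\omega$ at a point is dominated by the operator norm (a coherent-state lower bound). Complex interpolation of $a\mapsto a_t(x,D)$ between $M^{2,2}_{(\omega)}\to \mathscr I_2$ and $M^{\infty ,1}_{(\omega)}\to \mathscr I_\infty$, using Proposition \ref{interpolmod} on the symbol side and \eqref{interpschatt} on the operator side, yields $M^{p,p'}_{(\omega)}\subseteq s_{t,p}$ for $2\le p\le \infty$, where $p'=\min(p,p')$; interpolating the inverse map between $\mathscr I_2\to M^{2,2}_{(\omega)}$ and $\mathscr I_\infty\to M^{\infty ,\infty}_{(\omega)}$ gives $s_{t,p}\subseteq M^{p,p}_{(\omega)}$ with $p=\max(p,p')$. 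The range $1\le p\le 2$ is then reached by duality, using $\mathscr I_p^{*}=\mathscr I_{p'}$ together with $M^{p,q}_{(\omega)}{}^{*}=M^{p',q'}_{(1/\omega)}$ from Proposition \ref{p1.4}{\,}(3)--(4): the lower inclusion at $p\le 2$ is dual to the upper inclusion at $p'\ge 2$, and conversely. Finally the sharp exponents are relaxed to arbitrary $q_1\le \min(p,p')$ and $q_2\ge \max(p,p')$ by the embeddings $M^{p,q_1}\hookrightarrow M^{p,\min(p,p')}$ and $M^{p,\max(p,p')}\hookrightarrow M^{p,q_2}$ of Proposition \ref{p1.4}{\,}(2).

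\textbf{Main obstacle.} The hard part will be the weight accounting rather than the functional-analytic skeleton. One must verify that the affine substitutions carrying $t$ through \eqref{weylkernel} --- and, if convenient, the change of quantization \eqref{pseudorelation} used to normalize $t$ --- transform the tensor weight $\omega _2\otimes(1/\omega _1)$ into exactly the displayed $\omega$ and $\omega _0$, including the arguments $(1-t)x+ty$, $t\xi -(1-t)\eta$, $\xi +\eta$ and $y-x$. Getting these substitutions and their transposes precisely right, and checking that $v$-moderateness in $\mathscr P$ is preserved so that every modulation space in sight is well defined, is where the care lies; once the weights match, interpolation, duality and the embedding statements finish the argument.
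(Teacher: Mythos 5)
Your proposal is correct, but it is considerably more self-contained than the paper, whose treatment of part (1) is a citation: the paper simply declares (1) to be a restatement of Theorem 4.13 in \cite{Toft4} and only writes out an argument for part (2). For (2) the paper chooses matched windows $\chi ,\psi \in \mathscr S(\rr {2n})$ with $\psi (x,y)=\int \chi ((1-t)x+ty,\xi )e^{i\scal {y-x}\xi}\, d\xi$ and verifies the pointwise identity $|\mathscr F(K\tau _{(x-ty,\, x+(1-t)y)}\psi )(\xi +(1-t)\eta ,-\xi +t\eta )|=|\mathscr F(a\tau _{(x,\xi )}\chi )(y,\eta )|$, then applies the $L^p_{(\omega )}$ norm. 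Your factorization $K_{t,a}=(2\pi )^{-n/2}(\mathscr F_2^{-1}a)\circ \mathcal T$ is exactly the conceptual content of that computation --- the paper's $\psi$ is, up to a constant, the image of $\chi$ under the same two operations --- so, carried out with the transformed window, your route reproduces the paper's identity including the exact norm equality; note that if you invoke Remark \ref{p1.7}{\,}(5) and substitution invariance of $M^{p,p}$ only as black boxes (each with its own fixed window) you obtain norm \emph{equivalence}, and you must track the window through both steps to land the stated equality with a constant depending only on $t$ and the weights. For part (1), your skeleton --- the anchor $s_{t,2}(\omega _1,\omega _2)=M^2_{(\omega )}$ via the Hilbert--Schmidt kernel identification (and your check that the tensor weight $\omega _2(x,\xi )/\omega _1(y,-\eta )$ coincides with $\omega _0$ is indeed correct), the endpoints $M^{\infty ,1}_{(\omega )}\subseteq s_{t,\infty}$ and $s_{t,\infty}\subseteq M^{\infty ,\infty}_{(\omega )}$ by coherent-state testing, complex interpolation via Proposition \ref{interpolmod} and \eqref{interpschatt}, duality through Proposition \ref{p1.4}{\,}(3)--(4) for $1\le p\le 2$, and finally Proposition \ref{p1.4}{\,}(2) to relax the exponents to arbitrary $q_1\le \min (p,p')$, $q_2\ge \max (p,p')$ --- is precisely the architecture of the proof of the cited theorem in \cite{Toft4}; what the paper buys by citing is brevity, what you buy is an actual proof. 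Two caveats are worth recording: (a) Proposition \ref{interpolmod} interpolates the $\mathcal M$ spaces (closures of $\mathscr S$), which suffices here because $\mathcal M^{p,p'}=M^{p,p'}$ when $1<p<\infty$ while the endpoint cases are handled directly; (b) in the duality step the adjoint satisfies $a_t(x,D)^*=\overline a_{1-t}(x,D)$, so the trace pairing identifies the dual of $s_{t,p}(\omega _1,\omega _2)$ with an $s$-class of conjugate order only after the weight and quantization-parameter bookkeeping that you already flagged, correctly, as the main labor.
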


\par

\begin{proof}
The assertion (1) is a restatement of Theorem 4.13 in \cite{Toft4}. The
assertion (2) follows by similar arguments as in the proof of
Proposition 4.8 in \cite{Toft4}, which we recall here. Let  $\chi ,
\psi\in \mathscr S(\rr {2n})$ be such that
$$
\psi(x,y) = \int _{\rr n}\chi ((1-t)x+ty,\xi)e^{i\scal {y-x}\xi}\, d\xi.
$$
By applying the Fourier inversion formula it follows by
straightforward computations that
$$
|\mathscr F(K \tau _{(x-ty, x+(1-t)y)}\psi )(\xi +(1-t)\eta ,
-\xi +t\eta )|=|\mathscr F(a\tau _{(x,\xi)}\chi )(y,\eta )|.
$$
The result now follows by applying the $L^p_{(\omega)}$ norm on these
expressions.
\end{proof}

\par

We also need the following proposition on continuity of linear
operators with kernels in modulation spaces.

\par

\begin{prop}\label{fourop3A}
Assume that  $p\in [1,\infty]$, $\omega_j\in \mathscr{P}(\rr {2n_j})$,
for $j=1,2$, and $\omega\in \mathscr{P}(\rr {2n_1+2n_2})$ fulfill for
some positive constant $C$
\begin{equation}\label{weight66}
\frac{\omega_2(x,\xi)}{\omega_1(y,-\eta)}\le C \omega(x,y,\xi,\eta). 
\end{equation}
Assume moreover that  $K\in M^p_{(\omega)}(\rr {n_1+n_2})$ and $T$ is
the linear and continuous  map from $\mathscr S(\rr {n_1})$ to
$\mathscr S'(\rr {n_2})$ defined by
\begin{equation}\label{opkercond}
(Tf)(x)= \scal{K(x,\cdot)}f
\end{equation}
when $f\in \mathscr S(\rr {n_1})$. Then $T$ extends uniquely to a
continuous  map from $M^{p'}_{(\omega_1)}(\rr{n_1})$ to
$M^p_{(\omega_2)}(\rr{n_2})$.

\par

On the other hand, assume that $T$ is a linear continuous map from
$M^{1}_{(\omega_1)}(\rr{n_1})$ to $M^\infty _{(\omega_2)}(\rr{n_2})$,
and that equality is attained in \eqref{weight66}.
Then there is a unique kernel $K\in M^\infty _{(\omega )}(\rr
{n_1+n_2})$ such that \eqref{opkercond} holds for every 
\end{prop}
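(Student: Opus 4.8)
The plan is to reduce both assertions to a single identity that expresses the short-time Fourier transform of $Tf$ in terms of those of the kernel $K$ and of the input $f$. Fix windows $\chi _1\in \mathscr S(\rr {n_1})\setminus 0$ and $\chi _2\in \mathscr S(\rr {n_2})\setminus 0$ and set $\Phi =\chi _2\otimes \chi _1\in \mathscr S(\rr {n_1+n_2})\setminus 0$. Since $\Phi$ is an admissible (product) window, Proposition \ref{p1.4}{\,}(1) lets me compute every modulation space norm of $K$ with this particular $\Phi$. Inserting the short-time Fourier inversion formula for $f$ into \eqref{opkercond} and interchanging the order of integration, I expect an identity of the form
\begin{equation*}
V_{\chi _2}(Tf)(x,\xi )=c\iint V_\Phi K(x,y,\xi ,-\eta )\,V_{\chi _1}f(y,\eta )\,dy\,d\eta ,\qquad f\in \mathscr S(\rr {n_1}),
\end{equation*}
where $c$ depends only on the normalisation of $\chi _1$. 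All the estimates below rest on this formula, and the sign $-\eta$ appearing in the fourth slot of $V_\Phi K$ is precisely what must be matched against the $-\eta$ in \eqref{weight66}.

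For the first (direct) statement I would bound the weighted left-hand side. Applying \eqref{weight66} with $\eta$ replaced by $-\eta$ gives $\omega _2(x,\xi )\le C\,\omega (x,y,\xi ,-\eta )\,\omega _1(y,\eta )$, whence the identity yields
\begin{equation*}
|V_{\chi _2}(Tf)(x,\xi )\,\omega _2(x,\xi )|\le C'\iint G(x,y,\xi ,\eta )\,H(y,\eta )\,dy\,d\eta ,
\end{equation*}
with $G(x,y,\xi ,\eta )=|V_\Phi K(x,y,\xi ,-\eta )\,\omega (x,y,\xi ,-\eta )|$ and $H(y,\eta )=|V_{\chi _1}f(y,\eta )\,\omega _1(y,\eta )|$. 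Taking the $L^p$ norm in $(x,\xi )$ and using Minkowski's integral inequality followed by Hölder's inequality in $(y,\eta )$ (with $1/p+1/p'=1$) should give $\nm {Tf}{M^p_{(\omega _2)}}\le C'\nm G{L^p}\nm H{L^{p'}}=C''\nm K{M^p_{(\omega )}}\nm f{M^{p'}_{(\omega _1)}}$, the change of variables $\eta \mapsto -\eta $ turning $\nm G{L^p}$ into $\nm K{M^p_{(\omega )}}$. Continuity on all of $M^{p'}_{(\omega _1)}$ then follows by extending from the dense subspace $\mathscr S$ (Proposition \ref{p1.4}{\,}(4)) when $p>1$; the endpoint $p=1$ instead uses the weak-$*$/narrow density of $\mathscr S$ in $M^\infty _{(\omega _1)}$ (Proposition \ref{p2.3}) together with the uniform bound just obtained.

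For the converse, the restriction of $T$ to $\mathscr S(\rr {n_1})$ is continuous into $\mathscr S'(\rr {n_2})$, so the Schwartz kernel theorem (Subsection \ref{ssec1.3}) furnishes a \emph{unique} $K\in \mathscr S'(\rr {n_1+n_2})$ with $\scal {Tf}g=\scal K{g\otimes f}$; since $M^\infty _{(\omega )}\subseteq \mathscr S'$, this is the only candidate. The key observation is that, because $\Phi $ is a tensor product, every time-frequency shift of $\Phi $ factors as a tensor product of coherent states; hence $V_\Phi K(x,y,\xi ,\eta )$ equals, up to the constant $c$, the form $(Tf_{y,\eta },\overline {g_{x,\xi }})$, where $f_{y,\eta }$ and $g_{x,\xi }$ are the corresponding time-frequency translates of $\chi _1$ and $\chi _2$. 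Combining $\nm {Tf_{y,\eta }}{M^\infty _{(\omega _2)}}\le \nmm T\,\nm {f_{y,\eta }}{M^1_{(\omega _1)}}$, the duality bound of Proposition \ref{p1.4}{\,}(3) between $M^\infty _{(\omega _2)}$ and $M^1_{(1/\omega _2)}$, and the coherent-state estimates $\nm {f_{y,\eta }}{M^1_{(\omega _1)}}\le C\,\omega _1(y,-\eta )$ and $\nm {\overline {g_{x,\xi }}}{M^1_{(1/\omega _2)}}\le C/\omega _2(x,\xi )$, I obtain
\begin{equation*}
|V_\Phi K(x,y,\xi ,\eta )|\,\omega (x,y,\xi ,\eta )\le C\,\nmm T\,\frac {\omega _1(y,-\eta )}{\omega _2(x,\xi )}\,\omega (x,y,\xi ,\eta ).
\end{equation*}
Here the \emph{equality} in \eqref{weight66} is essential: it forces $\omega (x,y,\xi ,\eta )\,\omega _1(y,-\eta )/\omega _2(x,\xi )$ to be constant, so the right-hand side is bounded uniformly, whence $K\in M^\infty _{(\omega )}$ with $\nm K{M^\infty _{(\omega )}}\le C\nmm T$. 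With only the inequality \eqref{weight66} this quotient need not be bounded above, which is exactly why equality is imposed.

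The two steps I expect to be delicate are the following. First, the careful derivation of the fundamental identity, keeping track of phase factors and of the sign of $\eta $, since a wrong sign would break the matching with \eqref{weight66}. Second, the coherent-state norm estimates: the point is that translating and modulating $\chi _j$ costs only the sharp factor $\omega _1(y,-\eta )$, respectively $1/\omega _2(x,\xi )$ (not merely a power of a submultiplicative weight), and this requires a direct computation based on the moderateness of $\omega _1$ and $1/\omega _2$ together with the Schwartz decay of $V_{\chi _j}\chi _j$ (compare Remark \ref{p1.7}{\,}(6)). Everything else is a routine application of Minkowski's and Hölder's inequalities and of the density and duality statements in Proposition \ref{p1.4}.
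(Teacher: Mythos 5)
Your proof is correct, but it takes a genuinely different route from the paper's on both halves. For the direct statement the paper argues entirely on the duality side: by Proposition \ref{p1.4}{\,}(3) it suffices to bound $|(K,g\otimes \bar f)|$, and since the window may be taken as a tensor product the short-time Fourier transform of $g\otimes \bar f$ factorizes exactly, so \eqref{weight66}, rearranged as $1/\omega (x,y,\xi ,\eta )\le C\, \omega _1(y,-\eta )/\omega _2(x,\xi )$, gives at once $\nm {g\otimes \bar f}{M^{p'}_{(1/\omega )}}\le C\nm g{M^{p'}_{(1/\omega _2)}}\nm {\bar f}{M^{p'}_{(\omega _3)}}$ with $\omega _3(y,\eta )=\omega _1(y,-\eta )$, after which Remark \ref{p1.7}{\,}(7) removes the conjugate; this is a three-line argument that never touches the inversion formula or a Fubini interchange, and it treats all $p\in [1,\infty ]$ uniformly through the pairing. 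Your version works instead on the analysis side, deriving the representation of $V_{\chi _2}(Tf)$ as an integral of $V_\Phi K(x,y,\xi ,-\eta )$ against $V_{\chi _1}f(y,\eta )$ and then applying Minkowski's and H{\"o}lder's inequalities; this is heavier (you must justify inserting the Gabor inversion of $f$ into the kernel pairing and interchanging integrals, and police the sign and conjugation conventions you rightly flag), but it is constructive, exhibits where the $-\eta$ in \eqref{weight66} comes from, and the identity itself is useful beyond this proposition. For the converse the paper does not give a proof at all: it cites Proposition 4.7 in \cite{Toft4}. Your coherent-state argument --- reading $V_\Phi K(x,y,\xi ,\eta )$ as $T$ applied to a time-frequency shift of $\chi _1$ tested against a shift of $\chi _2$, with the sharp norms $\nm {f_{y,\eta }}{M^1_{(\omega _1)}}\le C\omega _1(y,-\eta )$ and $\nm {\overline {g_{x,\xi }}}{M^1_{(1/\omega _2)}}\le C/\omega _2(x,\xi )$ obtained from moderateness of $\omega _1$ and $1/\omega _2$ --- is precisely the standard proof of that Feichtinger--Gr{\"o}chenig-type kernel theorem, so your write-up makes this part self-contained where the paper outsources it; your explanation of why \emph{equality} in \eqref{weight66} is indispensable there is also correct.

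One citation in your extension argument should be repaired: at the endpoint $p=1$, so $p'=\infty$, Proposition \ref{p2.3} does not apply, because narrow density is stated only for $q<\infty$ while $M^{\infty }_{(\omega _1)}=M^{\infty ,\infty}_{(\omega _1)}$. Use instead the weak density of $\mathscr S$ in $M^{\infty}_{(\omega )}$ from Proposition \ref{p1.4}{\,}(4), or simply define the extension by the pairing $(Tf,g)=(K,g\otimes \bar f)$ for $g\in \mathscr S$ --- which is in effect what the paper's duality formulation does for every $p$ simultaneously, and which also settles the uniqueness of the extension in the weak sense at this endpoint.
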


\par

\begin{proof}
By Proposition \ref{p1.4} (3) and duality, it sufficies to prove that
for some constant $C$ independent of  $f\in \mathscr S(\rr {n_1})$ and
$g\in \mathscr S(\rr{n_2})$, it holds:
$$
|(K,g\otimes \bar f)|\le C\Vert K\Vert_{M^p_{(\omega)}}
\Vert g\Vert_{M^{p'}_{(1/\omega_2)}} \Vert f
\Vert_{M^{p'}_{(\omega_1)}}.
$$
Let $\omega_3(x,\xi)=\omega_1(x,-\xi)$. Then by straight-forward
calculation and using Remark \ref{p1.7} (7) we get
$$
\begin{array}{ll}
|(K,g\otimes \bar f)|&\le C_1\Vert K  \Vert_{M^{p}_{(\omega)}} \Vert
g\otimes \bar f \Vert_{M^{p'}_{(1/\omega)}} \le C_2 \Vert K
\Vert_{M^{p}_{(\omega)}} \Vert g  \Vert_{M^{p'}_{(1/\omega_2)}} \Vert
\bar f \Vert_{M^{p'}_{(\omega_3)}}
\\[1ex]
& \le C \Vert K  \Vert_{M^{p}_{(\omega)}}
\Vert g  \Vert_{M^{p'}_{(1/\omega_2)}} \Vert  f
\Vert_{M^{p'}_{(\omega_1)}}
\end{array}
$$

\par

The last part of the proposition concerning converse the property in
the case $p=\infty$ is a restatement of Proposition 4.7 in \cite
{Toft4} on generalization of Feichtinger-Gr{\"o}chenig's kernel
theorem.
\end{proof}

\par

%%%%%%%%%%%%%%%%%%%%%%%%%%%%%%%%%%%%%%%%%%%%%%%%%%%
\section{Continuity properties of Fourier
integral operators}\label{sec2}
%%%%%%%%%%%%%%%%%%%%%%%%%%%%%%%%%%%%%%%%%%%%%%%%%%%

\par

In this section we discuss Fourier integral operators with amplitudes
in modulation spaces, or more generally in certain types of coorbit
spaces. In Subsection \ref{ssec2.3} we extend Theorem 3.2 in
\cite{Bu0} to more general modulation spaces are involved.

\par

%%%%%
\subsection{Notation and general assumptions}\label{ssec2.1}
%%%%%
In the most general situation, we assume that the phase function
$\fy$ and the amplitude $a$ depend on $x\in \rr {n_1}$, $y\in \rr
{n_2}$ and $\zeta \in \rr m$, with dual variables $\xi \in \rr {n_1}$,
$\eta \in \rr {n_2}$ and $z \in \rr m$. For
conveniency we use the notation $N=n_1+n_2$, and $X, Y, Z,\dots$ for
tripples of the form $(x,y,\zeta )\in \rr {N+m}$, that is
\begin{equation}\label{trippl}
N=n_1+n_2,\qquad \text{and}\quad X=(x,y,\zeta )\in \rr {n_2}\oplus \rr
{n_1}\oplus \rr m \simeq \rr {N+m}.
\end{equation}
In order to state the results in Subsection \ref{ssec2.5} we also let
$F\in C^1(\rr {N+m})$, $V_1$ be a linear subspace of
$\rr {N+ m}$ of dimension $N$, $V_2=V_1^\bot$, and let $V_j'\simeq
V_j$ be the dual of $V_j$ for $j=1,2$. Also let any element
$X=(x,y,\zeta ) \in \rr {n_1+n_2+ m} = \rr {N+ m}$ and $(\xi ,\eta ,z )
\in  \rr {N+ m}$ in be written as
\begin{align*}
(x ,y ,\zeta ) &= \boldsymbol t _1e_1+\cdots +\boldsymbol t _{N}e_N +
\boldsymbol \varrho _{1}e_{N+1}\cdots +\boldsymbol \varrho _me_{N+m}
\\[1ex]
&\equiv  (\boldsymbol t,\boldsymbol \varrho ) = (\boldsymbol
t,\boldsymbol \varrho )_{V_1\oplus V_2}.
\intertext{and}
(\xi ,\eta ,z ) &= \boldsymbol \tau _1e_1+\cdots +\boldsymbol \tau
_{N}e_N + \boldsymbol u _{1}e_{N+1}\cdots +\boldsymbol u _me_{N+m}
\\[1ex]
&\equiv  (\boldsymbol \tau ,\boldsymbol u ) = (\boldsymbol \tau
,\boldsymbol u )_{V'_1\oplus V'_2}.
\end{align*}
for some orthonormal basis $e_1,\dots ,e_{N+m}$ in $\rr {N+m}$, and
let $F ' _{\boldsymbol \zeta}$ denotes the gradient of $F$ with
respect to the basis $e_{N+1},\dots ,e_{N+m}$.
Assume that $\mathsf d>0$, $\omega ,v\in \mathscr P(\rr {2(N+m)})$,
$\omega _j\in \mathscr P(\rr {2n_j})$ and $\fy \in C(\rr {N+m)}$ are
such that $v(X,\xi ,\eta ,z)=v(\xi ,\eta ,z)$ is submultiplicative,
$\fy '' \in M^{\infty ,1}_{(v)}(\rr {N+m})$ and that
\eqref{vvrel} and \eqref{weightsineq} hold.

\par

It is also convenient to let
\begin{align*}
\mathbf E_{a,\omega }(\boldsymbol t,\boldsymbol \varrho ,\boldsymbol
\tau ,\boldsymbol u) &= |V_\chi a(X,\xi ,\eta ,z)\omega (X,\xi ,\eta
,z)|,
\\[1ex]
{\mathcal E}_{a,\omega} (x,y,\boldsymbol u) &= \sup _{\zeta
,\boldsymbol \tau}\mathbf E_{a,\omega }(\boldsymbol t,\boldsymbol
\varrho ,\boldsymbol \tau ,\boldsymbol u),
\end{align*}
when $a\in \mathscr S'(\rr {N+m})$. Here $\boldsymbol u\in V_2'$ and
the supremum should be taken over $\zeta \in \rr m$ and $\boldsymbol
\tau \in V_1'$.

\par

Next we discuss general assumptions on the weight functions and phase
functions. In general we assume the phase function $\fy \in C(\rr
{N+m})$, and the weight functions $\omega ,v\in \mathscr P(\rr
{N+m}\times \rr {N+m})$, $\omega _0\in \mathscr P(\rr {2N})$ and
$\omega _1,\omega _2\in \mathscr P(\rr {2n_j})$,  are such that the
following conditions are fulfilled:
\begin{enumerate}
\item $v$ is submultiplicative and satisfies
\begin{equation}\label{vvrel}
\begin{aligned}
v(X,\xi ,\eta ,z)&=v(\xi ,\eta ,z),\qquad \text{and}
\\[1ex]
v(t\cdo ) &\le Cv \qquad X\in \rr {N+m},\ \xi \in \rr{n_2},\ \eta \in
\rr {n_1}\ z\in
\rr m,
\end{aligned}
\end{equation}
for some constant $C$ which is independent of $t\in [0,1]$.
In particular, $v(X,\xi ,\eta ,z)$ is constant with respect to $X\in
\rr {N+m}$;

\vrum

\item $\fy ^{(\alpha )}\in M^{\infty ,1}_{(v)}(\rr {N+m})$ for all
indices $\alpha$ such that $|\alpha |=2$ and
\begin{equation}\label{weightsineq}
\begin{aligned}
\frac {\omega _2(x,\xi )}{\omega _1(y,-\eta )} \le C_1\omega _0(x,y,\xi
,\eta ) &\le C_2\omega (X,\xi
-\fy '_x(X) , \eta -\fy '_y(X),-\fy '_\zeta (X)),
\\[1ex]
\omega (X,\xi _1+\xi _2,\eta _1+\eta _2,z_1+z_2) &\le C\omega (X,\xi
_1,\eta _1,z_1)v(\xi _2,\eta _2,z_2),
\\[1ex]
X = (x,y,\zeta )\in \rr {N+m},\quad \xi ,\, &\xi _1,\xi _2\in \rr
{n_2},\ \eta ,\,
\eta _1,\eta _2 \in \rr {n_1},\  z_1,z_2 \in \rr m,
\end{aligned}
\end{equation}
for some constants $C$, $C_1$ and $C_2$ which are independent of $X\in
\rr {N+m}$, $\xi _1,\xi _2\in \rr {n_1}$, $\eta _1,\eta _2\in \rr
{n_2}$ and $z_1,z_2\in \rr {m}$.
\end{enumerate}

\par

%%%%%
\subsection{The continuity assertions}\label{ssec2.2}
%%%%%
In most of our investigations we consider Fourier integral operator
$\op _\fy (a)$ where amplitude $a$ belongs to appropriate Banach space
which are defined in similar way as certain types of coorbit spaces in
Subsection \ref{ssec1.3}, and that the phase function $\fy$ should
satisfy the conditions in Subsection \ref{ssec2.1}. In this context we
find appropriate conditions on $a$, $\fy$ such that the conditions
(i)--(iii) below are fulfilled. Here the definition of admissible
pairs $(a,\fy )$ are presented in Subsection \ref{ssec2.5} below.
\begin{enumerate}
\item[{\rm{(i)}}] \emph{the pair $(a,\fy )$ is admissible, and the
kernel $K_{a,\fy}$ of $\op _\fy (a)$ belongs to $M^p_{(\omega _0)}$,
and
$$
\nm {K_{a,\fy}}{M^{p}_{(\omega _0)}} \le C{\mathsf d}^{-1}\exp (\nm
{\fy ''}{M^{\infty ,1}_{(v)}}) \nmm a ,
$$
for some constant $C$ which is independent of $a\in \mathscr S'(\rr
{N+m})$ and $\fy \in C(\rr {N+m})$;}

\vrum

\item[{\rm{(ii)}}] \emph{the definition of $\op _\fy (a)$ extends
uniquely to a continuous operator from $M^{p'}_{(\omega _1)}(\rr
{n_1})$ to $M^{p}_{(\omega _2)}(\rr {n_2})$. Furthermore, for some
constant $C$ it holds
$$
\nm {\op _\fy (a)}{M^{p'}_{(\omega _1)}\to M^{p}_{(\omega _2)}} \le
C{\mathsf d}^{-1}\exp (\nm {\fy ''}{M^{\infty ,1}_{(v)}}) \nmm a \,
\text ;
$$
}

\vrum

\item[{\rm{(iii)}}] \emph{if in addition $1\le p\le 2$, then $\op _\fy
(a)\in \mathscr I_p(M^2_{(\omega _1)},M^2_{(\omega _2)})$.}
\end{enumerate}

\par

%%%%%
\subsection{Reformulation of Fourier integral operators in terms of
short time Fourier transforms}\label{ssec2.3}
%%%%%
For each real-valued $\fy \in C(\rr {N+m})$ which satisfies $\fy
^{(\alpha )}\in M^{\infty ,1}_{(v)}(\rr {N+m})$ for all multi-indices
$\alpha$ such that $|\alpha |=2$, and $a\in \mathscr S(\rr {N+m})$, it
follows that the Fourier integral operator $f\mapsto \op _\fy (a)f$ in
\eqref{fintop} is well-defined and makes sense as a continuous
operator from $\mathscr S(\rr {n_1})$ to $\mathscr S'(\rr {n_2})$,
that is
$$
(\op _\fy (a)f,g)=(2\pi )^{-N/2}\int _{\rr {N+m}}a(X)e^{i\fy
(X)}f(y)\overline{g(x)}\, dX,
$$
when $f\in \mathscr S(\rr {n_1})$ and $g\in \mathscr S(\rr {n_2})$.
In order to extend the definition we reformulate the latter relation
in terms of short-time Fourier transforms.

\par

Assume that $0\le \chi ,\psi \in C_0^\infty (\rr {N+m})$ and $0\le
\chi _j\in C_0^\infty (\rr {n_j})$ for $j=1,2$ satisfy
$$
\nm {\chi _j} {L^1}=\nm {\chi }{L^2}=1,
$$
and let $X_1=(x_1,y_1,\zeta _1)\in \rr {N+m}$. By straight-forward
computations we get
\begin{multline*}
(2\pi )^{N/2}(\op _\fy (a)f,g) = \int _{\rr {N+m}}a(X)f(y)\overline
{g(x)} e^{i\fy (X )}\, dX
\\[1ex]
= \iint _{\rr {2(N+m)}} a(X+X_1)\chi (X_1)^2f(y+y_1)\chi
_1(y_1)\overline {g(x+x_1)\chi _2(x_1)} e^{i\psi (X_1)\fy (X+X_1 )}\,
dXdX_1
\end{multline*}
Then Parseval's formula gives
\begin{multline*}
(2\pi )^{N/2}(\opp _\fy (a)f,g)
\\[1ex]
=\iiiint _{\rr {2(N+m)}} F(X,\xi ,\eta ,\zeta _1) \mathscr F(f(y+\cdo
)\chi _1)(-\eta )\overline {\mathscr F(g(x+\cdo )\chi _2)(\xi )}\,
dXd\xi d\eta d\zeta _1
\\[1ex]
=\iiiint _{\rr {2(N+m)}} F(X,\xi ,\eta ,\zeta _1) (V_{\chi
_1}f)(y,-\eta )\overline {(V_{\chi _2}g)(x,\xi )}e^{-i(\scal
x\xi+\scal y{\eta})}\, dXd\xi d\eta d\zeta _1,
\\[1ex]
=\iiint _{\rr {2N+m}}\Big ( \int _{\rr m} F(X,\xi ,\eta ,\zeta _1)\,
d\zeta _1 \Big )(V_{\chi _1}f)(y,-\eta )\overline {(V_{\chi
_2}g)(x,\xi )}e^{-i(\scal x\xi+\scal y{\eta})}\, dXd\xi d\eta ,
\end{multline*}
where
\begin{equation*}
F(X,\xi ,\eta ,\zeta _1) = \mathscr F_{1,2}\big (e^{i\psi (\cdo ,\zeta
_1)\fy (X+(\cdo ,\zeta _1 ))}a(X+(\cdo ,\zeta _1))\chi (\cdo ,\zeta
_1)^2\big )(\xi ,\eta ).
\end{equation*}
Here $\mathscr F _{1,2}a$ denotes the partial Fourier transform
of $a(x,y,\zeta )$ with respect to the $x$ and $y$ variables.

\par

By Taylor's formula it follows that
$$
\psi (X_1) \fy (X+X_1) =\psi (X_1)\psi _{1,X}(X_1) +\psi _{2,X}(X_1),
$$
where
\begin{equation}\label{psidef}
\begin{aligned}
\psi _{1, X}( X_1) &= \fy ( X)+\scal {\fy '( X)}{ X_1}\quad \text{and}
\\[1ex]
\psi _{2, X}( X_1) &= \psi ( X_1)\int _0^1
(1-t)\langle \varphi ''( X+t X_1)
X_1, X_1\rangle \, dt .
\end{aligned}
\end{equation}
By inserting these expressions into the definition of $F(X,\xi ,\eta
,\zeta _1)$, and integrating with respect to the $\zeta _1$-variable
give
\begin{multline*}
\int _{\rr m}F(X,\xi ,\eta ,\zeta _1)\, d\zeta _1
\\[1ex]
= (2\pi )^{m/2}\mathscr F((e^{i\psi _{2,X}}\chi ) (a(\cdo +X)\chi
)(\xi -\fy '_x(X),\eta -\fy '_y(X),-\fy '_\zeta (X))
\\[1ex]
= (2\pi )^{N/2}\mathcal H_{a,\fy}(X,\xi ,\eta ),
\end{multline*}
where
\begin{equation}\label{Hdef}
\begin{aligned}
\mathcal H_{a,\fy}(X,\xi ,\eta )
= h_X*(\mathscr F(a(\cdo +&X)\chi ))(\xi -\fy '_x(X),\eta -\fy
'_y(X),-\fy '_\zeta (X)),
\\[1ex]
\text{and}\quad h_X &= (2\pi )^{-(N-m/2)} (\mathscr F(e^{i\psi
_{2,X}}\chi ))
\end{aligned}
\end{equation}

\par

Summing up we have proved that
\begin{equation}\label{fourrel}
\begin{aligned}
(&\operatorname{Op} _{\fy} (a)f,g) =T_{a,\fy}(f,g)
\\[1ex]
&\equiv \iiint  \mathcal
H_{a,\fy}(X,\xi ,\eta )
(V_{\chi _0}f)(y,-\eta )\overline {(V_{\chi _0}g)(x,\xi )}e^{-i(\scal
x\xi+\scal y{\eta})}\, dXd\xi d\eta .
\end{aligned}
\end{equation}

\par

%%%%%
\subsection{An extension of a result by Boulkhemair}\label{ssec2.4}
%%%%%
Next we consider Fourier integral operators with amplitudes in the
modulation space $M^{\infty ,1}_{(\omega )}(\rr {2n+m})$, where we are
able to state and prove the announced generalization of Theorem 3.2 in
\cite {Bu1}. Here we assume that $n_1=n_2=n$ which implies that
$N=2n$.

\par

\begin{thm}\label{boulkemA}
Assume that $1<p<\infty$, and that $\fy \in C(\rr
{2n+m})$, $\omega ,v\in \mathscr P(\rr {2(N+m)})$ and $\omega
_1,\omega _2\in \mathscr P(\rr {2n})$ fulfill the conditions in
Subsection \ref{ssec2.1}. Also assume that \eqref{detphicond} holds for some $\mathsf d >0$. Then the following is true:
\begin{enumerate}
\item the map $a\mapsto \op _\fy (a)$ from $\mathscr S(\rr {2n+m})$ to
$\mathscr L(\mathscr S(\rr n),\mathscr S'(\rr n))$ extends uniquely to
a continuous map from $M^{\infty ,1}_{(\omega )}(\rr {2n+m})$ to
$\mathscr L(\mathscr S(\rr n),\mathscr S'(\rr n))$;

\vrum

\item if $a\in M^{\infty ,1}_{(\omega )}(\rr {2n+m})$, then the map
$\op _{\fy }(a)$ from $\mathscr S(\rr n)$ to $\mathscr S'(\rr n)$ is
uniquely extendable to a continuous operator from $M^p_{(\omega
_1)}(\rr n)$ to $M^p_{(\omega _2)}(\rr n)$. Moreover, for some
constant $C$ it holds
\begin{equation}\label{normuppsk2}
\nm {\op _{\fy }(a)}{M^p_{(\omega _1)}\to M^p_{(\omega _2)}}\le
C\mathsf d ^{-1}\nm a{M^{\infty ,1}_{(\omega )}}\exp (C\nm {\fy
''}{M^{\infty 1,}_{(v)}}).
\end{equation}
\end{enumerate}
\end{thm}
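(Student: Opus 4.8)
The plan is to start from the short-time Fourier transform reformulation \eqref{fourrel}, which already reduces everything to an estimate for the bilinear form $T_{a,\fy}(f,g)$. First I would read $T_{a,\fy}$ as an integral operator acting between the STFT images $V_{\chi _0}f$ and $V_{\chi _0}g$: after carrying out the $\zeta$-integration in \eqref{fourrel} one is left with a kernel
$$
\mathcal K(x,\xi ,y,\eta )=\int _{\rr m}\mathcal H_{a,\fy}((x,y,\zeta ),\xi ,\eta )\, d\zeta
$$
which acts on the $(y,-\eta )$-slot of $V_{\chi _0}f$ and is paired against the $(x,\xi )$-slot of $V_{\chi _0}g$. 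Since $\nm f{M^p_{(\omega _1)}}$ is equivalent to $\nm {V_{\chi _0}f\cdot \omega _1}{L^p}$ by Proposition \ref{p1.4}{\,}(1), and similarly $\nm g{M^{p'}_{(1/\omega _2)}}\asymp \nm {V_{\chi _0}g/\omega _2}{L^{p'}}$, it suffices to show that the weighted kernel $\widetilde{\mathcal K}(x,\xi ,y,\eta )=\abp {\mathcal K(x,\xi ,y,\eta )}\,\omega _2(x,\xi )/\omega _1(y,-\eta )$ is bounded on $L^p$. The payoff of this formulation is that I may then invoke the Schur test: if both
$$
\sup _{x,\xi}\iint \widetilde{\mathcal K}(x,\xi ,y,\eta )\, dy\, d\eta
\quad \text{and}\quad
\sup _{y,\eta}\iint \widetilde{\mathcal K}(x,\xi ,y,\eta )\, dx\, d\xi
$$
are finite, then $\widetilde{\mathcal K}$ is bounded on $L^p$ for \emph{every} $p\in [1,\infty ]$ at once, with operator norm bounded by the larger of the two suprema. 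This is exactly what yields the whole range $1<p<\infty$ with no separate interpolation step.

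The two quantitative inputs are the uniform control of $h_X$ and the verification of the Schur sums. For the first I would use the composition part of Proposition \ref{multprop}. Since $h_X=(2\pi )^{-(N-m/2)}\mathscr F(e^{i\psi _{2,X}}\chi )$ with $\psi _{2,X}$ the $\fy ''$-weighted quadratic Taylor remainder in \eqref{psidef}, applying that estimate with $\phi (z)=e^{iz}$ (so that the majorant is $\psi (z)=e^z$) in the case $p=\infty$ — which is precisely the case allowing a nonzero constant term, as $e^{i\psi _{2,X}}$ has constant term $1$ — gives a bound on the relevant $M^{\infty ,1}_{(v)}$-norm of $e^{i\psi _{2,X}}$ of the form $\exp (C\nm {\fy ''}{M^{\infty ,1}_{(v)}})$. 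By \eqref{vvrel} the weight $v$ is independent of $X$ and satisfies $v(t\cdo )\le Cv$, so this bound is uniform in the base point $X$ and in the integration over $t\in [0,1]$; this is the source of the exponential factor in \eqref{normuppsk2}. For the Schur sums I would insert the weight relation \eqref{weightsineq}, which dominates $\omega _2(x,\xi )/\omega _1(y,-\eta )$ by $\omega (X,\xi -\fy '_x(X),\eta -\fy '_y(X),-\fy '_\zeta (X))$, precisely the point where the shifted frequencies of $\mathcal H_{a,\fy}$ are evaluated; the $v$-moderateness in the second line of \eqref{weightsineq} then absorbs the convolution shift coming from $h_X$ against the $v$-integrability just established. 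Since $\abp {\mathscr F(a(\cdo +X)\chi )}=\abp {V_\chi a(X,\cdo )}$, the $M^{\infty ,1}_{(\omega )}$ structure (supremum in the space variable, $L^1$ in the frequency variable) then bounds each Schur sum by $C\nm a{M^{\infty ,1}_{(\omega )}}\exp (C\nm {\fy ''}{M^{\infty ,1}_{(v)}})$.

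The step I expect to be the main obstacle is the factor $\mathsf d^{-1}$. It appears in exactly one of the two Schur sums, where one must convert an integration over (part of) $X=(x,y,\zeta )$ into the standard STFT integral for $a$ by changing variables from the base point to the gradient data $(\fy '_x,\fy '_y,\fy '_\zeta )$. The non-degeneracy hypothesis \eqref{detphicond} guarantees that the Jacobian of this gradient map is bounded below by $\mathsf d$, so the inverse change of variables contributes $\mathsf d^{-1}$. Making this change of variables legitimate under only the $M^{\infty ,1}_{(v)}$-regularity of $\fy ''$, rather than classical smoothness, is the delicate point; I would handle it through the window-localized, quasi-local nature of the STFT estimates together with the submultiplicativity of $v$, which keep all the relevant quantities finite on each Gabor cell and summable across cells.

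Finally I would assemble the conclusion. The Schur bound gives
$$
\abp {T_{a,\fy}(f,g)}\le C\mathsf d^{-1}\nm a{M^{\infty ,1}_{(\omega )}}\exp (C\nm {\fy ''}{M^{\infty ,1}_{(v)}})\, \nm f{M^p_{(\omega _1)}}\nm g{M^{p'}_{(1/\omega _2)}}
$$
for $f,g\in \mathscr S(\rr n)$; by the duality in Proposition \ref{p1.4}{\,}(3) this is equivalent to $\op _\fy (a)f\in M^p_{(\omega _2)}$ with the estimate \eqref{normuppsk2}, which is assertion (2). Assertion (1) follows since $\mathscr S(\rr n)\hookrightarrow M^p_{(\omega _1)}$ and $M^p_{(\omega _2)}\hookrightarrow \mathscr S'(\rr n)$ by Proposition \ref{p1.4}{\,}(2). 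To pass from $a\in \mathscr S$ to a general $a\in M^{\infty ,1}_{(\omega )}$, and to obtain uniqueness of the extension, I would approximate $a$ by $C_0^\infty$-amplitudes in the narrow topology, which is legitimate here because $q=1<\infty$ (Proposition \ref{p2.3}), and let the estimates pass to the limit, using that the bilinear form and the norm bound are stable under narrow convergence.
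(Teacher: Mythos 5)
Your overall architecture coincides with the paper's: the proof of Theorem \ref{boulkemA} given there also starts from the reformulation \eqref{fourrel}, dominates $\mathcal H_{a,\fy}$ by a convolution against $|V_\chi a(X,\cdo )|$ as in \eqref{Haest}, inserts \eqref{weightsineq}, and then performs exactly your Schur-test computation, in the equivalent form of a H{\"o}lder splitting into two factors $J_1$ and $J_2$, each handled by a change of variables trading part of $X$ for $(\fy '_y(X),\fy '_\zeta (X))$, respectively $(\fy '_x(X),\fy '_\zeta (X))$; the extension and uniqueness are then obtained by the same duality, density and narrow-convergence arguments you propose. So the Schur phrasing is not a different route. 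One inaccuracy in your anticipation: since the two Jacobian matrices above are transposes of each other, \eqref{detphicond} is invoked in \emph{both} Schur sums, contributing $\mathsf d^{-1/p}\cdot \mathsf d^{-1/p'}=\mathsf d^{-1}$; it does not appear in only one of them, and if it did you would not recover the $\mathsf d^{-1}$ in \eqref{normuppsk2}.

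The genuine gap is your replacement of Lemmas \ref{lemmafourop3} and \ref{lemmafourop33} by a direct appeal to Proposition \ref{multprop}. That proposition yields only a \emph{norm} bound, $\sup _X\nm {e^{i\psi _{2,X}}\chi}{M^{\infty ,1}_{(v)}}\le \exp (C\nm {\fy ''}{M^{\infty ,1}_{(v)}})$, i.{\,}e. (by Remark \ref{p1.7}{\,}(4), the support being fixed) $\sup _X\nm {h_X}{L^1_{(v)}}<\infty$ for the kernels $h_X$ in \eqref{Hdef} --- and even this is not immediate, since $\psi _{2,X}$ involves the dilated translates $\fy ''(X+t\, \cdo \, )$ averaged over $t\in [0,1]$, which is precisely what Lemma \ref{lemmafourop3} is designed to control. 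More seriously, after your change of variables the base point $X$ becomes a function $X(w)$ of the frequency variables over which the Schur sum integrates, so one faces integrals of the shape $\iint h_{X(w)}(u)\, v(u)\, \widetilde {\operatorname E}_{a,\omega}(w-u)\, du\, dw$, to which Young's inequality does not apply; a uniform $L^1_{(v)}$ bound on the family $(h_X)$ is genuinely insufficient, as the example $h_X(u)=\phi (u-c(X))$ with $\phi$ a fixed unit-mass bump and $c$ proper shows: the norms are constant while the integral diverges. What the argument requires is a single $X$-independent majorant $G\in L^1_{(v)}$ with $|h_X|\le G$ and $\nm G{L^1_{(v)}}\le C\exp (C\nm {\fy ''}{M^{\infty ,1}_{(v)}})$, and this pointwise domination is exactly what Lemma \ref{lemmafourop3} constructs (the supremum over the base point is taken \emph{inside}, through $H_{\infty ,f}$, before the Gaussian estimates) and what Lemma \ref{lemmafourop33} propagates through the exponential series to obtain $|\mathscr F(e^{i\fy _x})|\le (2\pi )^{n/2}\delta _0+\widehat \Psi$. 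Without this step your Schur sums are not finite as estimated; with it, your proof collapses to the paper's.
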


\par

The proof needs some preparing lemmas.

\par

\begin{lemma}\label{lemmafourop3}
Assume that $v(x,\xi )=v(\xi )\in \mathscr P(\rr {n})$ is
submultiplicative and satisfies $v(t\xi )\le Cv(\xi )$ for some
constant $C$ which is independent of $t\in [0,1]$ and $\xi \in \rr
n$. Also assume that $f\in M^{\infty ,1}_{(v)}(\rr n)$, $\chi \in
C^\infty _0(\rr n)$ and that $x\in \rr n$, and let
$$
\fy _{x,j,k}(y)=\chi (y)\int _0^1 (1-t)f(x+ty)y_jy_k\, dt .
$$
Then there is a constant $C$ and a function $g\in M^1_{(v)}(\rr n)$
such that $\nm {g}{M^1_{(v)}}\le C\nm f{M^{\infty ,1}_{(v)}}$ and
$|\mathscr F(\fy _{x,j,k})(\xi )|\le \widehat g(\xi )$.
\end{lemma}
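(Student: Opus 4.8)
The plan is to take the Fourier transform of $\varphi_{x,j,k}$, insert a reconstruction formula for $f$ with a fixed Schwartz window, and thereby replace the troublesome $t$-dependent dilation $f\mapsto f(x+t\,\cdot\,)$ by a uniformly controlled convolution kernel. Write $\Psi$ for the smooth compactly supported function $\Psi(y)=\chi(y)y_jy_k$, so that $\varphi_{x,j,k}(y)=\int_0^1(1-t)\Psi(y)f(x+ty)\,dt$. Taking $t=0$ in the hypothesis $v(t\xi)\le Cv(\xi)$ forces $v\ge v(0)/C>0$, hence $M^{\infty,1}_{(v)}\subseteq M^{\infty,1}\subseteq C$, so $f$ is continuous; since $\Psi$ has compact support, $\mathscr F(\varphi_{x,j,k})(\xi)=\int_0^1(1-t)\mathscr F(\Psi f(x+t\,\cdot\,))(\xi)\,dt$, and it suffices to majorise $|\mathscr F(\Psi f(x+t\,\cdot\,))(\xi)|$ uniformly for $t\in(0,1]$.

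Next I would fix $\phi\in \mathscr S(\rr n)$ with $\nm\phi{L^2}=1$ and insert the inversion formula $f=(2\pi)^{-n/2}\iint V_\phi f(u,\eta)e^{i\scal{\cdot}{\eta}}\phi(\cdot-u)\,du\,d\eta$ into $f(x+tz)$. Taking the Fourier transform in $z$ and writing $\Theta_{t,x,u}(z)=\Psi(z)\phi(x+tz-u)$, the modulation in $\eta$ only shifts the frequency, so that $|\mathscr F(\Psi f(x+t\,\cdot\,))(\xi)|\le (2\pi)^{-n/2}\iint |V_\phi f(u,\eta)|\,|\widehat{\Theta_{t,x,u}}(\xi-t\eta)|\,du\,d\eta$ (the manipulations being justified by the absolute convergence of the final bound, or by approximating $f$ in the narrow topology of Proposition \ref{p2.3}). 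The key technical point is a bound on $\widehat{\Theta_{t,x,u}}$ that is uniform in $t\in(0,1]$ and in $x$: since $\Theta_{t,x,u}$ is supported in $\supp\Psi$ and, for large $|x-u|$, $\eabs{x-u}\le C\eabs{x+tz-u}$ on $\supp\Psi$, every $z$-derivative $t^{|\gamma|}\partial^\gamma_z[\phi(x+tz-u)]$ is bounded by $C_{M,\gamma}\eabs{x-u}^{-M}$ uniformly in $t\le 1$ (the factors $t\le1$ being harmless). Hence, with the fixed polynomially decaying profile $\Lambda(\zeta)=C\eabs\zeta^{-M'}$, one gets $|\widehat{\Theta_{t,x,u}}(\zeta)|\le C_M\eabs{x-u}^{-M}\Lambda(\zeta)$, uniformly in $t\in(0,1]$ and $x$.

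Choosing $M>n$ and integrating in $u$ collapses $\int |V_\phi f(u,\eta)|\eabs{x-u}^{-M}\,du$ into $C\,W(\eta)$, where $W(\eta)=\sup_u|V_\phi f(u,\eta)|$ satisfies $\int W(\eta)v(\eta)\,d\eta=\nm f{M^{\infty,1}_{(v)}}$. Performing the $t$-integration yields the $x$-independent majorant $|\mathscr F(\varphi_{x,j,k})(\xi)|\le H(\xi):=C\int_0^1\!\int W(\eta)\Lambda(\xi-t\eta)\,d\eta\,dt\ge 0$. To realise $H$ as $\widehat g$, put $\lambda=\mathscr F^{-1}\Lambda$; then $\Lambda(\xi-t\eta)=\mathscr F(e^{i\scal{\cdot}{t\eta}}\lambda)(\xi)$, so $H=\widehat g$ with $g=C\int_0^1\!\int W(\eta)e^{i\scal{\cdot}{t\eta}}\lambda\,d\eta\,dt$, and the required majorisation $|\mathscr F(\varphi_{x,j,k})|\le \widehat g$ holds. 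For $M'$ large the Fourier mapping properties in Remark \ref{p1.7}\,(5) give $\lambda\in M^1_{(v)}$.

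Finally I would estimate $\nm g{M^1_{(v)}}$ by Minkowski's inequality for the $M^1_{(v)}$-norm, obtaining $\nm g{M^1_{(v)}}\le C\int_0^1\!\int W(\eta)\nm{e^{i\scal{\cdot}{t\eta}}\lambda}{M^1_{(v)}}\,d\eta\,dt$, and apply the modulation covariance of Remark \ref{p1.7}\,(6), which bounds $\nm{e^{i\scal{\cdot}{t\eta}}\lambda}{M^1_{(v)}}$ by $C\,v(t\eta)\nm\lambda{M^1_{(v)}}$. Here the hypothesis $v(t\eta)\le C\,v(\eta)$ for $t\in[0,1]$ is exactly what removes the $t$-dependence, giving a bound $C\,v(\eta)\nm\lambda{M^1_{(v)}}$ uniform in $t$. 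Since $\int_0^1dt=1$ and $\lambda\in M^1_{(v)}$, this leads to $\nm g{M^1_{(v)}}\le C\int W(\eta)v(\eta)\,d\eta=C\nm f{M^{\infty,1}_{(v)}}$, as required. The main obstacle throughout is that the inner operation $f\mapsto f(x+t\,\cdot\,)$ degenerates as $t\to0$; reconstructing $f$ with a fixed window converts this into the harmless approximate-identity behaviour of $\widehat{\Theta_{t,x,u}}$, while submultiplicativity together with $v(t\,\cdot\,)\le Cv$ absorbs the scaling on the weight side.
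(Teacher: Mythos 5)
Your proof is correct, but it reaches the required majorant by a genuinely different route than the paper. The paper handles the degeneration of $f\mapsto f(x+t\,\cdot\,)$ at $t\to 0$ by the scaling substitution $y\mapsto y/t$, which produces a Jacobian $t^{-n}$, and then cancels it through an exact Gaussian computation: after writing $\psi_1\psi_2=1$ with $\psi_1(y)=e^{-|y|^2}$ and $\psi_2(y)=e^{|y|^2}$, it computes $\mathscr F(\psi_0(\cdo /t)\,\psi_2)$ explicitly and gets the bound $Ct^ne^{-t^2|\xi |^2/16}$, whose factor $t^n$ exactly absorbs the Jacobian; since this trick is Gaussian-specific, the paper needs a second step ($\fy _{x,j,k}=\chi _1h_{x,j,k}$ together with $M^1_{(v)}\ast L^1_{(v)}\subseteq M^1_{(v)}$) to pass from the Gaussian cutoff to general $\chi \in C_0^\infty$. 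You avoid any change of variables: you reconstruct $f$ from its short-time Fourier transform with a fixed window and prove the $t$-uniform kernel bound $|\widehat {\Theta _{t,x,u}}(\zeta )|\le C\eabs {x-u}^{-M}\eabs \zeta ^{-M'}$ by integration by parts, exploiting that the dilation $z\mapsto tz$ with $t\le 1$ only produces harmless factors $t^{|\beta |}\le 1$ and that $\eabs {x-u}\le C\eabs {x+tz-u}$ on the fixed compact support of $\Psi$ — so general $\chi$ is handled in one stroke. Both arguments then land on a majorant of the same shape $\int _0^1\!\int W(\eta )K(\xi -t\eta )\, d\eta \, dt$ (Gaussian $K$ with $W=H_{\infty ,f}$ in the paper; polynomially decaying $K=\Lambda$ with $W(\eta )=\sup _u|V_\phi f(u,\eta )|$ for you) and conclude identically via Minkowski's inequality, Remark \ref{p1.7}\,(6), and the hypothesis $v(t\eta )\le Cv(\eta )$; in both constructions $g$ is independent of $x$, which is what Lemma \ref{lemmafourop33} later needs. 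What each buys: the paper's computation is completely explicit and self-contained but tied to Gaussians and to the extra convolution step; yours is more robust and shorter on the structural side, at the price of invoking STFT inversion (adequately justified, as you note, by the absolute convergence your kernel bound provides, since $t=0$ in $v(t\xi )\le Cv(\xi )$ gives $v\ge v(0)/C>0$ and hence $\int W\, d\eta <\infty$) and of the auxiliary fact $\lambda =\mathscr F^{-1}\eabs \cdo ^{-M'}\in M^1_{(v)}$. On that last point your citation of Remark \ref{p1.7}\,(5) is loose, since $v(x,\xi )=v(\xi )$ is not symmetric in $(x,\xi )$; but the fact itself is routine and true for $M'$ large: $|V_\Phi \lambda (x,\xi )|=|V_{\widehat \Phi}\Lambda (\xi ,-x)|\le C_K\eabs x^{-K}\eabs \xi ^{-M'}$ for every $K$, and this is integrable against $v(\xi )\, dxd\xi$ because $v$ is polynomially bounded, so the step is a gap in citation only, not in substance.
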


\par

\begin{proof}
We first prove the assertion when $\chi$ is replaced by $\psi
_0(y)=e^{-2|y|^2}$. For conveniency we let 
$$
\psi _1(y)= e^{-|y|^2},\quad \text{and}\quad \psi _2(y) = e^{|y|^2},
$$
and 
$$
H_{\infty ,f} (\xi )\equiv \sup _{x}|\mathscr F(f\, \psi _1(\cdo
-x))(\xi )|.
$$
We claim that $g$, defined by
%%%
\begin{equation}\label{defg}
\widehat g(\xi)=\int_0^1\int _{\rr n}(1-t)H_{\infty,f}(\eta)e^{-|\xi-
t\eta|^2/16}\,d\eta dt,
\end{equation}
fulfills the required properties.

\par

In fact, by applying $M^1_{(v)}$ norm on $g$ and using Minkowski's
inequality and Remark \ref{p1.7} (6), we obtain
\begin{align*}
\nm g{M^1_{(v)}} &= \Big \Vert \int_0^1\int _{\rr n}
(1-t)H_{\infty,f}(\eta)e^{-|\xi- t\eta|^2/16}\,d\eta dt \Big \Vert
_{M^1_{(v)}}
\\[1ex]
&\le \int_0^1\int _{\rr n} (1-t)H_{\infty,f}(\eta)\nm {e^{-|\cdo -
t\eta|^2/16}}{M^1_{(v)}}\,d\eta dt
\\[1ex]
&\le C_1\int_0^1\int _{\rr n} (1-t)H_{\infty,f}(\eta)\nm {e^{-|\cdo
|^2/16}}{M^1_{(v)}}v(t\eta )\,d\eta dt
\\[1ex]
&\le C_2\int_0^1\int _{\rr n} (1-t)H_{\infty,f}(\eta)v(\eta )\nm
{e^{-|\cdo |^2/16}}{M^1_{(v)}}\,d\eta dt
\\[1ex]
&= C_3\nm {H_{\infty,f}v}{L^1} = C_3\nm f{M^{\infty ,1}_{(v)}}.
\end{align*}

\par

In order to prove that $|\mathscr F(\fy _{x,j,k})(\xi )|\le g(\xi )$,
we let $\psi (y)=\psi _{j,k}(y)= y_jy_k\psi _0 (y)$. Then
$$
\fy _{x,j,k}(y)= \psi (y)\int _0^1 (1-t)f(x+ty)\, dt.
$$
By a change of variables we obtain
\begin{equation}\label{estfop3}
\begin{aligned}
|\mathscr F(\fy _{x,j,k})(\xi )| &= \Big |\int _0^1 (1-t)\Big ( \int
_{\rr n} f(x+ty)\psi (y)e^{-i\scal y\xi}\, dy\Big )\, dt \Big |
\\[1ex]
&= \Big |\int _0^1 t^{-n}(1-t)\mathscr F(f\, \psi ((\cdo -x)/t))(\xi
/t)e^{i\scal x\xi /t}\, dt\Big |
\\[1ex]
&\le \int _0^1 t^{-n}(1-t)\sup _{x\in \rr n}|\mathscr F(f\,
\psi ((\cdo -x)/t))(\xi /t)|\, dt.
\end{aligned}
\end{equation}
We need to estimate the right-hand side. By straight-forward
computations we get
\begin{multline*}
|\mathscr F(f\, \psi ((\cdo -x)/t))(\xi )|
\\[1ex]
\le  (2\pi )^{-n/2}\big
( |\mathscr F(f\, \psi _1(\cdo -x))|*| \mathscr F(\psi
((\cdo -x)/t)\, \psi _2(\cdo -x))|\big )(\xi )
\\[1ex]
=(2\pi )^{-n/2}\big
( |\mathscr F(f\, \psi _1(\cdo -x))|*| \mathscr F(\psi
(\cdo /t)\, \psi _2)|\big )(\xi )
\end{multline*}
where the convolutions should be taken with respect to the
$\xi$-variable only. Then
\begin{equation}\label{estfop33}
|\mathscr F(f\, \psi ((\cdo -x)/t))(\xi )|
\le (2\pi )^{-n/2}\big
( H_{\infty ,f}*| \mathscr F(\psi (\cdo /t)\, \psi _2)|\big )(\xi )
\end{equation}
For the estimate of the latter Fourier transform we observe that
\begin{equation}\label{gaussest1}
| \mathscr F(\psi (\cdo /t)\, \psi _2)| =  |\partial _j\partial _k
\mathscr F(\psi _0(\cdo /t)\, \psi _2)|.
\end{equation}
Since $\psi _0$ and $\psi _2$ are Gauss functions and $0\le t\le 1$, a
straight-forward computation gives
\begin{equation}\label{gaussest2}
\mathscr F(\psi _0(\cdo /t)\, \psi _2)(\xi ) = \pi
^{n/2}t^n(2-t^2)^{-n/2}e^{-t^2|\xi |^2/(4(2-t^2))}.
\end{equation}
Thus a combination of \eqref{gaussest1} and \eqref{gaussest2}
therefore give
\begin{equation}\label{gaussest3}
| \mathscr F(\psi (\cdo /t)\, \psi _2)(\xi )|\le
Ct^ne^{-t^2|\xi |^2/16},
\end{equation}
for some constant $C$ which is independent of $t\in [0,1]$. The
assertion now follows by combining \eqref{estfop3}, \eqref{estfop33}
and \eqref{gaussest3}.

\par

In order  to prove the result for general $\chi\in C^\infty_0(\rr n)$
we set
$$
h_{x,j,h}(y)=\psi_0(y)\int_0^1 (1-t)f(x+ty)y_jy_k\, dt,
$$
and we observe that the result is already proved  when
$\varphi_{x,j,k}$ is replaced by $h_{x,j,h}$ and moreover $\varphi
_{x,j,k}=\chi_1 h_{x,j,k}$, for some $\chi_1\in C^\infty_0(\rr n)$.
Hence if $g_0$ is given as the right-hand side  of \eqref{defg}, the
first part of the proof  shows that
$$
\left| \mathscr F(\varphi_{x,j,k})(\xi)\right|=\left| \mathscr
F(\chi_1 h_{x,j,k}(\xi))\right|\le (2\pi)^{-n/2}|\widehat{\chi_1}|\ast
g-0(\xi)\equiv g(\xi).
$$
Moreover $\Vert g_0 \Vert_{M^1_{(v)}} \le C\Vert f \Vert_{M^{\infty,
1}_{(v)}}$.

\par

Since $M^1_{(v)}\ast L^1_{(v)}\subseteq M^1_{(v)}$, we get for some
positive constants $C, C_1$
$$
\Vert g \Vert_{M^1_{(v)}}\le C\Vert
\widehat{\chi_1}\Vert_{L^1_{(v)}}\Vert g_0 \Vert_{M^1_{(v)}}\le C_1
\Vert f \Vert_{M^{\infty, 1}_{(v)}},
$$
which proves the result
\end{proof}

\par

As a consequence of Lemma \ref{lemmafourop3} we have the following
result.

\par

\begin{lemma}\label{lemmafourop33}
Assume that $v(x,\xi )=v(\xi )\in \mathscr P(\rr {n})$ is
submultiplicative and satisfies $v(t\xi )\le Cv(\xi )$ for some
constant $C$ which is independent of $t\in [0,1]$ and $\xi \in \rr
n$. Also assume that $f_{j,k}\in M^{\infty ,1}_{(v)}(\rr n)$ for
$j,k=1,\dots, n$, $\chi \in
C^\infty _0(\rr n)$ and that $x\in \rr n$, and let
$$
\fy _{x}(y)=\sum _{j,k=1,\dots ,n}\fy _{x,j,k}(y),\quad
\text{where}\quad \fy _{x,j,k}(y)=\chi (y)\int _0^1
(1-t)f_{j,k}(x+ty)y_jy_k\, dt .
$$
Then there is a constant $C$ and a function $\Psi \in M^1_{(v)}(\rr
n)$ such that
$$
\nm {\Psi}{M^1_{(v)}}\le \exp(C\sup _{j,k}\nm {f_{j,k}}{M^{\infty
,1}_{(v)}})
$$
and
\begin{equation}\label{psiestim}
|\mathscr F(\exp (i\fy _{x})(\xi ))|\le (2\pi )^{n/2}\delta _0
+\widehat \Psi (\xi ).
\end{equation}
\end{lemma}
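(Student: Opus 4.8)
The plan is to reduce the whole statement to two inputs already available: the single-term estimate of Lemma \ref{lemmafourop3}, applied to each $f_{j,k}$ separately, and the entire-function composition estimate in the second half of Proposition \ref{multprop}. First I would apply Lemma \ref{lemmafourop3} to every $f_{j,k}$, obtaining functions $g_{j,k}\in M^1_{(v)}(\rr n)$ with $\nm {g_{j,k}}{M^1_{(v)}}\le C\nm {f_{j,k}}{M^{\infty ,1}_{(v)}}$ and $|\mathscr F(\fy _{x,j,k})(\xi )|\le \widehat{g_{j,k}}(\xi )$, where the dominating functions are \emph{nonnegative} (this is visible from the explicit formula \eqref{defg}). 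Putting $G=\sum _{j,k}g_{j,k}$ and using $\fy _x=\sum _{j,k}\fy _{x,j,k}$ together with the triangle inequality, I collapse these into the single pointwise bound $|\mathscr F(\fy _x)(\xi )|\le \widehat G(\xi )$ with $\widehat G\ge 0$, and $\nm G{M^1_{(v)}}\le Cn^2\sup _{j,k}\nm {f_{j,k}}{M^{\infty ,1}_{(v)}}$.

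Next I would pass from $\fy _x$ to $\exp (i\fy _x)$ through the power series. Since $\fy _x$ is bounded and supported in $\supp \chi$, the series $e^{i\fy _x}-1=\sum _{N\ge 1}(i\fy _x)^N/N!$ converges uniformly, hence in $\mathscr S'(\rr n)$, so that $\mathscr F(e^{i\fy _x})=(2\pi )^{n/2}\delta _0+\sum _{N\ge 1}(i^N/N!)\mathscr F(\fy _x^N)$, the constant term contributing precisely $\mathscr F(1)=(2\pi )^{n/2}\delta _0$. For each power, the Fourier transform turns products into normalized convolutions, $\mathscr F(\fy _x^N)=(2\pi )^{-n(N-1)/2}\widehat{\fy _x}^{*N}$; because $\widehat G\ge 0$ dominates $|\widehat{\fy _x}|$ pointwise, convolution preserves the domination and $|\mathscr F(\fy _x^N)(\xi )|\le (2\pi )^{-n(N-1)/2}\widehat G^{*N}(\xi )=\mathscr F(G^N)(\xi )$, the latter being nonnegative. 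Keeping the $(2\pi )$-normalization in the product-to-convolution rule correct is the one point demanding care here, but it is routine.

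I would then \emph{define} $\Psi := e^G-1$, so that $\widehat \Psi =\sum _{N\ge 1}\mathscr F(G^N)/N!$ with all summands nonnegative. Summing the per-term bounds of the previous step yields exactly $|\mathscr F(e^{i\fy _x})(\xi )|\le (2\pi )^{n/2}\delta _0+\widehat \Psi (\xi )$, which is \eqref{psiestim}. To control $\nm \Psi {M^1_{(v)}}$ I invoke the second half of Proposition \ref{multprop} with $\phi (z)=\psi (z)=e^z-1$; the hypothesis $u_0=0$, required since here $p=1<\infty$, holds because $\phi (0)=0$. This gives $\nm \Psi {M^1_{(v)}}=\nm {e^G-1}{M^1_{(v)}}\le C(\exp (C\nm G{M^1_{(v)}})-1)$. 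Inserting the bound on $\nm G{M^1_{(v)}}$ from the first step and absorbing the leading constants into the exponent — comparing power series term by term shows $C(e^{As}-1)\le e^{Bs}$ for $B=A\max (C,1)$ — produces the claimed estimate $\nm \Psi {M^1_{(v)}}\le \exp (C\sup _{j,k}\nm {f_{j,k}}{M^{\infty ,1}_{(v)}})$.

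The main obstacle is the bookkeeping of the second paragraph: one must justify termwise application of $\mathscr F$ to the exponential series (legitimate because $\fy _x$ is compactly supported and bounded) and, crucially, recognize that replacing $|\widehat{\fy _x}|$ by the nonnegative majorant $\widehat G$ lets each convolution power be dominated by $\mathscr F(G^N)$, so that the entire estimate reorganizes around $e^G-1$ and Proposition \ref{multprop} becomes directly applicable. Everything else is routine constant-tracking.
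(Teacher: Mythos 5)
Your proposal is correct and takes essentially the same route as the paper's proof: apply Lemma \ref{lemmafourop3} to dominate $|\mathscr F(\fy _x)|$ by a nonnegative $\widehat G$ with $\nm G{M^1_{(v)}}\le C\sup _{j,k}\nm {f_{j,k}}{M^{\infty ,1}_{(v)}}$, expand $e^{i\fy _x}$ in its Taylor series, dominate each convolution power $|\widehat {\fy _x}|^{*N}$ by $\widehat G^{*N}$, and sum using the $M^1_{(v)}$ multiplication property. The only cosmetic difference is that you invoke the entire-function part of Proposition \ref{multprop} with $\phi (z)=e^z-1$ as a packaged step (defining $\Psi =e^G-1$ outright), whereas the paper defines $\Psi$ as the series $\sum _{l\ge 1}C^l\Upsilon _l/l!$ and carries out the same term-by-term algebra estimate inline.
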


\par

\begin{proof}
By Lemma \ref{lemmafourop3}, we may find a function $g\in M^1_{(v)}$
and a constant $C>0$ such that
$$
|\widehat {\fy _x}(\xi )| \le \widehat g(\xi ),\qquad \nm
g{M^1_{(v)}}\le C \sup _{j,k}\left(\nm {f_{j,k}}{M^{\infty
,1}_{(v)}}\right).
$$
Set
\begin{equation*}
\begin{alignedat}{3}
\Phi _{0,x} &= (2\pi )^{n/2}\delta _0,&\qquad \Phi _{l,x} &= |\mathscr
F(\fy _{x})| * \cdots *|\mathscr F(\fy _{x})|,&\qquad l&\ge 1
\\[1ex]
\Upsilon _0 &= (2\pi )^{n/2}\delta _0,&\qquad \widehat \Upsilon _l &=
\widehat g*\cdots *\widehat g,&\qquad l&\ge 1,
\end{alignedat}
\end{equation*}
with $l$ factors in the convolutions. Then by Taylor expansion,
there is a positive constant $C$ such that
\begin{equation*}
|\mathscr F(\exp (i\fy _{x}(\cdo ))(\xi ))| \le \sum _{l=0}^\infty
C^{l}\Phi _{l,x}/l! \le  \sum _{l=0}^\infty C^{l}\widehat \Upsilon
_l/l!.
\end{equation*}
Hence, if
$$
\Psi \equiv \sum _{l=1}^\infty C^{l}\Upsilon _l/l! ,
$$
then \eqref{psiestim} follows. Furthermore, since $v$ is
submultiplicative, by means of Proposition \ref{multprop} we obtain
$$
\nm {\Upsilon _l}{M^{1}_{(v)}} = (2\pi )^{(l-1)n/2}\nm {g \cdots
g}{M^{1}_{(v)}} \le (C_1\nm g{M^{1}_{(v)}})^l,\qquad l\ge 1,
$$
for some positive constant $C_1$. This gives
\begin{multline*}
\nm \Psi{M^1_{(v)}}\le \sum _{l=1}^\infty \nm {\Upsilon
_l}{M^{1}_{(v)}}/l! \le \sum _{l=1}^\infty (C_1\nm
g{M^{1}_{(v)}})^l/l!
\\[1ex]
\le \sum _{l=1}^\infty (C_2\sup _{j,k}(\nm {f_{j,k}}{M^{\infty
,1}_{(v)}})^l/l!
\le \exp (C_2\sup _{j,k}\nm {f_{j,k}}{M^{\infty ,1}_{(v)}}),
\end{multline*}
for some constants $C_1$ and $C_2$, and the assertion is proved.
\end{proof}

\par

\begin{proof}[Proof of Theorem \ref{boulkemA}]
We shall mainly follow the proof of Theorem 3.2 in \cite{Bu1}. First
assume that $a\in C_0^\infty (\rr {2n+m})$ and $f,g\in \mathscr S(\rr
n)$. Then it follows that $\op _\fy (a)$ makes sense as a continuous
operator from $\mathscr S$ to $\mathscr S'$. Since
$$
|\mathscr F(e^{i\Psi _{2,X}}\chi )|\le (2\pi )^{-n+m/2}|\mathscr
F(e^{i\Psi _{2,X}})|*|\widehat \chi |,\qquad |\mathscr F(a(\cdo
+X)\chi )| = |V_\chi a(X,\cdo )|,
$$
if follows from Lemma \ref{lemmafourop33} that
\begin{equation}\label{Haest}
|\mathcal H_{a,\fy}(X,\xi ,\eta )|\le C (G*|V_\chi a(X,\cdo )|)(\xi
-\fy '_x(X), \eta -\fy '_y(X),-\fy '_\zeta (X)),
\end{equation}
for some non-negative $G\in L^1_{(v)}$ which satisfies $\nm
G{L^1_{(v)}}\le C\exp (C\nm {\fy ''}{M^{\infty ,1}_{(v)}}$. Here
$\mathcal H_{a,\fy}$ is the same as in  \eqref{Hdef}.

\par

Next we set
\begin{equation}\label{Efuncdef}
\begin{aligned}
\mathbf E_{a,\omega }(X,\xi ,\eta ,z) &= |V_\chi a(X,\xi ,\eta
,z)\omega (X,\xi ,\eta ,z)|,\qquad
\\[1ex]
\widetilde {\operatorname E}_{a,\omega}(\xi ,\eta ,z) &= \sup
_{X}\mathbf E_{a,\omega }(X,\xi ,\eta ,z)
\\[1ex]
F_1(y,\eta ) &= |V_{\chi _1}f(y,\eta )\omega _1(y,\eta )|,
\\[1ex]
F_2(x,\xi ) &= |V_{\chi _2}g(x,\xi )/\omega _2(x,\xi )|,
\end{aligned}
\end{equation}
and
\begin{align}
\mathcal Q_{a,\omega}(\mathsf X,\zeta ) &= \mathbf E_{a,\omega }(X,\xi
-\fy '_x(X),\eta -\fy '_y(X),-\fy '_\zeta (X)),\label{Qadef}
\intertext{and}
\mathcal R_{a,\omega ,\fy}(\mathsf X,\zeta ) &= ((Gv)*\mathbf
E_{a,\omega }(X,\cdo ))(\xi -\fy '_x(X),\eta -\fy '_y(X),-\fy '_\zeta
(X)),\label{Radef}
\\[1ex]
X &= (x,y,\zeta ),\quad \mathsf X = (x,y,\xi ,\eta ).\notag
\end{align}
Note here the difference between $X$ and $\mathsf X$. By combining
\eqref{weightsineq} with \eqref{Haest} we get
\begin{multline*}
\iiint | \mathcal H_{a,\fy}(X,\xi ,\eta ) (V_{\chi _1}f)(y,-\eta
){(V_{\chi _2}g)(x,\xi )} |\, dXd\xi d\eta
\\[1ex]
\le
C_1\iiint (G*|V_\chi a(X,\cdo )|)(\xi -\fy '_x(X), \eta -\fy
'_y(X),-\fy '_\zeta (X))\times
\\[1ex]
|(V_{\chi _1}f)(y,-\eta ){(V_{\chi _2}g)(x,\xi )}|\, dXd\xi d\eta
\\[1ex]
\le
C_2\iiint \mathcal R_{a,\omega ,\fy}(\mathsf X,\zeta )
F_1(y,-\eta )F_2(x,\xi )\, dXd\xi d\eta
\end{multline*}
Summing up we have proved that
\begin{equation}\label{estfourop1}
|(\op _\fy (a)f,g)| \le C \iiint \mathcal R_{a,\omega ,\fy}(\mathsf
X,\zeta ) F_1(y,-\eta )F_2(x,\xi )\, dXd\xi d\eta .
\end{equation}

\par

It follows from \eqref{Efuncdef}, \eqref{Radef}, \eqref{estfourop1}$'$
and H{\"o}lder's inequality that
\begin{equation}\label{eq.11}
|(\op _\fy (a)f,g)|\le C J_1\cdot J_2,
\end{equation}
where
\begin{align*}
J_1 &= \Big ( \iiint (Gv)\ast \widetilde {\operatorname E} _{a,\omega}
(\xi -\fy '_x(X) ,\eta -\fy '_y(X),-\fy '_\zeta (X)) F_1(y,-\eta )^p\,
dXd\xi d\eta \Big )^{1/p}
\\[1ex]
J_2 &= \Big ( \iiint ((Gv)\ast \widetilde {\operatorname E}
_{a,\omega} (\xi -\fy '_x(X) , \eta -\fy '_y(X),-\fy '_\zeta (X))
F_2(x,\xi )^{p'}\, dXd\xi d\eta \Big )^{1/p'}.
\end{align*}
We have to estimate $J_1$ and $J_2$. By taking $z =\fy '_\zeta (X)$,
$\zeta _0=\fy '_y(X)$, $y$, $\xi$ and $\eta$ as new variables of
integrations, and using \eqref{detphicond}, it follows that
\begin{multline*}
J_1 \le  \Big (  \mathsf d ^{-1}\iiint (Gv)\ast \widetilde
{\operatorname E}_{a,\omega} (\xi -\kappa _1(y,z,z_0)
, \eta -\zeta _0,z) F_1(y,-\eta )^p\, dy dz d\xi d\eta d\zeta _0\Big
)^{1/p}
\\[1ex]
= \Big (  \mathsf d ^{-1}\iiint (Gv)\ast \widetilde {\operatorname E}
_{a,\omega} (\xi, \zeta _0,z) F_1(y,-\eta )^p\, dy dz d\xi d\eta
d\zeta _0\Big )^{1/p}
\\[1ex]
= \mathsf d ^{-1/p}\nm {(Gv)\ast \widetilde {\operatorname E}
_{a,\omega }}{L^1}^{1/p}\nm {F_1}{L^p},
\end{multline*}
for some continuous function $\kappa _1$. It follows from Young's
inequality and \eqref{weightsineq} that
\begin{equation*}
\Vert (Gv)\ast \widetilde {\operatorname E} _{a,\omega}\Vert_{L^1}\le
\Vert G\Vert_{L^1_{(v)}}\Vert \widetilde {\operatorname E} _{a,
\omega} \Vert_{L^1}.
\end{equation*}
Hence
\begin{equation}\label{eq.22}
J_1\le \mathsf d ^{-1/p}\left(C\exp
(C\nm {\fy ''}{M^{\infty ,1}_{(v)}})\nm {a}{M^{\infty ,1}_{(\omega
)}}\right)^{1/p}\nm {f}{M^p_{(\omega _1)}}.
\end{equation}
If we instead take $x$, $y_0=\fy '_3(X)$, $\xi $, $\eta $ and $\zeta
_0=\fy '_1(X)$ as new variables of integrations, it follows by similar
arguments that
\begin{equation}\tag*{(\ref{eq.22})$'$}
J_2\le \mathsf d ^{-1/p'}\left (C\exp (C\nm {\fy ''}{M^{\infty
,1}_{(v)}})\nm {a}{M^{\infty ,1}_{(\omega )}}\right)^{1/p'}\nm
{g}{M^{p'}_{(1/\omega _2)}}.
\end{equation}
A combination of \eqref{eq.11}, \eqref{eq.22} and \eqref{eq.22}$'$ now
gives
$$
|(\op _{\fy}(a)f,g)| \le  C\mathsf d ^{-1}\nm a{M^{\infty
,1}_{(\omega )}} \nm {f}{M^p_{(\omega _1)}} \nm {g}{M^{p'}_{(1/\omega
_2)}}\exp (C\nm {\fy ''}{M^{\infty ,1}_{(v)}}),
$$
which proves \eqref{normuppsk2}, and the result follows when $a\in
C^\infty_0(\rr{2n+m})$ and $f,g\in\mathscr S(\rr n)$.

\par

Since $\mathscr S$ is dense in $M^p_{(\omega _1)}$ and
$M^{p'}_{(1/\omega _2)}$, the result also holds for $a\in C_0^\infty $
and $f\in M^p_{(\omega _1)}$. Hence it follows by Hahn-Banach's
theorem that the asserted extension of the map $a\mapsto \op _\fy (a)$
exists.

\par

It remains to prove that this extension is unique. Therefore assume
that $a\in M^{\infty ,1}_{(\omega )}$ is arbitrary, and take a
sequence $a_j\in C_0^\infty$ for $j=1,2,\dots$ which converges
to $a$ with respect to the narrow convergence. Then $\widetilde
{\operatorname  E}_{a_j,\omega}$ converges to $\widetilde
{\operatorname E} _{a,\omega}$ in $L^1$ as
$j$ turns to infinity. By \eqref{psidef}--\eqref{fourrel} and the
arguments at the above, it follows from Lebesgue's theorem that
$$
(\op _\fy (a_j)f,g)\to (\op _\fy (a)f,g)
$$
as $j$ turns to infinity. This proves the uniqueness, and the result
follows.
\end{proof}

\par

%%%%%
\subsection{Fourier integral operators with amplitudes in coorbit
spaces}\label{ssec2.5}
%%%%%
A crucial point concerning the uniqueness when extending the
definition of $\op _\fy$ to amplitudes in $M^{\infty ,1}_{(\omega )}$
in Theorem \ref{boulkemA} is that $C_0^\infty$ is dense in $M^{\infty
,1}_{(\omega )}$ with respect to the narrow convergence. On the other
hand, the uniqueness of the extension of the definition might be
violated when spaces of amplitudes are considered where such density
or duality properties are missing. In the present paper we use the
reformulation \eqref{fourrel} to extend the definition of the Fourier
integral operator in \eqref{fintop} to certain amplitues which are not
contained in $M^{\infty ,1}_{(\omega )}$.

\par

More precisely, assume that $a\in \mathscr S'(\rr {N+m})$, $f\in \rr
{n_1}$, $g\in \mathscr S(\rr {n_2})$ and that the mapping
$$
(X,\xi ,\eta )\mapsto \mathcal H_{a,\fy}(X,\xi ,\eta ) (V_{\chi
_0}f)(y,-\eta )\overline {(V_{\chi _0}g)(x,\xi )}
$$
belongs to $L^1(\rr {N+m}\times \rr {N})$. (Here recall that
$N=n_1+n_2$, where, from now on, $n_1$ and $n_2$ might be different.)
Then we let $T_{a,\fy}(f,g)$ be defined as the right-hand side of
\eqref{fourrel}.

\par

\begin{defn}\label{deffourop}
Assume that $N=n_1+n_2$, $v\in \mathscr P(\rr {N+m}\times \rr {N+m})$
is submultiplicative and satisfies \eqref{vvrel}, $\fy \in C^2(\rr
{N+m})$ is such and that $\fy ^{(\alpha )}\in M^{\infty
,1}_{(v)}$ for all multi-indices $\alpha$ such that $|\alpha |=2$, and
that $a\in \mathscr S'(\rr {N+m})$ is such that
$f\mapsto T_{a,\fy}(f,g_0)$ and $g\mapsto T_{a,\fy}(f_0,g)$ are
well-defined and continuous from $\mathscr S(\rr {n_1})$ and from
$\mathscr S(\rr {n_2})$ respectively to $\mathbf C$, for each fixed
$f_0\in \mathscr S(\rr {n_1})$ and $g_0\in \mathscr S(\rr
{n_2})$. Then the pair $(a,\fy )$ is called \emph{admissible}, and the
Fourier integral operator $\op _\fy (a)$ is the linear continuous
mapping from $\mathscr S(\rr {n_1})$ to $\mathscr S'(\rr {n_2})$ which
is defined by the formulas \eqref{psidef}, \eqref{Hdef} and
\eqref{fourrel}.
\end{defn}

\par

Here recall that if for each fixed $f_0\in \mathscr S(\rr {n_1})$ and
$g_0\in \mathscr S(\rr {n_2})$, the mappings $f\mapsto T(f,g_0)$ and
$g\mapsto T(f_0,g)$ are continuous from $\mathscr S(\rr {n_1})$ and
from $\mathscr S(\rr {n_2})$ respectively to $\mathbf C$, then it
follows by Banach-Steinhauss theorem that $(f,g)\mapsto T(f,g)$ is
continuous from $\mathscr S(\rr {n_1})\times \mathscr S(\rr {n_2})$ to
$\mathbf C$.

\par

The following theorem involves Fourier integral operators with
amplitudes which are not contained in $M^{\infty ,1}_{(\omega )}$.

\par

\begin{thm}\label{thm3.1A}
Assume that $N$, $\chi$, $\omega$, $\omega _j$, $v$, $\fy$, $V_j$,
$V_j'$, $\boldsymbol \varrho$ $\boldsymbol \tau$ and $\boldsymbol u$
for $j=0,1,2$ are the same as in Subsection \ref{ssec2.1}. Also assume
that $a\in \mathscr S'(\rr {N+m})$ fulfills $\nmm a<\infty$, where
$$
\nmm a \equiv \essup{x,y} \Big (\int _{V_2'}\big (\sup _{\zeta \in \rr
m,\, \boldsymbol \tau \in V_1'}|V_\chi a(X,\xi ,\eta ,z)|\big )\,
d\boldsymbol u\Big ),
$$
and that $|\det (\fy ''_{\boldsymbol \varrho ,\zeta})|\ge \mathsf d$
for some $\mathsf d>0$. Then {\rm{(i)--(ii)}} in Subsection
\ref{ssec2.2} hold for $p=\infty$.
\end{thm}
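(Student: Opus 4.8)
The plan is to follow the scheme of the proof of Theorem \ref{boulkemA}, adapting it to the endpoint exponent $p=\infty$ and to the coorbit amplitude norm $\nmm a$, and to prove the kernel estimate {\rm{(i)}} first, deducing the continuity {\rm{(ii)}} from it afterwards. As a preliminary reduction I would note that $\fy''$ is continuous and bounded, so that the reformulation \eqref{fourrel} of $(\op_\fy(a)f,g)=T_{a,\fy}(f,g)$ together with Lemma \ref{lemmafourop33} (applied to the quadratic remainder in \eqref{psidef}) yields, as in \eqref{Haest}, the pointwise domination of $\mathcal H_{a,\fy}$ by $C\,(G*|V_\chi a(X,\cdot)|)$ evaluated at $(\xi-\fy'_x(X),\eta-\fy'_y(X),-\fy'_\zeta(X))$, with $G\ge 0$ and $\nm G{L^1_{(v)}}\le C\exp(C\nm{\fy''}{M^{\infty,1}_{(v)}})$. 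All estimates below would first be carried out for $a\in C_0^\infty(\rr{N+m})$ and $f,g\in\mathscr S$, where $T_{a,\fy}$ is the genuine bilinear form of the operator.

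Next I would estimate the weighted norm $\nm{K_{a,\fy}}{M^\infty_{(\omega_0)}}$ directly. By straightforward manipulation of \eqref{fourrel}, the short-time Fourier transform of the kernel $K_{a,\fy}$ against a product window is expressed through the internal integral $\int_{\rr m}\mathcal H_{a,\fy}(x,y,\zeta,\xi,\eta)\,d\zeta$; inserting \eqref{Haest} and using the $v$-moderateness of $\omega$ together with the weight inequalities \eqref{weightsineq} turns the weighted integrand into $\int_{\rr m}\mathcal R_{a,\omega,\fy}(\mathsf X,\zeta)\,d\zeta$, with $\mathcal R_{a,\omega,\fy}$ as in \eqref{Radef}. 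Since $p=\infty$, the norm $\nm{K_{a,\fy}}{M^\infty_{(\omega_0)}}$ is simply the supremum of this quantity over the kernel phase-space variables $(x,y,\xi,\eta)$, so no H\"older splitting into $J_1,J_2$ as in the case $1<p<\infty$ is required here.

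The heart of the argument is the change of variables in the internal integration. With the $V_1\oplus V_2$ splitting of the frequency of $a$, the variable $\boldsymbol u\in V_2'$ is the $V_2'$-component of $(\xi-\fy'_x(X),\eta-\fy'_y(X),-\fy'_\zeta(X))$, and its Jacobian in $\zeta$ equals $\fy''_{\boldsymbol\varrho,\zeta}$ up to the identification $V_2\simeq V_2'$; because $|\det(\fy''_{\boldsymbol\varrho,\zeta})|\ge\mathsf d$ and $\fy''$ is bounded, the map $\zeta\mapsto\boldsymbol u$ is a global diffeomorphism, converting $\int_{\rr m}d\zeta$ into $\mathsf d^{-1}\int_{V_2'}d\boldsymbol u$ and producing the factor $\mathsf d^{-1}$. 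The convolution against $Gv$ is absorbed by Young's inequality, contributing $\nm G{L^1_{(v)}}\le C\exp(C\nm{\fy''}{M^{\infty,1}_{(v)}})$, while the remaining supremum over $\zeta$ and over the $V_1'$-frequency $\boldsymbol\tau$ reconstitutes $\mathcal E_{a,\omega}(x,y,\boldsymbol u)$; its integral over $\boldsymbol u\in V_2'$ and essential supremum over $(x,y)$ reproduce $\nmm a$. Collecting these contributions gives $\nm{K_{a,\fy}}{M^\infty_{(\omega_0)}}\le C\mathsf d^{-1}\exp(C\nm{\fy''}{M^{\infty,1}_{(v)}})\nmm a$, which is {\rm{(i)}}. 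I expect this change of variables---establishing its global validity for a merely $C^2$ phase whose Hessian lies only in $M^{\infty,1}_{(v)}$, and aligning the general $V_1\oplus V_2$ splitting with the mixed coorbit norm so that the transformed integral is \emph{exactly} $\nmm a$---to be the main obstacle.

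Finally, {\rm{(ii)}} would follow from {\rm{(i)}}. The same estimates show in particular that the integrand of \eqref{fourrel} is integrable for all $f,g\in\mathscr S$ and that $f\mapsto T_{a,\fy}(f,g_0)$ and $g\mapsto T_{a,\fy}(f_0,g)$ are continuous, so the pair $(a,\fy)$ is admissible and $\op_\fy(a)$ is defined by Definition \ref{deffourop}. Since $K_{a,\fy}\in M^\infty_{(\omega_0)}$ and $\omega_2(x,\xi)/\omega_1(y,-\eta)\le C_1\omega_0(x,y,\xi,\eta)$ by \eqref{weightsineq}, the forward implication of Proposition \ref{fourop3A} yields that $\op_\fy(a)$ is continuous from $M^1_{(\omega_1)}(\rr{n_1})$ to $M^\infty_{(\omega_2)}(\rr{n_2})$ with the norm bound in {\rm{(ii)}}. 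Because $p'=1<\infty$, $\mathscr S(\rr{n_1})$ is dense in $M^1_{(\omega_1)}$ by Proposition \ref{p1.4}{\,}(4), so the extension is unique; and the passage from $a\in C_0^\infty$ to general $a\in\mathscr S'$ with $\nmm a<\infty$ is justified by the uniform bound, all quantities in \eqref{fourrel} being well defined through the short-time Fourier transform for such $a$.
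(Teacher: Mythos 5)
Your proposal is correct and follows essentially the same route as the paper's proof: the pointwise domination \eqref{Haest} via Lemma \ref{lemmafourop33}, the change of variables replacing $\zeta$ by $-\fy '_{\boldsymbol \varrho}(X)$ with the Jacobian controlled through $|\det (\fy ''_{\boldsymbol \varrho ,\zeta})|\ge \mathsf d$, Young's inequality to absorb $Gv$ at cost $C\exp (C\nm {\fy ''}{M^{\infty ,1}_{(v)}})$, identification of the surviving expression with $\nmm a$, and then {\rm{(ii)}} deduced from {\rm{(i)}} via Proposition \ref{fourop3A} together with density of $\mathscr S$ in $M^1_{(\omega _1)}$. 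The only cosmetic deviation is that you estimate the short-time Fourier transform of $K_{a,\fy}$ directly (the route the paper itself sets up in \eqref{eq4.0}--\eqref{fundest1} just after this theorem), whereas the printed proof works with the bilinear form \eqref{estfourop1} and the bound $\nm {\mathcal K_{a,\omega ,Gv}}{L^\infty}\nm {F_1}{L^1}\nm {F_2}{L^1}$ --- equivalent formulations at $p=\infty$ --- and your global-invertibility claim for $\zeta \mapsto \boldsymbol u$ is precisely the same implicit step the paper takes when introducing the inverse functions $\kappa _1,\kappa _2$.
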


\par

We note that the conditions on $a$ in Theorem \ref {thm3.1A} means
that $a$ should belong to a subspace of $M^\infty _{(\omega )}$ which
is a superspace of $M^{\infty ,1}_{(\omega )}$.
Roughly speaking it follows that $a$ should belong to a weighted space
$M^\infty$  in some variables and to a superspace of $M^{\infty
,1}$ in the other variables.

\par

\begin{proof}
It suffices to prove that (i) in Subsection \ref{ssec2.2}.

\par

The notations are similar to that in the proof of Theorem
\ref{boulkemA}. Furthermore we let
\begin{equation*}
{\mathcal E}_{a,\omega} (x,y,\boldsymbol u) = \sup _{\zeta
,\boldsymbol \tau}\mathbf E_{a,\omega }(X,\xi ,\eta ,z),\quad
\text{and}\quad G_{1,v}(\boldsymbol u) = \int _{V_1'}G(\xi ,\eta ,z)\,
d\boldsymbol \tau,
\end{equation*}
where $\mathbf E$ is given by \eqref{Efuncdef}. By taking $x,y,-\fy
'_{\boldsymbol \varrho} (X),\xi ,\eta $ as new variables of
integration in \eqref{estfourop1}, and using the fact that $|\det (\fy
''_{\boldsymbol \varrho ,\zeta})|\ge \mathsf d$ we get
\begin{multline}\label{estagain4}
|(\op _\fy (a)f,g)| \le C{\mathsf d}^{-1}\int _{\rr {2N}}\mathcal
K_{a,\omega ,Cv} (\mathsf X) F_1(y,-\eta )F_2(x,\xi )\, d\mathsf X
\\[1ex]
\le C{\mathsf d}^{-1}\nm {\mathcal K_{a,\omega ,Gv}}{L^\infty}\nm
{F_1}{L^1}\nm {F_2}{L^1},
\end{multline}
where $\mathsf X=(x,y,\xi ,\eta )$ and
$$
\mathcal K_{a,\omega ,Gv}(\mathsf X) = \int _{V_2'} \big
((Gv)*(\mathbf E_{a,\omega}(x,y,\kappa _1,\cdo
))\big )((\xi ,\eta ,0)_{\rr {N+m}}-(\kappa _2,\boldsymbol
u)_{V_1'\oplus V_2'})\, d\boldsymbol u.
$$
for some continuous functions $\kappa _1=\kappa _1(x,y,\boldsymbol u)$
and $\kappa _2=\kappa _2(x,y,\boldsymbol u)$.

\par

We need to estimate $\nm {\mathcal K_{a,\omega ,Gv}}{L^\infty}$. By
Young's inequality and simple change of variables it follows that
\begin{equation*}
\nm {\mathcal K_{a,\omega ,Gv}}{L^\infty} \le \nm {Gv}{L^1}\cdot
J_{a,\omega},
\end{equation*}
where
\begin{multline*}
J_{a,\omega}=\essup {\mathsf X} \Big ( \int _{V_2'}\mathbf E_{a,\omega
}(x,y,\kappa _1(x,y,\boldsymbol u), (\kappa _2(x,y,\boldsymbol
u),\boldsymbol u)_{V_1'\oplus V_2'})\, d\boldsymbol u
\\[1ex]
\le \essup {\mathsf X} \Big ( \int _{V_2'}\Big (\sup _{\zeta
,\boldsymbol \tau} \mathbf E_{a,\omega }(x,y,\zeta , (\boldsymbol \tau
,\boldsymbol u)_{V_1'\oplus V_2'})\, d\boldsymbol u =\nmm a.
\end{multline*}
Hence
\begin{equation}\label{Kaomest}
\nm {\mathcal K_{a,\omega ,Gv}}{L^\infty} \le \nm {Gv}{L^1}\nmm a \le
C\exp (\nm {\fy ''}{M^{\infty ,1}_{(v)}})\nmm a .
\end{equation}

\par

A combination of \eqref{estagain4}, \eqref{Kaomest}, and the facts
that $\nm {F_1}{L^1}=\nm f{M^1_{(\omega _1)}}$ and $\nm {F_2}{L^1}=\nm
g{M^1_{(1/\omega _2)}}$ now gives that the pair $(a,\fy )$ is
admissible, and that (i) in Subsection \ref{ssec2.2} holds. The proof
is complete.
\end{proof}

\par

\begin{cor}\label{cor3.1A}
Assume that $N$, $\chi$, $\omega _0$, $\omega _j$, $v$ and $\fy$
for $j=1,2$ are the same as in Subsection \ref{ssec2.1}. Also assume
that $a\in \mathscr S'(\rr {N+m})$, and that one of the following
conditions holds:
\begin{enumerate}
\item $|\det (\fy ''_{\zeta ,\zeta })|\ge \mathsf d$ and $\nmm
a<\infty$, where
\begin{equation}\label{thm3.1Aineq1}
\nmm a = \sup _{x,y}\Big (\int _{\rr m}\sup _{\zeta ,\xi ,\eta }|V_\chi
a(X,\xi ,\eta ,z)\omega (X,\xi ,\eta ,z)|\, dz\Big ) \text ;
\end{equation}

\vrum

\item $m=n_1$, $|\det (\fy ''_{x,\zeta })|\ge \mathsf d$ and $\nmm
a<\infty$, where
\begin{equation}\label{thm3.1Aineq2}
\nmm a = \sup _{x,y}\Big (\int _{\rr {n_1}}\sup _{\zeta ,\eta
,z}|V_\chi a(X,\xi ,\eta ,z)\omega (X,\xi ,\eta ,z)|\, d\xi\Big )
\text ;
\end{equation}

\vrum

\item $m=n_2$, $|\det (\fy ''_{y,\zeta })|\ge \mathsf d$ and $\nmm
a<\infty$, where
\begin{equation}\label{thm3.1Aineq3}
\nmm a = \sup _{x,y}\Big (\int _{\rr {n_2}}\sup _{\zeta ,\xi
,z}|V_\chi a(X,\xi ,\eta ,z)\omega (X,\xi ,\eta ,z)|\, d\eta \Big ) .
\end{equation}
\end{enumerate}
Then the {\rm {(i)--(ii)}} in Subsection \ref{ssec2.2} hold for
$p=\infty$.
\end{cor}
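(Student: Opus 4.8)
The plan is to obtain all three cases as direct specializations of Theorem \ref{thm3.1A}, the only difference between them being the choice of the orthogonal splitting $\rr{N+m}=V_1\oplus V_2$ with $\dim V_1=N$ and $V_2=V_1^\bot$. Recall that in the hypotheses of Theorem \ref{thm3.1A} the block $\fy''_{\boldsymbol\varrho,\zeta}$ refers to the $m$ coordinate directions $e_{N+1},\dots,e_{N+m}$ spanning $V_2$, while in $\nmm a$ the dual variable $(\xi,\eta,z)$ is decomposed as $(\boldsymbol\tau,\boldsymbol u)_{V_1'\oplus V_2'}$, with the $V_1'$-component $\boldsymbol\tau$ taken in supremum and the $V_2'$-component $\boldsymbol u$ integrated. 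Hence for each case it suffices to pick the appropriate coordinate subspace $V_2$, read off the block $\fy''_{\boldsymbol\varrho,\zeta}$, identify the dual pair $(\boldsymbol\tau,\boldsymbol u)$, and verify that the abstract norm collapses to the stated one.

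First I would treat (1) by letting $V_2$ be the $\zeta$-coordinate subspace and $V_1$ the $(x,y)$-subspace, so that $\boldsymbol\varrho=\zeta$ and $\fy''_{\boldsymbol\varrho,\zeta}=\fy''_{\zeta,\zeta}$. Dually $V_2'$ is the $z$-subspace and $V_1'$ the $(\xi,\eta)$-subspace, so $\boldsymbol u=z$ and $\boldsymbol\tau=(\xi,\eta)$; combining the outer essential supremum in $(x,y)$ with the supremum over $\zeta$ and $\boldsymbol\tau$ produces $\sup_{\zeta,\xi,\eta}$, while the $V_2'$-integral becomes $\int_{\rr m}\cdots\,dz$, so that $\nmm a$ reduces exactly to \eqref{thm3.1Aineq1}. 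For (2) I would instead take $V_2$ to be the $x$-subspace, which the hypothesis $m=n_1$ renders $m$-dimensional; then $\boldsymbol\varrho=x$ and $\fy''_{\boldsymbol\varrho,\zeta}=\fy''_{x,\zeta}$, while $V_2'$ is the $\xi$-subspace and $V_1'$ the $(\eta,z)$-subspace, whence $\boldsymbol u=\xi$, $\boldsymbol\tau=(\eta,z)$, and $\nmm a$ becomes \eqref{thm3.1Aineq2}. Case (3) is entirely symmetric: taking $V_2$ to be the $y$-subspace, made $m$-dimensional by $m=n_2$, gives the block $\fy''_{y,\zeta}$ together with integration in $\eta$ and supremum over $\zeta,\xi,z$, which is \eqref{thm3.1Aineq3}.

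Since $\omega_0,\omega_j,v$ and $\fy$ are those of Subsection \ref{ssec2.1}, the accompanying weight conditions in \eqref{weightsineq} are left untouched by these choices, so in every case the determinant assumption $|\det(\fy''_{\boldsymbol\varrho,\zeta})|\ge\mathsf d$ and the finiteness of $\nmm a$ are exactly the standing hypotheses of Theorem \ref{thm3.1A}. That theorem then delivers (i)--(ii) of Subsection \ref{ssec2.2} for $p=\infty$, which is the claim. I anticipate no analytic difficulty; the single point demanding care is the bookkeeping of the orthogonal decomposition — verifying that the dimension constraints $m=n_1$ in (2) and $m=n_2$ in (3) are precisely what forces $V_2$ to coincide with the $x$- or $y$-subspace and makes $\fy''_{\boldsymbol\varrho,\zeta}$ a genuine square $m\times m$ matrix, and that the identification of $V_1'$ and $V_2'$ distributes the dual variables correctly between the supremum and the integral.
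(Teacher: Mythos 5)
Your proposal is correct and follows essentially the same route as the paper: the paper's proof likewise derives all three cases by specializing Theorem \ref{thm3.1A}, choosing in case (1) $V_2=V_2'$ to be the $\zeta$- (resp.\ $z$-) coordinate subspace with $\boldsymbol\varrho=\zeta$, $\boldsymbol\tau=(\xi,\eta)$, $\boldsymbol u=z$, in case (2) the $x$-subspace (using $m=n_1$) with $\boldsymbol\varrho=x$, $\boldsymbol\tau=(\eta,z)$, $\boldsymbol u=\xi$, and treating (3) symmetrically. Your bookkeeping of how the dimension constraints $m=n_1$, $m=n_2$ make $\fy''_{\boldsymbol\varrho,\zeta}$ a square $m\times m$ block, and of how the dual variables split between supremum and integral, matches the paper's (terser) argument exactly.
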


\par

\begin{proof}
If (1) is fulfilled, then the result follows by choosing
\begin{equation*}
\begin{aligned}
V_1 &= V_1' = \sets {(\xi ,\eta ,0)\in \rr {N+m}}{\xi \in \rr {n_2}, \
\eta \in \rr {n_1}},
\\[1ex]
V_2 &=V_2' = \sets {(0,0,\zeta )\in \rr {N+m}}{\zeta \in \rr m},
\end{aligned}
\end{equation*}
$\boldsymbol \varrho =\zeta$, $\boldsymbol \tau =(\xi ,\eta )$ and
$\boldsymbol u=z$ in Theorem \ref{thm3.1A}. If instead (2) is
fulfilled, then the result follows by choosing
\begin{equation*}
\begin{aligned}
V_1 &= V_1' = \sets {(0,\eta ,z)\in \rr {N+m}}{\eta \in \rr {n_1},\
z\in \rr m},
\\[1ex]
V_2 &= V_2' = \sets {(\xi ,0,0 )\in \rr {N+m}}{\xi \in \rr {n_2}},
\end{aligned}
\end{equation*}
$\boldsymbol \varrho =x$, $\boldsymbol \tau =(\eta ,z)$ and
$\boldsymbol u=\xi$ in Theorem \ref{thm3.1A}. The result follows
similar arguments if instead (3) is fulfilled. The details are left
for the reader.
\end{proof}

\par

The set of all $a\in \mathscr S'(\rr {N+m})$ which fulfills that
$\nmm a<\infty$, where $\nmm \cdo$ is the same as in Theorem
\ref{thm3.1A} is neither a modulation space nor a coorbit space of
that type which is considered in Subsection \ref{ssec1.2}. However it
is still a coorbit spaces in the sense of \cite{FG1,FG2}.

\par

Next we  discuss  Fourier integral operators where the amplitudes
belong to coorbit spaces which are related to the amplitude space in
Theorem \ref{thm3.1A}.

\par

Assume that $a\in \mathscr S'(\rr {N+m})$, $\omega ,v\in \mathscr
P(\rr {2(N+m)})$, $\omega _j(\rr {2n_j})$ and $\fy \in C(\rr {N+m})$
satisfy $\fy ''\in M^{\infty ,1}_{(v)}(\rr {N+m})$ satisfy
\eqref{vvrel} and \eqref{weightsineq}, as before. Also assume that
$\omega _0\in \mathscr P(\rr{2N})$ satisfies
\begin{equation}\label{omega0cond}
\omega _0(x,y,\xi ,\eta )\le C\omega (X,\xi -\fy '_x(X),\eta -\fy
'_y(X),-\fy '_\zeta (X)).
\end{equation}
Roughly speaking, the main part of the analysis in Section \ref{sec2}
concerns of finding appropriate estimates of the function $\mathbf E$,
defined in \eqref{Efuncdef}.

\par

We need to make some further reformulations of the short-time Fourier
transform of the distribution kernel $K_{a,\fy}$ of $\op _\fy (a)$ in
terms of \eqref{fourrel}. Formally, the kernel can be written as
$$
K_{a,\fy}(x,y) = (2\pi )^{-N/2}\int _{\rr m} a(X)e^{i\fy (X)}\,
d\zeta .
$$
(Cf. Theorem \ref{fourop3}.) Hence, if $0\le \chi _j\in C_0^\infty
(\rr {n_j})$ for $j=1,2$ are the same as in Section \ref{sec2}, then
it follows by straight-forward computations that
\begin{equation}\label{eq4.0}
(V_{\chi _1\otimes \chi _2}K_{a,\fy})(x,y,\xi ,\eta ) = (\op _\fy
(a)(\chi _1(\cdo -y)e^{-i\scal \cdo \eta }), \chi _2(\cdo -x)e^{i\scal
\cdo \xi }).
\end{equation}
By letting $f=\chi _1(\cdo -y)e^{-i\scal \cdo \eta }$ and $g=\chi
_2(\cdo -x)e^{-i\scal \cdo \xi }$, it follows that
\begin{align}
|(V_{\chi _1}f(y_1,\eta _1)| &= |(V_{\chi _1}\chi _1)(y_1-y,\eta
_1+\eta )|\label{eq4.1}
\intertext{and}
|(V_{\chi _2}g(x_1,\xi _1)| &= |(V_{\chi _2}\chi _2)(x_1-x,\xi _1+\xi
)|.\label{eq4.2}
\end{align}
Now we choose $N_0$ large enough such that $\omega _0$ is moderate
with respect to $\eabs {\cdo }^{N_0}$, and we set
\begin{equation*}
F(\mathsf X) = |(V_{\chi _1}\chi _1)(y,-\eta )(V_{\chi _2}\chi
_2)(x,\xi )\eabs {\mathsf X}^{N_0}|.
\end{equation*}
Then $F$ is a continuous function which decreases rapidly to zera at
infinity. Furthermore, it follows from \eqref{eq4.1} and \eqref{eq4.2}
that
\begin{equation}\label{eq4.3}
|(V_{\chi _1}f(y_1,-\eta _1)(V_{\chi _2}g(x_1,\xi _1)\omega _0(\mathsf
X)| \le  CF(\mathsf X_1-\mathsf X)\omega (\mathsf X_1),
\end{equation}
where the first inequality follows from the fact that
$$
\omega _0(\mathsf X)\le C\omega _0(\mathsf X_1)\eabs {\mathsf
X-\mathsf X_1}^{N_0}.
$$

\par

By combining \eqref{vvrel}, \eqref{weightsineq}, \eqref{estfourop1}
and \eqref{omega0cond}--\eqref{eq4.3} we obtain
\begin{equation}\label{fundest1}
|(V_{\chi _1\otimes \chi _2}K_{a,\fy})(\mathsf X)\omega _0(\mathsf X
)| \le C\iint \mathcal R_{a,\omega ,\fy}(\mathsf X_1,\zeta
_1)F(\mathsf X_1-\mathsf X)\, d\zeta _1d\mathsf X_1,
\end{equation}
for some constant $C$.

\par

We have now the following parallel result of Theorem \ref{thm3.1A}.

\par

\begin{thm}\label{kernelest}
Assume that $N$, $\chi$, $\omega$, $\omega _j$ for $j=0,1,2$, $v$ and
$\fy$ are the same as in Subsection \ref{ssec2.1}.
Also assume that $p\in [1,\infty ]$, and that one of the following
conditions hold:
\begin{enumerate}
\item $a\in \mathscr S'(\rr {N+m})$ and $\nmm a<\infty$, where
\begin{equation*}
\nmm a = \Big (\iint _{\rr N}\Big (\int _{\rr m}\sup _z\Big (\iint
_{\rr N}|V_\chi a(X,\xi ,\eta ,z)\omega (X,\xi ,\eta ,z)|^p\, d\xi
d\eta \Big )^{1/p}\, d\zeta \Big )^p\, dxdy\Big )^{1/p}\text ;
\end{equation*}

\vrum

\item $|\det (\fy ''_{\zeta ,\zeta })|\ge \mathsf d$ for some $\mathsf
d>0$, $a\in \mathscr S'(\rr {N+m})$, and $\nmm a<\infty$, where
\begin{equation*}
\nmm a = \Big (\iint _{\rr N}\Big (\int _{\rr m}\sup _\zeta \Big
(\iint _{\rr N}|V_\chi a(X,\xi ,\eta ,z)\omega (X,\xi ,\eta ,z)|^p\,
d\xi d\eta \Big )^{1/p}\, dz \Big )^p\, dxdy\Big )^{1/p}.
\end{equation*}
\end{enumerate}
Then the {\rm{(i)--(iii)}} in Subsection \ref{ssec2.2} hold.
\end{thm}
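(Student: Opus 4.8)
The plan is to prove \textrm{(i)} directly from the kernel estimate \eqref{fundest1} and then to read off \textrm{(ii)} and \textrm{(iii)} from it. Since $M^p_{(\omega _0)}=M^{p,p}_{(\omega _0)}$, Proposition \ref{p1.4}{\,}(1) (applied with the admissible window $\chi _1\otimes \chi _2$) identifies $\nm {K_{a,\fy}}{M^p_{(\omega _0)}}$ with the full $L^p(\rr {2N})$ norm of $\mathsf X\mapsto V_{\chi _1\otimes \chi _2}K_{a,\fy}(\mathsf X)\omega _0(\mathsf X)$. I would therefore apply the $L^p$ norm in $\mathsf X$ to the right-hand side of \eqref{fundest1}; as $F$ is continuous and rapidly decreasing, Fubini lets me interchange the $\zeta _1$-integration with the convolution and Young's inequality then removes the factor $F$, leaving
\[
\nm {K_{a,\fy}}{M^p_{(\omega _0)}}\le C\nm F{L^1}\, I,\qquad I\equiv \Big \Vert \mathsf X_1\mapsto \int _{\rr m}\mathcal R_{a,\omega ,\fy}(\mathsf X_1,\zeta _1)\, d\zeta _1\Big \Vert _{L^p(\rr {2N})}.
\]

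The heart of the matter is the bound $I\le C\mathsf d^{-1}\nm G{L^1_{(v)}}\nmm a$. First I would use Minkowski's integral inequality in the inner variables $(\xi ,\eta )$ to pull the $\zeta _1$-integration outside the $L^p_{\xi ,\eta}$ norm, reducing everything to estimating $\nm {\mathcal R_{a,\omega ,\fy}(x,y,\cdot ,\cdot ,\zeta _1)}{L^p_{\xi ,\eta}}$ for fixed $x,y,\zeta _1$ (write $X=(x,y,\zeta _1)$). A translation in $\xi ,\eta$ by $\fy '_x(X),\fy '_y(X)$ disposes of those shifts, and then, recalling from \eqref{Radef} that $\mathcal R$ is $(Gv)\ast \mathbf E_{a,\omega }(X,\cdot )$ evaluated in the $z$-slot at $-\fy '_\zeta (X)$, a second Minkowski inequality in the convolution variable $z'$ together with Young's inequality in $(\xi ,\eta )$ gives
\[
\nm {\mathcal R_{a,\omega ,\fy}(x,y,\cdot ,\cdot ,\zeta _1)}{L^p_{\xi ,\eta}}\le (H\ast P(X,\cdot ))(-\fy '_\zeta (X)),
\]
where $H(w)=\nm {(Gv)(\cdot ,\cdot ,w)}{L^1_{\xi ,\eta}}$ has $\int H=\nm G{L^1_{(v)}}$, and $P(X,z')=\big (\iint _{\rr N}\mathbf E_{a,\omega }(X,\xi ,\eta ,z')^p\, d\xi d\eta \big )^{1/p}$ is precisely the innermost factor of $\nmm a$.

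It remains to integrate in $\zeta _1$ and take the $L^p$ norm in $(x,y)$, and here the two hypotheses are used differently. Under condition (1) I would estimate $(H\ast P(X,\cdot ))(-\fy '_\zeta (X))\le \nm G{L^1_{(v)}}\sup _{z'}P(X,z')$ and integrate $d\zeta _1$ directly, which reproduces $\nm G{L^1_{(v)}}\nmm a$ with no determinant factor. Under condition (2) I would first bound $P(X,z')\le \sup _\zeta P(x,y,\zeta ,z')$, so that the integrand depends on $\zeta _1$ only through $-\fy '_\zeta (X)$, and then change variables $\zeta _1\mapsto z=-\fy '_\zeta (X)$; the Jacobian is $|\det \fy ''_{\zeta ,\zeta}|^{-1}\le \mathsf d^{-1}$, turning $\int d\zeta _1$ into $\int dz$ and producing $\mathsf d^{-1}\nm G{L^1_{(v)}}\nmm a$. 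In both cases Lemma \ref{lemmafourop33} supplies $\nm G{L^1_{(v)}}\le C\exp (C\nm {\fy ''}{M^{\infty ,1}_{(v)}})$, so \textrm{(i)} follows (with $\mathsf d=1$ in the first case), and the simultaneous absolute convergence of the integrals yields admissibility of $(a,\fy )$. Assertion \textrm{(ii)} is then immediate from the first part of Proposition \ref{fourop3A}, since $K_{a,\fy}\in M^p_{(\omega _0)}$ and the first line of \eqref{weightsineq} is exactly the weight relation \eqref{weight66}. For \textrm{(iii)} with $1\le p\le 2$ I would regard $\op _\fy (a)$ through its kernel in $M^p_{(\omega _0)}$ and interpolate between $p=2$, where $M^2_{(\omega _0)}=L^2_{(\omega _0)}$ by Remark \ref{p1.7}{\,}(1) gives a Hilbert--Schmidt operator, and $p=1$, where the kernel theorem behind Proposition \ref{pseudomod} gives a trace-class operator, using the complex interpolation \eqref{interpschatt} of Schatten classes and Proposition \ref{interpolmod}.

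The step I expect to be the main obstacle is the change of variables $\zeta _1\mapsto -\fy '_\zeta (X)$ in case (2): I must first absorb the residual $\zeta _1$-dependence of $P$ into $\sup _\zeta$ so that $-\fy '_\zeta (X)$ is the only remaining occurrence of $\zeta _1$, and then invoke $|\det \fy ''_{\zeta ,\zeta}|\ge \mathsf d$ to pass to $z$ as integration variable. Doing this while keeping the suprema and mixed norms in exact correspondence with the two prescribed forms of $\nmm a$ (supremum in $z$ with integration in $\zeta$ for (1), and the reverse for (2)) is the delicate bookkeeping that the whole argument hinges on.
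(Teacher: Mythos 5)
Your proposal is correct and follows essentially the same route as the paper's own proof: applying the $L^p$ norm to \eqref{fundest1}, removing $F$ by Young's inequality, using Minkowski and translation invariance in $(\xi ,\eta )$ to reduce to the mixed norm of $\mathbf E_{a,\omega}$, and then either bounding by $\sup _z$ directly (case (1)) or substituting $z=-\fy '_\zeta (X)$ with Jacobian controlled by $|\det \fy ''_{\zeta ,\zeta}|\ge \mathsf d$ (case (2)) — the only cosmetic difference being that you take the $\sup _\zeta$ before the change of variables where the paper changes variables first and then majorizes the composite $\kappa (x,y,z)$ by the supremum. Your derivations of (ii) from Proposition \ref{fourop3A} and of (iii) by Schatten interpolation between $p=1$ (trace class via the $M^1$ kernel) and $p=2$ (Hilbert--Schmidt via $M^2=L^2$) are exactly the arguments the paper leaves implicit behind its remark that it suffices to prove (i).
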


\par

\begin{proof}
It suffices to prove (i). We only consider the case when (2) is
fulfilled. The other case follows by similar arguments and is left for
the reader.

\par

Let $G$ be the same as in the proof of
Theorem \ref{boulkemA}, $\mathcal Q_{a,\omega}$ and $\mathcal
R_{a,\omega}$ be as in \eqref{Qadef} and \eqref{Radef}, and let
$$
\mathbf E _{a,\omega }(X,\xi ,\eta ,z) = |V_\chi a(X,\xi ,\eta ,z)\omega (X,\xi
,\eta ,z)|.
$$
It follows from \eqref{fundest1} and H{\"o}lder's inequality that
\begin{align*}
|(V_{\chi _1\otimes \chi _2}K_{a,\fy})(\mathsf X)\omega _0(\mathsf X
)| &\le C\iint _{\rr {m+2N}}\big ( \mathcal R_{a,\omega ,\fy}(\mathsf
X_1,\zeta _1)F(\mathsf X_1-\mathsf X)^{1/p}\big ) \, F(\mathsf
X_1-\mathsf X)^{1/p'}\, d\zeta _1d\mathsf X_1
\\[1ex]
&\le C\nm F{L^1}^{1/p'}\Big (\int _{\rr {2N}} \Big (\int \mathcal
R_{a,\omega ,\fy}(\mathsf X_1,\zeta _1)\, d\zeta _1\Big )^pF(\mathsf
X_1-\mathsf X)\, d\mathsf X_1\Big )^{1/p},
\end{align*}
where $\nm F{L^1}$ is finite, since $F$ is rapidly decreasing to zero
at infinity. By letting $C_\fy =C\exp (C\nm {\fy ''}{M^{\infty
,1}_{(v)}})$ for some large constant $C$, and applying the $L^p$ norm
and Young's inequality, we get
\begin{equation}\label{estagain1}
\begin{aligned}
&\nm K{M^p_{(\omega _0)}}^p \le C_1 \int  _{\rr {2N}}\Big ( \int _{\rr
m}\mathcal R_{a,\omega ,\fy}(\mathsf X,\zeta )\, d\zeta \Big
)^pd\mathsf X
\\[1ex]
&\le C_1\nm G{L^1_{(v)}}^p\int _{\rr {2N}} \Big (\int _{\rr m}\mathcal
Q_{a,\omega ,\fy}(\mathsf X,\zeta )\, d\zeta \Big )^pd\mathsf X
\\[1ex]
&\le  C_\fy \int _{\rr {2N}} \Big (\int _{\rr m} \mathbf E_{a,\omega
}(X,\xi -\fy '_x(X),\eta -\fy '_y(X),-\fy '_\zeta (X))\, d\zeta \Big
)^pd\mathsf X.
\end{aligned}
\end{equation}
for some constant $C_1$. It follows now from Minkowski's
inequality that the latter integral can be estimated by
\begin{multline*}
\iint _{\rr N} \Big (\int _{\rr m}\Big (\iint _{\rr N}\mathbf
E_{a,\omega }(X,\xi -\fy '_x(X),\eta -\fy '_y(X),-\fy '_\zeta (X))^p\,
d\xi d\eta \Big )^{1/p}\, d\zeta \Big )^p dxdy
\\[1ex]
= \iint _{\rr N} \Big (\int _{\rr m} \Big (\iint _{\rr N}\mathbf
E_{a,\omega }(X,\xi ,\eta ,-\fy '_\zeta (X))^p\, d\xi d\eta \Big
)^{1/p}\, d\zeta \Big )^p dxdy .
\end{multline*}
By letting $C_\fy =C_2\exp (C_2\nm {\fy ''}{M^{\infty ,1}_{(v)}})$,
taking $\xi ,\eta ,-\fy '_\zeta (X),x,y$ as new variables of
integration, and using the fact that $|\det (\fy ''_{\zeta ,\zeta
})|\ge \mathsf d$, we get for some function $\kappa$ that
\begin{align*}
\nm K{M^p_{(\omega _0)}}^p &\le \frac {C_\fy}{\mathsf d} \iint _{\rr
N} \Big (\int _{\rr m}\Big (\iint _{\rr N}\mathbf E_{a,\omega
}(x,y,\kappa (x,y,z),\xi ,\eta ,z)^p\, d\xi d\eta \Big )^{1/p}\, dz
\Big )^p dxdy
\\[1ex]
&\le \frac {C_\fy}{\mathsf d} \iint _{\rr N} \Big (\int _{\rr m} \sup
_{\zeta}\Big (\iint _{\rr N} \mathbf E_{a,\omega }(x,y,\zeta ,\xi
,\eta ,z)^p\, d\xi d\eta \Big )^{1/p}\, dz \Big )^p dxdy
\\[1ex]
&= C_\fy \nmm a^p .
\end{align*}
This proves the assertion
\end{proof}

\par

We also have the following result parallel to Theorem \ref{kernelest}. 

\par

\begin{thm}\label{thm3.1B}
Assume that $N$, $\chi$, $\omega$, $\omega _j$, $v$, $\fy$, $V_j$,
$V_j'$, $\boldsymbol \varrho$ $\boldsymbol \tau$ and $\boldsymbol u$
for $j=0,1,2$ are the same as in Subsection \ref{ssec2.1}. Also assume
that $p\in [1,\infty]$, $a\in \mathscr S'(\rr {N+m})$
fulfills $\nmm a<\infty$, where
\begin{equation*}
\nmm a = \int _{V_2'}\Big ( \iint _{V_1\times V_1'} \Big ( \sup
_{\boldsymbol \varrho \in V_2}|V_\chi a(X,\xi ,\eta ,z)\omega (X,\xi
,\eta ,z)|\Big )^p\, d\boldsymbol t d\boldsymbol \tau \Big )^{1/p}\,
d\boldsymbol u \, ,
\end{equation*}
and that $|\det (\fy ''_{\boldsymbol \varrho ,\zeta})|\ge \mathsf
d$. Then {\rm{(i)--(iii)}} in Subsection \ref{ssec2.2} hold.
\end{thm}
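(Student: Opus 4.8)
The plan is to reduce the three assertions to the kernel estimate (i) of Subsection \ref{ssec2.2} and then to prove (i) along the lines of Theorem \ref{kernelest}, but replacing its final change of variables by the one used in Theorem \ref{thm3.1A}.

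First I would show that (ii) and (iii) are consequences of (i), exactly as in the proof of Theorem \ref{kernelest}. Once (i) provides that the kernel $K_{a,\fy}$ of $\op _\fy (a)$ belongs to $M^p_{(\omega _0)}(\rr N)$ with the stated bound, the mapping property (ii) is immediate from Proposition \ref{fourop3A}, because the first inequality in \eqref{weightsineq} is precisely the hypothesis \eqref{weight66} with $\omega _0$ in place of $\omega$. For (iii), assume $1\le p\le 2$. Identifying $\op _\fy (a)$ with a pseudo-differential operator $b_t(x,D)$, Proposition \ref{pseudomod}{\,}(2) turns $K_{a,\fy}\in M^p_{(\omega _0)}$ into $b\in M^p_{(\omega )}=M^{p,p}_{(\omega )}$; since $p=\min (p,p')$ for $p\le 2$, Proposition \ref{pseudomod}{\,}(1) gives $b\in s_{t,p}(\omega _1,\omega _2)$, i.{\,}e. $\op _\fy (a)\in \mathscr I_p(M^2_{(\omega _1)},M^2_{(\omega _2)})$. (Alternatively, and without restriction on $n_1,n_2$, one interpolates between the endpoints $p=1$ and $p=2$ of (i) by means of \eqref{interpschatt} and Proposition \ref{interpolmod}.) Hence it suffices to establish (i).

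To prove (i) I would first repeat the opening of the proof of Theorem \ref{kernelest} verbatim. Starting from the fundamental estimate \eqref{fundest1} and writing the rapidly decreasing factor as $F=F^{1/p}F^{1/p'}$, H{\"o}lder's and Young's inequalities, together with the bound $\nm G{L^1_{(v)}}\le C\exp (C\nm {\fy ''}{M^{\infty ,1}_{(v)}})$ on the function $G$ from the proof of Theorem \ref{boulkemA}, reduce the claim to the inequality \eqref{estagain1}, namely
$$
\nm {K_{a,\fy}}{M^p_{(\omega _0)}}^p\le C_\fy \int _{\rr {2N}}\Big (\int _{\rr m}\mathbf E_{a,\omega }(X,\xi -\fy '_x(X),\eta -\fy '_y(X),-\fy '_\zeta (X))\, d\zeta \Big )^p\, d\mathsf X,
$$
with $C_\fy =C\exp (C\nm {\fy ''}{M^{\infty ,1}_{(v)}})$ and $\mathbf E_{a,\omega }$ as in \eqref{Efuncdef}. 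This step uses only $F\in L^1$ and is insensitive to the precise shape of $\nmm a$.

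The substantive part is the change of variables, which I would carry out as in Theorem \ref{thm3.1A} rather than as in Theorem \ref{kernelest}. Because $|\det (\fy ''_{\boldsymbol \varrho ,\zeta})|\ge \mathsf d$, for each fixed $\mathsf X$ the substitution $\zeta \mapsto -\fy '_{\boldsymbol \varrho }(X)$ is an admissible change of the $\zeta$-variable whose Jacobian has modulus at least $\mathsf d$; passing to the frequency variable $\boldsymbol u\in V_2'$ (which absorbs $-\fy '_{\boldsymbol \varrho }(X)$) therefore costs a factor $\mathsf d^{-1}$ and expresses $\boldsymbol \varrho$ as a function $\kappa _1$ of the remaining variables. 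After this inner substitution the $\zeta$-integral becomes $\int _{V_2'}\cdots \, d\boldsymbol u$, and I would invoke Minkowski's integral inequality (valid since $p\ge 1$) to interchange the outer $L^p$-norm in $\mathsf X$ with this $L^1$-integral in $\boldsymbol u$, producing the ordering ``$L^1(V_2')$ of $L^p(V_1\times V_1')$'' demanded by $\nmm a$. Finally, changing the remaining variables $\mathsf X\mapsto (\boldsymbol t,\boldsymbol \tau )\in V_1\times V_1'$ and dominating the leftover dependence on $\kappa _1$ by the supremum over $\boldsymbol \varrho \in V_2$ yields
$$
\nm {K_{a,\fy}}{M^p_{(\omega _0)}}\le C\mathsf d^{-1}\exp (C\nm {\fy ''}{M^{\infty ,1}_{(v)}})\, \nmm a ,
$$
which is (i).

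The step I expect to be the main obstacle is precisely this last change of variables for the general, non-coordinate subspaces $V_1,V_2$: one has to verify that, after replacing $\zeta$ by $-\fy '_{\boldsymbol \varrho }(X)$, the measure $d\mathsf X\, d\zeta$ factorizes (up to the single factor $\mathsf d^{-1}$) as $d\boldsymbol t\, d\boldsymbol \tau \, d\boldsymbol u$ while $\boldsymbol \varrho$ is carried along as $\kappa _1$, and that the supremum over $\boldsymbol \varrho$ dominates the $\kappa _1$-dependence uniformly. This bookkeeping, combined with the use of Minkowski's inequality to reconcile the plain $L^p$-norm of the kernel with the mixed norm $\nmm a$, is the only genuinely delicate point; all analytic estimates are identical to those already used for Theorems \ref{boulkemA}, \ref{thm3.1A} and \ref{kernelest}.
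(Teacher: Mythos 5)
Your reduction of (ii) and (iii) to the kernel estimate (i) agrees with the paper's implicit structure (the paper likewise reduces everything to (i), with Proposition \ref{fourop3A} giving (ii) and the Schatten machinery of Subsection \ref{ssec1.3} giving (iii)), but for (i) itself you take a genuinely different route. The paper never attempts a direct estimate for $1<p<\infty$: it proves the endpoint $p=1$ via Proposition \ref{kernelest11} — where, after \eqref{estagain1}, every variable sits inside a pure $L^1$-integral, so Fubini permits first the substitution $\boldsymbol \varrho \mapsto -\fy '_\zeta (X)$ (costing the single factor $\mathsf d^{-1}$) and then a \emph{free} rotation of the frequency variables $(\xi ,\eta ,z)$ into $(\boldsymbol \tau ,\boldsymbol u)$ — and the endpoint $p=\infty$ via Theorem \ref{thm3.1A}, and then gets all intermediate $p$ by complex interpolation, using Proposition \ref{coorbintepol} on the coorbit scale $\boldsymbol \Theta ^{(\infty ,p,p,1)}_{(\omega )}$ together with interpolation of the spaces $M^p_{(\omega _0)}$. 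Your plan instead performs the change of variables inside a genuinely mixed norm ($L^1$ in $\boldsymbol u$ outside, $L^p$ in $(\boldsymbol t,\boldsymbol \tau )$ inside), which is \emph{not} invariant under the rotations and rearrangements that are free at the endpoints; this is precisely why the paper interpolates. If your route is completed it buys something the paper's does not: an explicit bound $C\mathsf d^{-1}\exp (C\nm {\fy ''}{M^{\infty ,1}_{(v)}})\nmm a$ for every $p$ in one stroke, without the retract/interpolation theory of coorbit spaces.

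The step you defer is, however, more than bookkeeping, and as written it is the one real gap. The hypothesis $|\det (\fy ''_{\boldsymbol \varrho ,\zeta})|\ge \mathsf d$ controls only the inner $m$-dimensional substitution $\zeta \mapsto z'=-\fy '_{\boldsymbol \varrho}(X)$; note also that for Minkowski the outer variable must be $z'$ rather than $\boldsymbol u$ itself, since the $V_2'$-component of the frequency argument equals $P_{V_2'}(\xi ,\eta ,0)+z'$ and still moves with $(\xi ,\eta )$, so the $(\xi ,\eta )$-dependent shift has to be folded into the final change of variables. That final map $(x,y,\xi ,\eta )\mapsto (\boldsymbol t,\boldsymbol \tau )$, with $\zeta =\kappa _1$ carried along, has a Jacobian built from derivatives of $\kappa _1$, i.{\,}e. from $(\fy ''_{\boldsymbol \varrho ,\zeta})^{-1}$ composed with the \emph{other} blocks of $\fy ''$ (for instance $\fy ''_{\boldsymbol \varrho ,\boldsymbol \varrho}$-type blocks), and no hypothesis of the theorem bounds it from below. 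It is in fact identically of modulus one: a Schur-complement computation using the symmetry of the Hessian shows that all blocks of $\fy ''$ cancel and the determinant collapses to $\pm \det (\fy ''_{\boldsymbol \varrho ,\zeta})^{-1}\det (\fy ''_{\zeta ,\boldsymbol \varrho})=\pm 1$ — one can check this directly in the coordinate situations of Corollary \ref{cor3.1A} and \eqref{vectspcond}, and the identity persists for rotated $V_1,V_2$ — so only the single $\mathsf d^{-1}$ from the inner substitution survives and your conclusion is correct. But this unimodularity identity (together with the usual tacit injectivity of the substitutions, which the paper also glosses) is the actual mathematical content of your last paragraph, and your proof is incomplete until it is stated and proved; the paper's endpoint-plus-interpolation argument is designed exactly so that it is never needed.
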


\par

We note that the norm estimate on $a$ in Theorem \ref{thm3.1B} means
that $a\in \boldsymbol \Theta _{(\omega )}^{\mathsf p}(\overline V)$
with $\mathsf p=(\infty ,p,p,1)$ and $\overline
V=(V_2,V_1,V_1',V_2')$. The proof of Theorem \ref{thm3.1B} is based on
Theorem \ref{thm3.1A} and the following result which generalizes
Theorem \ref{kernelest} in the case $p=1$.

\par

\begin{prop}\label{kernelest11}
Assume that $N$, $\chi$, $\omega$, $\omega _j$, $v$, $\fy$, $V_j$,
$V_j'$, $\boldsymbol \varrho$ $\boldsymbol \tau$ and $\boldsymbol u$
for $j=0,1,2$ are the same as in Subsection \ref{ssec2.1}. Also assume
that $a\in \mathscr S'(\rr {N+m})$ satisfies $\nmm a <\infty$, where
\begin{equation*}
\nmm a = \iint _{\rr m\times V_1}\essup {\boldsymbol \varrho \in
V_2}\Big ( \iint _{\rr N} |V_\chi a(X,\xi ,\eta ,z)\omega (X,\xi ,\eta
,z)|\, d\xi d\eta \Big )\, d\boldsymbol tdz \, ,
\end{equation*}
and that $|\det (\fy ''_{\boldsymbol \varrho ,\zeta})|\ge \mathsf
d$. Then {\rm{(i)--(iii)}} in Subsection \ref{ssec2.2} hold for
$p=1$.
\end{prop}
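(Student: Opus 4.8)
The plan is to mimic the proof of Theorem~\ref{kernelest} with $p=1$, the only genuinely new feature being that the non-degeneracy is now imposed on the mixed Hessian $\fy''_{\boldsymbol\varrho,\zeta}$ relative to the general splitting $\rr{N+m}=V_1\oplus V_2$ instead of on $\fy''_{\zeta,\zeta}$. As in that theorem, it suffices to establish assertion (i), namely the kernel bound $\nm{K_{a,\fy}}{M^1_{(\omega_0)}}\le C\mathsf d^{-1}\exp(C\nm{\fy''}{M^{\infty,1}_{(v)}})\nmm a$. Indeed, (ii) then follows from (i) by Proposition~\ref{fourop3A} with $p=1$ (so $p'=\infty$), since the first line of \eqref{weightsineq} supplies precisely the weight relation \eqref{weight66} for $\omega_0$; and (iii) for $p=1$ follows from (i) as in the proof of Theorem~\ref{kernelest}, because a linear operator whose distribution kernel lies in $M^1_{(\omega_0)}$ with this weight relation is trace class from $M^2_{(\omega_1)}$ to $M^2_{(\omega_2)}$ (when $n_1=n_2$ this is immediate from Proposition~\ref{pseudomod} together with Remark~\ref{p1.7}(1)).

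To prove (i) I would start from the fundamental pointwise estimate \eqref{fundest1} for $V_{\chi_1\otimes\chi_2}K_{a,\fy}$, apply the $L^1$-norm in $\mathsf X$, and use Fubini together with $F\in L^1$ to integrate out the shift variable; exactly as in \eqref{estagain1} specialized to $p=1$, this gives
\[
\nm{K_{a,\fy}}{M^1_{(\omega_0)}}\le C\int_{\rr{2N}}\int_{\rr m}\mathcal R_{a,\omega,\fy}(\mathsf X,\zeta)\,d\zeta\,d\mathsf X .
\]
Young's inequality applied to the convolution $(Gv)\ast$ in the definition \eqref{Radef} of $\mathcal R_{a,\omega,\fy}$ replaces it by $\mathcal Q_{a,\omega}$ from \eqref{Qadef}, at the cost of the factor $\nm G{L^1_{(v)}}\le C\exp(C\nm{\fy''}{M^{\infty,1}_{(v)}})$, where $G$ is the function produced in the proof of Theorem~\ref{boulkemA}. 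After the translation $(\xi,\eta)\mapsto(\xi-\fy'_x(X),\eta-\fy'_y(X))$, whose Jacobian is one, and writing $X=(\boldsymbol t,\boldsymbol\varrho)_{V_1\oplus V_2}$ so that $dx\,dy\,d\zeta=d\boldsymbol t\,d\boldsymbol\varrho$, I am reduced to
\[
\nm{K_{a,\fy}}{M^1_{(\omega_0)}}\le C_\fy\int_{\rr N}\int_{V_1}\int_{V_2}\mathbf E_{a,\omega}\big((\boldsymbol t,\boldsymbol\varrho),\xi,\eta,-\fy'_\zeta(\boldsymbol t,\boldsymbol\varrho)\big)\,d\boldsymbol\varrho\,d\boldsymbol t\,d\xi\,d\eta ,
\]
with $C_\fy=C\exp(C\nm{\fy''}{M^{\infty,1}_{(v)}})$.

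The decisive step is the change of variables in the $V_2$-block. For fixed $\boldsymbol t\in V_1$ I would substitute $z=-\fy'_\zeta(\boldsymbol t,\boldsymbol\varrho)$; since $|\det\fy''_{\boldsymbol\varrho,\zeta}|\ge\mathsf d$, this map has Jacobian bounded below by $\mathsf d$, so $d\boldsymbol\varrho\le\mathsf d^{-1}\,dz$ and $\boldsymbol\varrho=\kappa(\boldsymbol t,z)$ for a continuous $\kappa$. The crucial observation is that after this substitution $\mathbf E_{a,\omega}$ is evaluated at the \emph{single} point $\boldsymbol\varrho=\kappa(\boldsymbol t,z)$, so for each fixed $(\boldsymbol t,z)$ the inner $(\xi,\eta)$-integral is dominated by $\essup{\boldsymbol\varrho\in V_2}\iint_{\rr N}\mathbf E_{a,\omega}((\boldsymbol t,\boldsymbol\varrho),\xi,\eta,z)\,d\xi\,d\eta$. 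This places the essential supremum \emph{outside} the frequency integral, matching the definition of $\nmm a$ exactly; integrating in $z$ and $\boldsymbol t$ then yields $\nm{K_{a,\fy}}{M^1_{(\omega_0)}}\le C_\fy\mathsf d^{-1}\nmm a$, which is (i).

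The main obstacle is precisely this last manoeuvre. One must run the substitution in the $\boldsymbol\varrho$-variable, so that the non-degeneracy is used on the correct $V_2$-block, and simultaneously arrange for the supremum over $\boldsymbol\varrho$ to sit outside the $(\xi,\eta)$-integral: had the supremum been forced inside, the inequality $\iint\sup\ge\sup\iint$ would point the wrong way, and it is the evaluation at the single preimage $\kappa(\boldsymbol t,z)$ that legitimizes the bound by $\essup{\boldsymbol\varrho\in V_2}(\iint\,d\xi\,d\eta)$. A secondary technical point, namely justifying the global substitution from the mere lower bound on $|\det\fy''_{\boldsymbol\varrho,\zeta}|$, is handled exactly as in the proofs of Theorems~\ref{boulkemA}, \ref{thm3.1A} and~\ref{kernelest}.
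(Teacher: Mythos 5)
Your proposal is correct and follows essentially the same path as the paper's own proof: the paper likewise specializes \eqref{estagain1} to $p=1$ (with Young's inequality absorbing the $(Gv)\ast$ factor into $C_\fy$), translates in $(\xi ,\eta )$, rewrites $dX = d\boldsymbol t\, d\boldsymbol \varrho$, takes $\boldsymbol t$ and $z=-\fy '_\zeta (X)$ as new variables using $|\det (\fy ''_{\boldsymbol \varrho ,\zeta})|\ge \mathsf d$, and then bounds the evaluation at the single preimage by $\sup _{\boldsymbol \varrho \in V_2}$ of the $(\xi ,\eta )$-integral, which is exactly the manoeuvre you single out as decisive. Your reduction of (ii) and (iii) to the kernel bound (i) via Propositions \ref{fourop3A} and \ref{pseudomod} also matches the framework the paper relies on implicitly when it states that it suffices to prove (i).
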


\par

\begin{proof}
We use the same notations as in Subsection \ref{ssec2.1} and the proof
of Theorem \ref{boulkemA}. It follows from \eqref{estagain1} that
\begin{multline*}
\nm K{M^1_{(\omega _0)}} \le C_\fy \iint _{\rr {2N+m}}
\mathbf E_{a,\omega }(X,\xi -\fy '_x(X),\eta -\fy '_y(X),-\fy
'_\zeta (X))\, d\zeta d\mathsf X
\\[1ex]
=C_\fy \iint _{\rr {2N+m}}
\mathbf E_{a,\omega }(X,\xi ,\eta ,-\fy
'_\zeta (X))\, d\zeta d\mathsf X
\\[1ex]
=C_1C_\fy \iint _{V_1\times V_2}\Big ( \iint _{\rr N}\mathbf E_{a,\omega
}(X,\xi ,\eta ,-\fy '_\zeta (X))\, d\xi d\eta
\Big ) d\boldsymbol td\boldsymbol \varrho .
\end{multline*}
By taking $\boldsymbol t$ and $-\fy '_\zeta$ as new variables of
integration in the outer doubble integral, and using the fact that
$|\det (\fy ''_{\boldsymbol \varrho ,\zeta })|\ge \mathsf d$, we get
\begin{multline*}
\nm K{M^1_{(\omega _0)}} \le C_1C_\fy {\mathsf d}^{-1}\int _{\rr
m}\Big (\int _{V_1}\Big ( \iint _{\rr N} \mathbf E_{a,\omega }(X,\xi
,\eta ,z)\, d\xi d\eta \Big ) d\boldsymbol t\Big )dz
\\[1ex]
\le C_1C_\fy {\mathsf d}^{-1}\int _{\rr m}\Big (\int
_{V_1}\sup _{\boldsymbol \varrho \in V_2}\Big ( \iint _{\rr N}\mathbf
E_{a,\omega }(X,\xi ,\eta ,z)\, d\xi d\eta \Big ) d\boldsymbol t\Big
)dz
\\[1ex]
= C_1C_\fy {\mathsf d}^{-1}\nmm a.
\end{multline*}
This proves the result.
\end{proof}

\par

\begin{proof}[Proof of Theorem \ref{thm3.1B}.]
We start to consider the case $p=1$. By Proposition \ref{kernelest11}
(i), Minkowski's inequality and substitution of variables we obtain
\begin{multline*}
\nm K{M^1_{(\omega _0)}}\le C_\fy {\mathsf d}^{-1}\int _{\rr m}\Big
(\int _{V_1}\sup _{\boldsymbol \varrho \in V_2}\Big ( \iint _{\rr
N}\mathbf E_{a,\omega }(X,\xi ,\eta ,z)\, d\xi d\eta \Big )
d\boldsymbol t\Big )dz
\\[1ex]
\le C_\fy {\mathsf d}^{-1}\iiiint _{V_1\times \rr {N+m}}\essup
{\boldsymbol \varrho \in V_2} \mathbf E_{a,\omega }(X,\xi ,\eta ,z)\,
d\boldsymbol t d\xi d\eta dz
\\[1ex]
= C_1C_\fy \int _{V_2'}\Big (\iint _{V_1\times V_1'} \essup
{\boldsymbol \varrho \in V_2} \mathbf E_{a,\omega }(X,\xi ,\eta ,z) \,
d\boldsymbol t d\boldsymbol \tau \Big )\, d\boldsymbol u ,
\end{multline*}
for some constant $C_1$, and the result follows in this case.

\par

Next we consider the case $p=\infty$. By Theorem \ref{thm3.1A} we get
\begin{multline*}
\nm K{M^\infty _{(\omega _0)}}\le C_\fy \sup _{x,y}\Big ( \int _{V_2'}
\sup _{\zeta ,\boldsymbol \tau} \big (\mathbf E_{a,\omega }(X,\xi
,\eta ,z)\big )\, d\boldsymbol u\Big )
\\[1ex]
\le C_\fy \int _{V_2'} \Big (\essup {({\boldsymbol t},{\boldsymbol
\tau})\in V_1\times V_1'}\big ( \sup _{\boldsymbol \varrho \in V_2}
\mathbf E_{a,\omega }(X,\xi ,\eta ,z)\, \big )\Big )\, d\boldsymbol u ,
\end{multline*}
and the result follows in this case as well.

\par

The theorem now follows for general $p$ by interpolation, using
Proposition \ref {coorbintepol}. The proof is complete.
\end{proof}

\par

By interpolating Theorem \ref{boulkemA} and Theorem \ref{thm3.1B} we
get the following result.

\par

\begin{thm}\label{thm3.1C}
Assume that $N$, $\chi$, $\omega$, $\omega _j$, $v$, $\fy$, $V_j$,
$V_j'$, $\boldsymbol \varrho$ $\boldsymbol \tau$ and $\boldsymbol u$
for $j=0,1,2$ are the same as in Subsection \ref{ssec2.1}. Also assume
that $p,q\in [1,\infty]$, $a\in \mathscr
S'(\rr {N+m})$ fulfills $\nmm a<\infty$, where
\begin{equation*}
\nmm a = \int _{V_2'}\Big ( \int _{V_1'} \Big ( \int _{V_1} \Big (
\sup _{\boldsymbol \varrho \in V_2}|V_\chi a(X,\xi ,\eta ,z)\omega
(X,\xi ,\eta ,z)|\Big )^p\, d\boldsymbol t \Big )^{q/p} d\boldsymbol
\tau \Big )^{1/q}\, d\boldsymbol u \, ,
\end{equation*}
and that in addition $n_1=n_2$ and \eqref{detphicond} and $|\det (\fy ''_{\boldsymbol \varrho ,\zeta})|\ge \mathsf d$ hold for some $\mathsf d>0$. Then the following is true:
\begin{enumerate}
\item if $p'\le q\le p$ and $p_1,p_2\in [1,\infty ]$ satisfy
\begin{equation}\label{pjqcond}
q\le p_1',p_2 \le p,\qquad \text{and}\quad \frac 1{p_1'}+\frac 1{p_2} =\frac 1p+\frac 1q,
\end{equation}
with strict inequalities in \eqref{pjqcond}, then the definition of $\op _\fy (a)$ extends uniquely to a continuous map from $M^{p_1}_{(\omega _1)}$ to $M^{p_2}_{(\omega _2)}$;

\vrum

\item if $q\le \min (p,p')$, then $\op _\fy (a)\in
\mathscr I_p(M^2_{(\omega _1)},M^2_{(\omega _2)})$.
\end{enumerate}
\end{thm}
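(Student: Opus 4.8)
The plan is to read the hypothesis $\nmm a<\infty$ as membership in the coorbit space $\boldsymbol \Theta _{(\omega )}^{(\infty ,p,q,1)}(\overline V)$ with $\overline V=(V_2,V_1,V_1',V_2')$, and then to obtain both assertions by complex interpolation between Theorem \ref{thm3.1B} and Theorem \ref{boulkemA}. The key is that Theorem \ref{thm3.1B} is exactly the diagonal case $q=p$, i.e. the amplitude space $\boldsymbol \Theta _{(\omega )}^{(\infty ,p,p,1)}(\overline V)$, while the Sj{\"o}strand class $M^{\infty ,1}_{(\omega )}$ of Theorem \ref{boulkemA} coincides with $\boldsymbol \Theta _{(\omega )}^{(\infty ,\infty ,1,1)}(\overline V)$ over the same $\overline V$ (a modulation space being insensitive to the splitting of variables). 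Since the weights $\omega$, $\omega _1$, $\omega _2$, $v$ are the fixed data of Subsection \ref{ssec2.1} and enter identically at both endpoints, no interpolation of weights is needed; and because both non-degeneracy hypotheses \eqref{detphicond} and $|\det (\fy ''_{\boldsymbol \varrho ,\zeta})|\ge \mathsf d$ are assumed, each endpoint theorem applies.

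For the continuity statement (1) I would interpolate the bilinear map $(a,f)\mapsto \op _\fy (a)f$. Given $p,q$ with $p'\le q\le p$ and $p_1,p_2$ as in \eqref{pjqcond}, put $\theta =1/q-1/p$ and choose $s\in [1,\infty ]$ and $t\in (1,\infty )$ by $(1-\theta )/s=1/p$ and $\theta /t=1/p_2-1/p$. By Theorem \ref{thm3.1B} (with parameter $s$) the map is bounded from $\boldsymbol \Theta _{(\omega )}^{(\infty ,s,s,1)}(\overline V)\times M^{s'}_{(\omega _1)}$ to $M^{s}_{(\omega _2)}$, and by Theorem \ref{boulkemA} (with parameter $t$) from $\boldsymbol \Theta _{(\omega )}^{(\infty ,\infty ,1,1)}(\overline V)\times M^{t}_{(\omega _1)}$ to $M^{t}_{(\omega _2)}$. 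Bilinear complex interpolation at $\theta$ (cf. \cite{BL}), with the interpolation spaces identified through Proposition \ref{coorbintepol}(1) on the amplitude factor and Proposition \ref{interpolmod} on the domain and target factors, then yields boundedness from $\boldsymbol \Theta _{(\omega )}^{(\infty ,p,q,1)}(\overline V)\times M^{p_1}_{(\omega _1)}$ to $M^{p_2}_{(\omega _2)}$. The two defining relations force the interpolated amplitude indices to be $(\infty ,p,q,1)$ and the target exponent to be $p_2$, while the domain exponent comes out as the prescribed $p_1$ precisely because the induced identity $1/p_1'=1/q+1/p-1/p_2$ is \eqref{pjqcond}.

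The main obstacle is admissibility of the endpoint parameters, and this is exactly where the strict inequalities in \eqref{pjqcond} enter: $q<p$ yields $\theta \in (0,1)$; the relation $(1-\theta )/s=1/p$ gives $s\in [2,p]\subseteq [1,\infty ]$ (with $s=\infty$ read off when $p=\infty$); and $t\in (1,\infty )$ holds because $p_2<p$ forces $t<\infty$ while $q<p_2$ forces $t>1$, so that Theorem \ref{boulkemA} indeed applies. Strictness moreover gives $1<p_1,p_2<\infty$, whence the modulation spaces coincide with their Schwartz completions $\mathcal M$ and Proposition \ref{interpolmod} applies verbatim; and since $\op _\fy (a)$ is given at both endpoints by the same formula \eqref{fourrel}, the interpolated operator is consistent.

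For the Schatten statement (2) I would split on $p$ and interpolate the linear map $a\mapsto \op _\fy (a)$ into the Schatten scale. If $1\le p\le 2$ then $\min (p,p')=p$, and $q\le p$ gives the embedding $\boldsymbol \Theta _{(\omega )}^{(\infty ,p,q,1)}(\overline V)\hookrightarrow \boldsymbol \Theta _{(\omega )}^{(\infty ,p,p,1)}(\overline V)$ by Proposition \ref{p1.4AA}(2), so the conclusion $\op _\fy (a)\in \mathscr I_p(M^2_{(\omega _1)},M^2_{(\omega _2)})$ is already Theorem \ref{thm3.1B}(iii). If $p\ge 2$ then $\min (p,p')=p'$; here I interpolate Theorem \ref{thm3.1B}(iii) with parameter $2$ (mapping $\boldsymbol \Theta _{(\omega )}^{(\infty ,2,2,1)}(\overline V)$ into $\mathscr I_2(M^2_{(\omega _1)},M^2_{(\omega _2)})$) against Theorem \ref{boulkemA} with parameter $2$ (mapping $\boldsymbol \Theta _{(\omega )}^{(\infty ,\infty ,1,1)}(\overline V)=M^{\infty ,1}_{(\omega )}$ into $\mathscr I_\infty (M^2_{(\omega _1)},M^2_{(\omega _2)})$, i.e. $M^2\to M^2$ boundedness). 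Complex interpolation at $\theta =1-2/p$, using Proposition \ref{coorbintepol}(1) to identify the amplitude space and \eqref{interpschatt} for the Schatten scale, gives $a\mapsto \op _\fy (a)$ bounded from $\boldsymbol \Theta _{(\omega )}^{(\infty ,p,p',1)}(\overline V)$ into $\mathscr I_p$; finally $q\le p'$ and Proposition \ref{p1.4AA}(2) give $\boldsymbol \Theta _{(\omega )}^{(\infty ,p,q,1)}(\overline V)\hookrightarrow \boldsymbol \Theta _{(\omega )}^{(\infty ,p,p',1)}(\overline V)$, which completes the proof.
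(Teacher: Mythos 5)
Your proposal is correct and follows essentially the same route as the paper: both read the hypothesis as membership in the coorbit space $\boldsymbol \Theta _{(\omega )}^{(\infty ,p,q,1)}(\overline V)$ with $\overline V=(V_2,V_1,V_1',V_2')$ and deduce (1) by bilinear complex interpolation between Theorem \ref{thm3.1B} and Theorem \ref{boulkemA} (via Propositions \ref{interpolmod} and \ref{coorbintepol}), and (2) by interpolating the cases $(p,q)=(2,2)$ and $(p,q)=(\infty ,1)$ in the Schatten scale using \eqref{interpschatt}. The only difference is organizational: the paper performs the interpolation in (1) in two stages (first settling $q=p'$ by interpolating Theorem \ref{boulkemA} against the case $(2,2)$, then interpolating between $q=p'$ and $q=p$), whereas you collapse this into a single interpolation with moving endpoints $s=p+1-p/q\in [2,p]$ and $t\in (1,\infty )$, whose admissibility you correctly trace to the strict inequalities in \eqref{pjqcond}.
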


\par

We note that the norm estimate on $a$ in Theorem \ref{thm3.1B} means
that $a\in \boldsymbol \Theta _{(\omega )}^{\mathsf p}(\overline V)$
with $\mathsf p=(\infty ,p,q,1)$ and $\overline
V=(V_2,V_1,V_1',V_2')$. 

\par

\begin{proof}
In order to prove (1) we note that the result holds when $(p,q)=(\infty ,1)$ or $q=p$, in view of Theorems \ref{boulkemA} and \ref{thm3.1B}. Next assume that $q=p'$ for $p\ge 2$, and set $\mathsf p_1=(\infty ,\infty,1,1)$ and $\mathsf p=(\infty ,2,2,1)$. Then it follows from Theorem \ref{boulkemA} and \ref{thm3.1B} that the bilinear form
$$
T(a,f) \equiv \op _\fy (a)f
$$
is continuous from
\begin{alignat*}{3}
&\boldsymbol \Theta _{(\omega )}^{\mathsf p_1}\times M^{p}_{(\omega _1)}&\quad &\text{to}&\quad  &M^{p}_{(\omega _2)},\quad 1<p<\infty,
\intertext{and from}
&\boldsymbol \Theta _{(\omega )}^{\mathsf p_2}\times M^{2}_{(\omega _1)} &\quad &\text{to}&\quad  &M^{2}_{(1/\omega _2)}.
\end{alignat*}
By interpolation, using Theorem 4.4.1 in \cite{BL}, Proposition \ref{interpolmod} and Proposition \ref{coorbintepol}, it follows that if $q=p'<2$, then $T$ extends uniquely to a continuous map from
$$
\boldsymbol \Theta _{(\omega )}^{\mathsf p}\times M^{p_1}_{(\omega _1)}\quad \text{to}\quad  M^{p_2}_{(\omega _2)},
$$
when $p'<p_1=p_2<p$. This proves (1) when $q=p$ or $q=p'$.

\par

For $q\in (p',p)$, the result now follows by interpolation between the case $q=p'$ and $p_1=p_2=p_0$ where $p'<p_0<p$, and the case $q=p$ and $p_1'=p_2=p$. In fact, by interpolation it follows that $T$ extens to a continuous map from
$$
\boldsymbol \Theta _{(\omega )}^{\mathsf p}\times M^{p_1}_{(\omega _1)}\quad \text{to}\quad  M^{p_2}_{(\omega _2)}
$$
when
$$
\frac 1q=\frac {1-\theta}{p'}+\frac {\theta}{p},\quad \frac 1{p_1}=\frac {1-\theta}{p_0}+\frac {\theta}{p'},\quad \frac 1{p_2}=\frac {1-\theta}{p_0}+\frac {\theta}{p}.
$$
It is now straight-forward to control that these conditions are equivalent with those conditions in (1), and the assertion follows.

\par

In order to prove (2), it is no restriction to assume that $q=\min (p,p')$. If $p=\infty$ and $q=1$, then the
result is a consequence of Theorem \ref{boulkemA}. If instead $1\le
q=p\le 2$, then the result follows from Theorem \ref{thm3.1B}. The
remaining case $2\le p=q'\le \infty$ now follows by
interpolation between the cases $(p,q)=(2,2)$ and $(p,q)=(\infty ,1)$,
using \eqref{interpschatt} or \eqref{realinterpschatt}, and the interpolation properties in
Section \ref{ssec1.2}. The proof is complete.
\end{proof}

\par

%%%%%%%%%%%%%%%%%%%%%%%%%%%%%%%%%%%%%%%%%%%%%%%%%%%
\section{Consequences}\label{sec3}
%%%%%%%%%%%%%%%%%%%%%%%%%%%%%%%%%%%%%%%%%%%%%%%%%%%

\par

In this section we list some consequences of the results in Section
\ref{sec2}. In Subsection \ref{ssec3.1} we consider Fourier integral
operators where the amplitudes depend on two variables only. In
Subsection \ref{ssec3.2} we consider Fourier integral operators with
smooth amplitudes.

\par

%%%%%
\subsection{Fourier integral operators with amplitudes depending on
two variables}\label{ssec3.1}
%%%%%
We start to discuss Schatten-von Neumann operators for Fourier
integral operators with symbols in $M^{p,q}_{(\omega)}(\rr{2n})$ and
phase functions in $M^{\infty,1}_{(v)}(\rr {3n})$, for appropriate
weight functions $\omega$ and $v$. We assume here that the phase
functions depend on $x, y, \zeta \in \rr n$ and that the amplitudes
only depend on the $x$ and $\zeta$ variables and are
independent of the $y$ variable. Note that here we have assumed that
the numbers $n_1$, $n_2$ and $m$ in Section \ref{sec2} are equal to
$n$. As in the preivous section, we use the notation $X, Y, Z,\dots$
for tripples of the form $(x,y,\zeta )\in \rr {3n}$.

\par

The first aim is to establish a weighted version of Theorem 2.5 in
\cite {CT2}. To this purpose, we need to transfer the conditions for
the weight and phase functions from Section \ref{sec2}. Namely here
and in the following we assume that $\fy \in C(\rr {3n})$, $\omega
_0,\omega \in \mathscr P(\rr {4n})$, $v_1\in \mathscr P(\rr n)$,
$v_2\in \mathscr P(\rr {2n})$ and $v\in \mathscr P(\rr {6n})$. A
condition on the phase function is
\begin{equation}\label{phasecond}
|\det(\fy ''_{y,\zeta }(X))|\ge \mathsf d,\qquad X=(x,y,\zeta )\in
\rr {2n}
\end{equation}
for some constant $\mathsf d>0$, and the conditions in
\eqref{weightsineq} in Subsection \ref{ssec2.1} are modified into:
\begin{equation}\tag*{(\ref{weightsineq})$'$}
\begin{aligned}
\omega _0(x,y,\xi  ,\varphi '_y(X)) &\le C\omega (x,\zeta,\xi-\varphi
'_x(X),-\varphi '_\zeta (X)),
\\[1ex]
\frac {\omega _2(x,\xi )}{\omega _1(y,-\eta )} &\le C\omega _0(x,y,\xi
, \eta),
\\[1ex]
\omega _0(x,y,\xi , \eta _1+\eta _2) &\le  C\omega _0(x,y,\xi ,\eta _1
)v_1(\eta _2),
\\[1ex]
\omega (x,\zeta ,\xi _1+\xi _2,z_1+z_2) &\le \omega (x,\zeta ,\xi
_1,z_1)v_2(\xi _2,z_2),
\\[1ex]
v(X,\xi ,\eta ,z) &= v_1(\eta )v_2(\xi ,z),\qquad x,y,z,
z_j,\xi ,\xi _j ,\eta ,\zeta \in \rr n.
\end{aligned}
\end{equation}

\par

For conveniency we also set $\op _{1,0,\fy} (a) =\op _{\fy}(a_1)$ when $a_1(x,y,\zeta )=a(x,\zeta )$.

\par

\begin{thm}\label{fourop3}
Assume that $p\in [1,\infty ]$, $\mathsf d >0$, $v\in \mathscr P(\rr
{6n})$ is submultiplicative, $\omega _0,\omega \in \mathscr P(\rr
{4n})$ and that $\varphi \in C(\mathbf {R}^{3n})$ are such that $\fy$ is
real-valued, $\varphi ^{(\alpha )}\in M ^{\infty,1}_{(v)}$ for all
multi-indices $\alpha$ such that $|\alpha | =2$, and \eqref{phasecond}
and \eqref{weightsineq}$'$ are fulfilled for some constant $C$. Then the following is true:
\begin{enumerate}
\item the map
$$
a\mapsto K_{a,\fy}(x,y)\equiv \int a(x, \zeta )e^{i\varphi (x,y,\zeta )}\,
d\zeta ,
$$
from $\mathscr S(\rr {2n})$ to $\mathscr S'(\rr {2n})$ extends
uniquely to a continuous map from $M^p_{(\omega )}(\rr {2n})$ to
$M^p_{(\omega _0)}(\rr {2n})$;

\vrum

\item the map $a\mapsto \op _{1,0,\fy}(a)$ from $\mathscr S(\rr {2n})$ to $\mathscr L(\mathscr S(\rr n),\mathscr S'(\rr n))$ extends uniquely to a continuous map from $M^p_{(\omega )}(\rr {2n})$ to $\mathscr L(\mathscr S(\rr n),\mathscr S'(\rr n))$;

\vrum

\item if $a\in M^p_{(\omega )}(\rr {2n})$, then the definition of $\op _{1,0,\fy} (a)$ extends
uniquely to a continuous operator from $M^{p'}_{(\omega _1)}(\rr
{n})$ to $M^{p}_{(\omega _2)}(\rr {n})$. Furthermore, for some
constant $C$ it holds
$$
\nm {\op _{1,0,\fy} (a)}{M^{p'}_{(\omega _1)}\to M^{p}_{(\omega _2)}} \le
C{\mathsf d}^{-1}\exp (\nm {\fy ''}{M^{\infty ,1}_{(v)}}) \nmm a \,  \text ;
$$

\vrum

\item if $a\in M^{\infty ,1}_{(\omega )}(\rr {2n})$, then the definition of $\op _{1,0,\fy} (a)$ from $\mathscr S(\rr n)$ to $\mathscr S'(\rr n)$ extends uniquely to a continuous operator from $M^{p}_{(\omega
_1)}$ to $M^p_{(\omega _2)}$;

\vrum

\item if $q\le \min (p,p')$, $a\in M^{p,q}_{(\omega )}(\rr {2n})$, and in addition \eqref{detphicond} holds, then $\op _{1,0,\fy} (a)\in \mathscr I_p(M^2_{(\omega _1)},M^2_{(\omega _2)})$.
\end{enumerate}
\end{thm}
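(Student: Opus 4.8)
The plan is to derive all five assertions from the general results of Section~\ref{sec2} by exploiting that the amplitude $a_1(x,y,\zeta)=a(x,\zeta)$ does not depend on the $y$-variable. First I would fix the splitting $\rr{3n}=V_1\oplus V_2$ with $V_2=\sets{(0,y,0)}{y\in\rr n}$ and $V_1=V_2^{\perp}$, so that $\boldsymbol\varrho=y$ and the non-degeneracy $|\det(\fy''_{\boldsymbol\varrho,\zeta})|\ge\mathsf d$ demanded in Theorems~\ref{thm3.1B} and~\ref{thm3.1C} becomes exactly \eqref{phasecond}. Choosing a product window $\chi=\chi'\otimes\chi_y$ and invoking Proposition~\ref{extops} (equivalently the factorisation \eqref{STFTrel3}), the short-time Fourier transform of the lift splits as $|V_\chi a_1(X,\xi,\eta,z)|=|V_{\chi'}a(x,\zeta,\xi,z)|\,|\widehat{\chi_y}(\eta)|$; since this is constant in $y$ and rapidly decreasing in the dual variable $\eta\in V_2'$, the coorbit norm $\boldsymbol\Theta^{\mathsf p}_{(\omega)}$ with $\mathsf p=(\infty,p,q,1)$ and $\overline V=(V_2,V_1,V_1',V_2')$ collapses to $C\,\nm{a}{M^{p,q}_{(\omega)}(\rr{2n})}$. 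The routine part of this step is to check that the relations \eqref{weightsineq}$'$ imply \eqref{weightsineq} for the lifted weights.

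With this dictionary in hand, parts (1) and (2) are contained in assertion (i) of Subsection~\ref{ssec2.2} (the kernel estimate and the admissibility of $(a,\fy)$), and part (3) is assertion (ii); all three follow from Theorem~\ref{thm3.1B} applied with $q=p$, for which the amplitude norm collapses to $C\,\nm{a}{M^p_{(\omega)}}$ and \eqref{phasecond} supplies the required determinant bound. Part (5) follows in the same way from Theorem~\ref{thm3.1C}(2): the hypotheses $q\le\min(p,p')$, the full condition \eqref{detphicond}, and the standing \eqref{phasecond} (playing the role of $|\det(\fy''_{\boldsymbol\varrho,\zeta})|\ge\mathsf d$) are precisely what that result requires, and the amplitude norm again collapses to $\nm{a}{M^{p,q}_{(\omega)}}$.

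The delicate point is part (4), where only \eqref{phasecond} is available. Because $a_1$ is $y$-independent, its lift in fact lies in $M^{\infty,1}_{(\omega)}(\rr{3n})$ with $\nm{a_1}{M^{\infty,1}(\rr{3n})}=C\,\nm{a}{M^{\infty,1}(\rr{2n})}$, the $L^1$-integrability in $\eta$ coming from the Schwartz factor $\widehat{\chi_y}$; correspondingly the function $\widetilde{\operatorname E}_{a_1,\omega}$ entering \eqref{estfourop1} is concentrated near $\eta=0$. One cannot simply quote Theorem~\ref{boulkemA}, which is proved under the full condition \eqref{detphicond}. Instead I would re-run the estimates \eqref{estfourop1}--\eqref{eq.22}$'$ for this amplitude, using the $\eta$-concentration to replace the two-fold change of variables there by the single substitution $y\mapsto\fy'_\zeta$ (for fixed $x,\zeta$), whose Jacobian is bounded below by $\mathsf d$ through \eqref{phasecond} alone. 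Checking that this $y$-independent reworking of Boulkhemair's argument really does go through with \eqref{phasecond} is, I expect, the main obstacle; the remaining four assertions are direct specialisations of the Section~\ref{sec2} theorems.
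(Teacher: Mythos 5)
Your reduction of parts (1)--(3) and (5) to the Section \ref{sec2} machinery is exactly the paper's argument: the same lift $a_1(x,y,\zeta )=a(x,\zeta )$, the same splitting with $V_2=\sets {(0,y,0)}{y\in \rr n}$ so that $\boldsymbol \varrho =y$ and $|\det (\fy ''_{\boldsymbol \varrho ,\zeta})|\ge \mathsf d$ becomes \eqref{phasecond}, Proposition \ref{extops} with $\mathsf p=(\infty ,p,p,1)$, and the verification (which the paper writes out, using submultiplicativity of $v_1$) that \eqref{weightsineq}$'$ yields \eqref{weightsineq} for $\widetilde \omega (x,y,\zeta ,\xi ,\eta ,z)=\omega (x,\zeta ,\xi ,z)v_1(\eta )$; then Theorems \ref{thm3.1B} and \ref{thm3.1C} apply. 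What you never address, however, is uniqueness, and that is where the paper spends the entire second half of its proof. For $p<\infty$ density of $\mathscr S$ in $M^p_{(\omega )}$ settles it, but for $p=\infty$ it does not: the paper introduces the transposed phase $\widetilde \fy (x,y,\zeta )=-\fy (x,\zeta ,y)$, which again satisfies \eqref{phasecond}, applies the already proved case $p=1$ to get $K_{b,\widetilde \fy}\in M^1_{(1/\omega )}$ for $b\in M^1_{(1/\omega _0)}$, and then defines $K_{a,\fy}$ for $a\in M^{\infty}_{(\omega )}$ through the identity \eqref{Kadjoint}, $(K_{a,\fy},b)=(a,K_{b,\widetilde \fy})$, concluding uniqueness from the weak$^*$ density of $\mathscr S$ in $M^\infty$ (Proposition \ref{p1.4}{\,}(4)). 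Without some such duality argument the word ``uniquely'' in (1)--(3) is unproved at $p=\infty$.

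For part (4), your observation that Theorem \ref{boulkemA} is stated under the full condition \eqref{detphicond} is fair -- the paper itself is silent on this point and simply subsumes (4) under ``the continuity assertions follow'', i.e.{\,}it applies Theorem \ref{boulkemA} to the lift $a_1\in M^{\infty ,1}_{(\widetilde \omega )}(\rr {3n})$ (your factorization $|V_\chi a_1|=|V_{\chi '}a|\, |\widehat {\chi _y}(\eta )|$ does show $a_1$ lies in this space). But your proposed repair does not close. The single substitution $y\mapsto -\fy '_\zeta (X)$ fails already at the $J_2$-estimate: after integrating out $\eta$ against the integrable $\widehat {\chi _y}$-factor, one must bound an integral of $\widehat H(\xi -\fy '_x(X),-\fy '_\zeta (X))\, F_2(x,\xi )^{p'}$ over $(x,y,\zeta ,\xi )$ with $\widehat H\in L^1$; substituting $y\mapsto -\fy '_\zeta (X)$ (Jacobian $\fy ''_{\zeta ,y}$, which \eqref{phasecond} does control) still leaves a free $\zeta$-integration in which the only decay enters through $\xi -\fy '_x(X)$, and $\zeta \mapsto \fy '_x(X)$ has Jacobian $\fy ''_{x,\zeta}$, uncontrolled by \eqref{phasecond}. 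The change of variables that actually works is $(y,\zeta )\mapsto (\fy '_x,\fy '_\zeta )$, whose Jacobian is precisely the block matrix of \eqref{detphicond}; the analogous obstruction appears in $J_1$. So either one proves (4) as the paper does, by citing Theorem \ref{boulkemA} for the lifted amplitude (thereby in effect using \eqref{detphicond}, a hypothesis the statement of (4) arguably omits), or a genuinely new estimate is required; your sketch, which you yourself flag as unchecked, is not it.
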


\par

\begin{proof}
We start to prove the continuity assertions. Let $a_1(x,y,\zeta
)=a(x,\zeta )$, and let
$$
\widetilde \omega (x,y,\zeta ,\xi ,\eta ,z) =\omega (x,\zeta ,\xi
,z)v_1(\eta ).
$$
By Proposition \ref{extops} it follows that
prove that $a_1\in \boldsymbol \Theta _{(\widetilde \omega )}^{\mathsf
p}(\overline V)$ with $\mathsf p=(\infty ,p,p,1)$ and $\overline
V=(V_2,V_1,V_1',V_2')$. Hence Theorem \ref{thm3.1B} shows that it
suffices to prove that \eqref{weightsineq} holds after $\omega$ has
been replaced by $\widetilde \omega$.

\par

By \eqref{weightsineq}$'$ we have
\begin{multline*}
\omega _0(x,y,\xi ,\eta )\le C\omega _0(x,y,\xi ,\fy '_y(X))v_1(\eta
-\fy '_y(X))
\\[1ex]
\le C^2\omega (x,\zeta ,\xi -\fy '(X),-\fy '_\zeta (X))v_1(\eta -\fy
'_y(X))
\\[1ex]
=C^2\widetilde \omega (x,y,\zeta ,\xi -\fy '(X),\eta -\fy
'_y(X),-\fy '_\zeta (X)).
\end{multline*}
This proves that the first two inequalities in \eqref{weightsineq}
hold. Furthermore, since $v_1$ is submultiplicative we have
\begin{multline*}
\widetilde \omega (X,\xi _1+\xi _2,\eta _1+\eta _2,z_1+z_2) = \omega
(x,\zeta ,\xi _1+\xi _2,z_1+z_2)v_1(\eta _1+\eta _2)
\\[1ex]
\le C\omega (x,\zeta ,\xi _1,z_1)v_2(\xi _2,z _2)v_1(\eta _1)v(\eta
_2) = C\widetilde \omega (X,\xi _1,\eta _1,z_1)v(\xi _2,\eta _2,z_2),
\end{multline*}
for some constant $C$. This proves the last inequality in
\eqref{weightsineq}, and the continuity assertions follow.

\par

It remains to prove the uniqueness. If $p<\infty$, then the uniqueness
follows from the fact that $\mathscr S$ is dense in
$M^p_{(\omega)}$.

\par

Next we consider the case $p=\infty$. Assume that $a\in M^1_{(\omega
)}(\rr {2n})$ and $b\in M^1_{(1/\omega _0)}(\rr
{2n})$, and let $\widetilde \fy (x,y,\xi )=-\fy (x,\xi ,y)$. Since
\eqref{phasecond} also holds when $\fy$ is replaced by $\widetilde
\fy$, the first part of the proof shows that $K_{b,\widetilde
\fy}\in M^{1}_{(1/\omega )}$. Furthermore, by straight-forward
computations we have
\begin{equation}\label{Kadjoint}
(K_{a,\fy},b)=(a,K_{b,\widetilde \fy}).
\end{equation}
In view of Proposition \ref{p1.4} (3), it follows that the right-hand
side in \eqref{Kadjoint} makes sense if, more generally, $a$ is an
arbitrary element in $M^{\infty}_{(\omega )}(\rr {2n})$, and then
$$
|(a,K_{b,\widetilde \fy})|\le C\mathsf d^{-1}\nm a{M^\infty _{(\omega
)}}\nm b{M^1_{(1/\omega _0)}}\exp (C\nm {\fy ''}{M^{\infty,1}_{(v)}}),
$$
for some constant $C$ which is independent of $\mathsf d$, $a\in
M^\infty _{(\omega )}$ and $b\in M^1_{(1/\omega _0)}$.

\par

Hence, by letting $K_{a,\fy}$ be defined as
\eqref{Kadjoint} when $a\in M^\infty$, it follows that $a\mapsto
K_{a,\fy}$ on $M^1$ extends to a continuous map on
$M^\infty$. Furthermore, since $\mathscr S$ is dense in $M^\infty$
with respect to the weak$^*$ topology, it follows that this extension
is unique. We have therefore proved the theorem for $p\in \{ 1,\infty
\}$.
\end{proof}

\par

Finally we remark that the results in Section \ref{sec2} also give Theorem \ref{fourop3}$'$, which concerns Fourier integral operators of the form
$$
\op _{t_1,t_2,\fy}(a)f(x)\equiv \iint a(t_1x+t_2y,\xi
)f(y)e^{i\varphi (t_1x+t_2y,-t_2x+t_1y,\xi )}\, dyd\xi .
$$
It is then natural to assume that the conditions
\eqref{phasecond}$'$ is replaced by
\begin{equation}\tag*{(\ref{phasecond})$'$}
 t_1^2+t_2^2=1,\qquad |\det(\fy ''_{y,\xi }(X))|\ge \mathsf d,
\end{equation}
and
\begin{equation}\tag*{(\ref{weightsineq})$''$}
\begin{aligned}
&\omega _0(t_1x+t_2y,-t_2x+t_1y,t_1\xi+t_2\varphi '_y(X),-t_2\xi+t_1\varphi '_y(X))
\\[1ex]
&\le C \omega (x,\zeta ,\xi -\varphi '_x(X),-\varphi '_\zeta (X))
\\[1ex]
\frac {\omega _2(x,\xi )}{\omega _1(y,-\eta )} &\le C\omega _0(x,y,\xi
, \eta),
\\[1ex]
\omega _0(x,y,\xi+t_2\eta _2, \eta _1+t_1\eta _2) &\le  \omega _0(x,y,\xi
,\eta _1)v_1(\eta _2) 
\\[1ex]
\omega (x,\zeta ,\xi _1+\xi _2,z_1+z_2) &\le \omega (x,\zeta ,\xi
_1,z_1)v_2(\xi _2,z_2),
\\[1ex]
v(X,\xi ,\eta ,z) &= v_1(\eta )v_2(\xi ,z),\qquad x,y,z,
z_j,\xi ,\xi _j ,\eta ,\zeta \in \rr n.
\end{aligned}
\end{equation}

\par

\renewcommand{\rubrik}{Theorem \ref{fourop3}$'$}

\par

\begin{tom}
Assume that $p\in [1,\infty ]$, $\mathsf d >0$, $v\in \mathscr P(\rr
{6n})$ is submultiplicative, $\omega _0,\omega \in \mathscr P(\rr
{4n})$ and that $\varphi \in C(\mathbf {R}^{3n})$ are such that $\fy$ is
real-valued, $\varphi ^{(\alpha )}\in M ^{\infty,1}_{(v)}$ for all
multi-indices $\alpha$ such that $|\alpha | =2$, and \eqref{phasecond}$'$
and \eqref{weightsineq}$''$ are fulfilled for some constants $t_1$, $t_2$ and $C$. Then the following is true:
\begin{enumerate}
\item the map
$$
a\mapsto K_{a,\fy}(x,y)\equiv \int a(t_1x+t_2y,\zeta
)e^{i\varphi (t_1x+t_2y,-t_2x+t_1y,\zeta )}\, d\zeta ,
$$
from $\mathscr S(\rr {2n})$ to $\mathscr S'(\rr {2n})$ extends
uniquely to a continuous map from $M^p_{(\omega )}(\rr {2n})$ to
$M^p_{(\omega _0)}(\rr {2n})$;

\vrum

\item the map $a\mapsto \op _{t_1,t_2,\fy}(a)$ from $\mathscr S(\rr {2n})$ to $\mathscr L(\mathscr S(\rr n),\mathscr S'(\rr n))$ extends uniquely to a continuous map from $M^p_{(\omega )}(\rr {2n})$ to $\mathscr L(\mathscr S(\rr n),\mathscr S'(\rr n))$;

\vrum

\item if $a\in M^p_{(\omega )}(\rr {2n})$, then the definition of $\op _{t_1,t_2,\fy} (a)$ extends
uniquely to a continuous operator from $M^{p'}_{(\omega _1)}(\rr
{n})$ to $M^{p}_{(\omega _2)}(\rr {n})$. Furthermore, for some
constant $C$ it holds
$$
\nm {\op _{t_1,t_2,\fy} (a)}{M^{p'}_{(\omega _1)}\to M^{p}_{(\omega _2)}} \le
C{\mathsf d}^{-1}\exp (\nm {\fy ''}{M^{\infty ,1}_{(v)}}) \nmm a \,  \text ;
$$

\vrum

\item if $a\in M^{\infty ,1}_{(\omega )}(\rr {2n})$, then the definition of $\op _{t_1,t_2,\fy} (a)$ from $\mathscr S(\rr n)$ to $\mathscr S'(\rr n)$ extends uniquely to a continuous operator from $M^{p}_{(\omega _1)}$ to $M^p_{(\omega _2)}$;

\vrum

\item if $q\le \min (p,p')$, $a\in M^{p,q}_{(\omega )}(\rr {2n})$, and in addition \eqref{detphicond} holds, then $\op _{t_1,t_2,\fy} (a)\in \mathscr I_p(M^2_{(\omega _1)},M^2_{(\omega _2)})$.
\end{enumerate}
\end{tom}

\par

\begin{proof}
By letting
$$
x_1 =t_1x+t_2y,\quad y_1= -t_2x+t_1y
$$
as new coordinates, it follows that we may assume that $t_1=1$ and
$t_2=0$, and then the result agrees with Theorem
\ref{fourop3}. The proof is complete.
\end{proof}

\par

%%%%%
\subsection{Fourier integral operators with smooth amplitudes}\label{ssec3.2}
%%%%%
Next we apply Theorem \ref{thm3.1C} to Fourier integral operators with
smooth amplitudes. We recall that the condition on $a$ in Theorem
\ref{thm3.1C} means exactly that  $a\in \boldsymbol \Theta _{(\omega
)}^{\mathsf p}(\overline V)$ with $\mathsf p=(\infty ,p,q,1)$ and
$\overline V=(V_2,V_1,V_1',V_2')$. In what follows we consider the
case when $n_1=n_2=m=n$ and
\begin{equation}\label{vectspcond}
V_1=V_1'= \sets {(x,0,\zeta )\in \rr {3n}}{x,\zeta \in \rr n},\quad
\text{and}\quad V_2 = V_2' =  \sets {(0,y,0 )\in \rr {3n}}{y \in \rr
n}.
\end{equation}
However, the analysis presented here also holds without these
restrictions. The details are left for the reader. We are especially
concerned with spaces of amplitudes of the form
$$
C_{(\omega )}^{N,p}(\rr {3n})=\sets {a\in C^N(\rr {3n})}{\nm
a{C_{(\omega )}^{N,p}}<\infty},
$$
where $N\ge$ is an integer, $\omega \in \mathscr P(\rr {3n})$ and
$$
\nm a{C_{(\omega )}^{N,p}} \equiv \sum _{|\alpha |\le N}\Big ( \iint
_{\rr {2n}} \, \nm {a(x,\cdo ,\zeta )\omega (x,\cdo ,\zeta
)}{L^\infty}^p\, dxd\zeta \Big )^{1/p}.
$$
We also set
$$
C_{(\omega )}^{\infty ,p}(\rr {3n}) =\cap _{N\ge 0}C_{(\omega
)}^{N,p}(\rr {3n})
$$

\par

The following proposition links $C_{(\omega )}^{N,p}$ with
$\boldsymbol \Theta _{(\omega )}^{\mathsf p}(\overline V)$:

\par

\begin{prop}\label{identities2}
Assume that \eqref{vectspcond} is fulfilled, $N\ge 0$ is an integer,
$\overline V=(V_2,V_1,V_1',V_2)$, $\mathsf p=(\infty ,p,q,1)$,
$\mathsf p_1=(\infty ,p,1,1)$ and that $\mathsf p_2=(\infty ,p,\infty
,\infty )$. Also assume that $\omega \in \mathscr P(\rr {3n})$, and
let
$$
\omega _s(X,\xi ,\eta ,z)=\omega (X)\eabs {\xi ,\eta ,z}^s,\quad s\in
\mathbf R .
$$
If $s_1<-2n/q'$ when $q>1$ and $s_1\le 0$ when $q=1$, and
$s_2>n(q+2)/q$, then the following embedding holds:
\begin{align}
\boldsymbol \Theta _{(\omega _N)}^{\mathsf p_1} &\hookrightarrow
\boldsymbol \Theta _{(\omega _N)}^{\mathsf p} \hookrightarrow
\boldsymbol \Theta _{(\omega _N)}^{\mathsf p_2}\label{embcoorb1}
\\[1ex]
\boldsymbol \Theta _{(\omega _{N+s_2})}^{\mathsf p_2} &\hookrightarrow
\boldsymbol \Theta _{(\omega _N)}^{\mathsf p} \hookrightarrow
\boldsymbol \Theta _{(\omega _{N+s_1})}^{\mathsf p_1}\label{embcoorb2}
\intertext{and}
C_{(\omega )}^{N+3n+1,p}  &\hookrightarrow \boldsymbol \Theta _{(\omega
_N)}^{\mathsf p_1}\hookrightarrow  C_{(\omega
)}^{N,p}.\label{embcoorb3}
\end{align}
\end{prop}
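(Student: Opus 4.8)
The plan is to prove the three chains \eqref{embcoorb1}, \eqref{embcoorb2} and \eqref{embcoorb3} separately. Under \eqref{vectspcond} we identify $\boldsymbol t=(x,\zeta)\in V_1$, $\boldsymbol\varrho=y\in V_2$, $\boldsymbol\tau=(\xi,z)\in V_1'$ and $\boldsymbol u=\eta\in V_2'$, so that every coorbit norm occurring is a mixed Lebesgue norm in $(\boldsymbol\tau,\boldsymbol u)$ of
$$
\Psi(\boldsymbol\tau,\boldsymbol u)=\Big(\int_{V_1}\big(\sup_{\boldsymbol\varrho\in V_2}|V_\chi a(X,\xi,\eta,z)\,\omega(X)|\big)^p\,d\boldsymbol t\Big)^{1/p},
$$
the polynomial factor $\eabs{\xi,\eta,z}^s$ of $\omega_s$ being constant in both $\boldsymbol t$ and $\boldsymbol\varrho$ and hence pulling out as a weight $\eabs{\xi,\eta,z}^s$ on $\Psi$. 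With this reduction \eqref{embcoorb1} is immediate from Proposition \ref{p1.4AA}{\,}(2): one has $\mathsf p_1\le\mathsf p\le\mathsf p_2$ (only the third and fourth entries change, with $1\le q\le\infty$) and the weight $\omega_N$ is common to all three spaces.

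For \eqref{embcoorb2} the two embeddings follow from weighted H\"older estimates in $(\boldsymbol\tau,\boldsymbol u)=(\xi,z,\eta)$. For the first, since $\eabs{\xi,\eta,z}^{N}\Psi\le\eabs{\xi,\eta,z}^{-s_2}\,\sup_{\boldsymbol\tau,\boldsymbol u}\big(\eabs{\xi,\eta,z}^{N+s_2}\Psi\big)$, the $\mathsf p$-norm is dominated by $\nm a{\boldsymbol\Theta^{\mathsf p_2}_{(\omega_{N+s_2})}}$ times $\int_{\rr n}\big(\int_{\rr{2n}}\eabs{\xi,\eta,z}^{-s_2q}\,d\xi dz\big)^{1/q}d\eta$; writing $\eabs{\xi,\eta,z}^2=\eabs{\xi,z}^2+|\eta|^2$ and rescaling gives the inner integral $=C\eabs{\eta}^{2n-s_2q}$, whence the $1/q$-power equals $C\eabs{\eta}^{2n/q-s_2}$ and the $\eta$-integral converges exactly when $s_2>n+2n/q=n(q+2)/q$. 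For the second, H\"older in $\boldsymbol\tau$ with exponents $q,q'$ gives, for each fixed $\eta$,
$$
\int_{V_1'}\eabs{\xi,\eta,z}^{N+s_1}\Psi\,d\boldsymbol\tau\le\Big(\int_{V_1'}\eabs{\xi,\eta,z}^{Nq}\Psi^q\,d\boldsymbol\tau\Big)^{1/q}\Big(\int_{\rr{2n}}\eabs{\xi,\eta,z}^{s_1q'}\,d\xi dz\Big)^{1/q'},
$$
and the last factor equals $C\eabs{\eta}^{2n/q'+s_1}$, which is bounded uniformly in $\eta$ precisely when $s_1<-2n/q'$ (the case $q=1$ being the trivial supremum bound, valid for $s_1\le0$). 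Integrating the remaining $L^q$-factor over $\eta$ yields $\nm a{\boldsymbol\Theta^{\mathsf p_1}_{(\omega_{N+s_1})}}\le C\nm a{\boldsymbol\Theta^{\mathsf p}_{(\omega_N)}}$.

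The chain \eqref{embcoorb3} is proved by two one-sided estimates on $V_\chi a$. For $\boldsymbol\Theta^{\mathsf p_1}_{(\omega_N)}\hookrightarrow C^{N,p}_{(\omega)}$ I invert the short-time Fourier transform, writing for $|\alpha|\le N$ the function $a^{(\alpha)}(X)$ as $c\iint V_\chi a(Y,\Xi)\,\partial_X^\alpha(e^{i\scal X\Xi}\chi(X-Y))\,dYd\Xi$; Leibniz' rule bounds the modulation derivative by $C\eabs{\Xi}^{|\alpha|}\le C\eabs{\Xi}^{N}$ times a sum of translated Schwartz windows. Transferring $\omega(X)$ onto $\omega(Y)$ by moderateness (the factor $v_0(X-Y)$, with $\omega$ being $v_0$-moderate, being absorbed into the Schwartz decay), then taking the supremum in $y$ and the $L^p$-norm in $(x,\zeta)$ and invoking Minkowski's and Young's inequalities in the $(x,\zeta)$-block, reproduces exactly the $L^1$-in-$\Xi$ integration against $\eabs{\Xi}^{N}$ that defines $\nm a{\boldsymbol\Theta^{\mathsf p_1}_{(\omega_N)}}$. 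For the reverse embedding $C^{N+3n+1,p}_{(\omega)}\hookrightarrow\boldsymbol\Theta^{\mathsf p_1}_{(\omega_N)}$ I integrate by parts $N+3n+1$ times in the defining integral of $V_\chi a$, producing $|V_\chi a(X,\Xi)|\le C\eabs{\Xi}^{-(N+3n+1)}\sum_{|\beta|\le N+3n+1}\int|a^{(\beta)}(T)|\,w_\beta(T-X)\,dT$ with $w_\beta$ Schwartz; the surplus exponent makes $\eabs{\Xi}^{N}\eabs{\Xi}^{-(N+3n+1)}=\eabs{\Xi}^{-(3n+1)}$ integrable over the $3n$-dimensional frequency block $(\xi,z,\eta)$ carrying the two $L^1$-factors of $\mathsf p_1$, while the $y$-supremum and the $(x,\zeta)$-$L^p$-norm are controlled by $\nm a{C^{N+3n+1,p}_{(\omega)}}$ after the same convolution estimate.

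The main obstacle is the bookkeeping in \eqref{embcoorb3}: one must transfer the weight $\omega(X)$ --- which depends on the center variable $y$ that is simultaneously being supremized --- between the center $X$ and the evaluation point $T$ (or $Y$) using moderateness, and then interchange the $y$-supremum with the $Y$- and $\Xi$-integrations. This is handled by integrating the Schwartz kernel over the $y$-variable first, which produces a fixed $L^1$ convolution kernel $W$ in the $(x,\zeta)$-variables and leaves a clean $\sup_y$ inside; Young's inequality in $(x,\zeta)$ and Minkowski's inequality (to pull the $L^p$-norm inside the $\Xi$-integral) then close the estimate. By contrast the threshold computations in \eqref{embcoorb2} are routine once the splitting $\eabs{\xi,\eta,z}^2=\eabs{\xi,z}^2+|\eta|^2$ is used, and \eqref{embcoorb1} is immediate.
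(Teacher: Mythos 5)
Your proposal is correct. For \eqref{embcoorb1} and \eqref{embcoorb2} it essentially coincides with the paper's argument: \eqref{embcoorb1} is the monotonicity in Proposition \ref{p1.4AA}{\,}(2), and \eqref{embcoorb2} is a weighted H{\"o}lder estimate in the frequency block; your exact scaling identity $\int_{\rr {2n}}\eabs {\xi ,\eta ,z}^{-s_2q}\, d\xi dz=C\eabs \eta ^{2n-s_2q}$ replaces the paper's splitting $\eabs {\xi ,\eta ,z}^{-s_2q}\le \eabs {\xi ,z}^{-(2n+\ep )}\eabs \eta ^{-(n+\ep )q}$ with an auxiliary $\ep$, and you also write out the second inclusion of \eqref{embcoorb2}, whose details the paper omits; both computations reproduce the stated thresholds $s_2>n(q+2)/q$ and $s_1<-2n/q'$ exactly. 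For \eqref{embcoorb3}, however, your route is genuinely different. The paper first reduces to $N=0$ and $\omega =1$ by Lemma \ref{lemma1identities2} --- the multiplication and Fourier-multiplier homeomorphisms together with the derivative characterization \eqref{dercoorb}, whose proof rests on an $S^0_0$ multiplier decomposition --- and then needs only two short estimates: the diagonal inversion $a(X)=\iiint V_\chi a(X,\xi ,\eta ,z)e^{i(\scal x\xi +\scal y\eta +\scal \zeta z)}\, d\xi d\eta dz$ plus Minkowski's inequality for the right embedding, and $|V_\chi a|\le C|a|\ast |\check \chi |$ plus Young's inequality for the left, finally upgrading by $\omega ^{-1}(D)$ with the weight $\eabs {\xi ,\eta ,z}^{-3n-1}$. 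You instead prove both embeddings directly, carrying the weight $\omega$ and the order $N$ along: the full two-sided reconstruction formula with Leibniz' rule for the right embedding (note that the paper's diagonal inversion cannot be differentiated in $X$, since $X$ sits inside $V_\chi a$ as well, which is precisely why the paper reduces to $N=0$ first), and $N+3n+1$ integrations by parts for the left, the surplus exponent yielding the integrable factor $\eabs {\xi ,\eta ,z}^{-(3n+1)}$ over the $3n$-dimensional frequency block. Your handling of the $\sup _y$ versus the $Y$-integration --- integrating the Schwartz kernel over the $y$-variable first to produce a $y$-independent $L^1$ kernel in $(x,\zeta )$, then Young and Minkowski --- is sound and is the crucial bookkeeping step. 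What each approach buys: yours is self-contained and avoids Lemma \ref{lemma1identities2}, whose multiplier proof is the most technical ingredient of the paper's route; the paper's reduction is shorter given the lemma, and the lemma carries independent weight since it is reused elsewhere (Remark \ref{remidentities2}, Lemma \ref{IJspacelemma}).
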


\par

For the proof it is convenient to let $\mathscr P_0(\rr n)$ be the set
of all $\omega \in \mathscr P(\rr n)\cap C^{\infty}(\rr n)$ such that
$\omega ^{(\alpha )}/\omega$ is bounded for all multi-indices
$\alpha$.

\par

\begin{lemma}\label{lemma1identities2}
Assume that $\mathsf p = (p,q,r,s)\in [1,\infty ]^4$, and that $N\ge
0$ is an integer. Then the following is true:
\begin{enumerate}
\item if $\omega \in \mathscr P(\rr n)$, then it exists an element
$\omega _0\in \mathscr P_0(\rr n)$ such that
\begin{equation}\label{weightequiv}
C^{-1}\omega _0 \le \omega \le C\omega _0,
\end{equation}
for some constant $C$;

\vrum

\item if $\omega \in \mathscr P(\rr {2n})$, $\widetilde \omega _j\in
\mathscr P_0(\rr {2n})$ for $j=1,2$ are such that $\widetilde \omega
_1(x,\xi )=\widetilde \omega _1(x)$ and $\widetilde \omega 2(x,\xi
)=\widetilde \omega _2(\xi )$, then the mappings
\begin{align*}
f &\mapsto \widetilde \omega _1\cdot f
\intertext{and}
f &\mapsto \widetilde \omega _2(D) f
\end{align*}
are homeomorphisms from $\boldsymbol \Theta _{(\widetilde \omega _1\,
\omega )}^{\mathsf p}(\overline V)$ and from $\boldsymbol \Theta
_{(\widetilde \omega _2\, \omega )}^{\mathsf p}(\overline V)$
respectively to $\boldsymbol \Theta _{(\omega )}^{\mathsf p}(\overline
V)$. Furthermore, if $\omega _{N_1,N_2}(x,\xi )=\omega (x,\xi )\eabs
x^{N_2}\eabs \xi ^{N_1}$, then
\begin{equation}\label{dercoorb}
\begin{aligned}
&\boldsymbol \Theta _{(\omega _{N_1,N_2})}^{\mathsf p}(\overline V) =
\sets {f\in \mathscr S'(\rr n)}{x^\alpha \partial ^{\beta }f\in
\boldsymbol \Theta _{(\omega )}^{\mathsf p}(\overline V),\ |\alpha
|\le N_2,\ |\beta|\le N_1}
\\[1ex]
&= \sets {f\in \mathscr S'(\rr n)}{f,\, x_j^{N_2}f,\, D_k^{N_1}f,\,
x_j^{N_2}D_k^{N_1}f\in \boldsymbol \Theta _{(\omega )}^{\mathsf
p}(\overline V),\ 1\le j,k\le n}\text ;
\end{aligned}
\end{equation}

\vrum

\item if $\omega \in \mathscr P(\rr {6n})$ and $\omega _0\in \mathscr
P_0(\rr {6n})$ are such that $\omega (X,\xi ,\eta ,z)=\omega (X)$ and
$\omega _0(X,\xi ,\eta ,z)=\omega _0(X)$, then the map $a\mapsto
\omega _0\cdot a$ is a bijection from $C _{(\omega _0\omega )}^{N,
p}(\rr {3n})$ to $C _{(\omega )}^{N, p}(\rr {3n})$.
\end{enumerate}
\end{lemma}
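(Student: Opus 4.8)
The plan is to treat the three assertions separately: (1) is a routine mollification, (3) is a direct Leibniz estimate, and (2)—the action of a smooth weight on a coorbit space—carries the real content. For (1) I would fix $\phi\in C_0^\infty(\rr n)$ with $\phi\ge 0$ and $\int\phi=1$ and set $\omega_0=\omega*\phi$. Since $\omega\in\mathscr P(\rr n)$ is $v$-moderate for some polynomial $v$, \eqref{moderate} gives $C^{-1}\omega\le\omega_0\le C\omega$ at once, which is \eqref{weightequiv}; smoothness is automatic, and differentiation under the integral sign yields $\omega_0^{(\alpha)}=\omega*\phi^{(\alpha)}$, which a second use of moderation bounds by $C_\alpha\omega_0$, so $\omega_0^{(\alpha)}/\omega_0$ is bounded and $\omega_0\in\mathscr P_0(\rr n)$.

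For (2) I would first note that $\mathscr P_0$ is stable under reciprocals: if $\widetilde\omega_1\in\mathscr P_0$, then $(1/\widetilde\omega_1)^{(\alpha)}/(1/\widetilde\omega_1)$ is a polynomial in the bounded quantities $\widetilde\omega_1^{(\beta)}/\widetilde\omega_1$, hence bounded. Thus multiplication by $1/\widetilde\omega_1$ inverts multiplication by $\widetilde\omega_1$, and it suffices to prove one-sided boundedness. The key step is the pointwise identity $V_\chi(\widetilde\omega_1 f)(x,\xi)=\widetilde\omega_1(x)\,V_{\Phi_x}f(x,\xi)$, where $\Phi_x(z)=(\widetilde\omega_1(x+z)/\widetilde\omega_1(x))\,\chi(z)$; because $\widetilde\omega_1\in\mathscr P_0$ and $\chi$ has compact support, the windows $\Phi_x$ lie in a bounded subset of $\mathscr S(\rr n)$, uniformly in $x$. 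A change-of-window estimate then dominates $|V_{\Phi_x}f(x,\xi)|$ by $(|V_\chi f|*\vartheta)(x,\xi)$ for a fixed $\vartheta\in L^1_{(v)}$, so that applying the $L^{\mathsf p}_{(\omega)}(\overline V)$ norm and Young's inequality gives $\nm{\widetilde\omega_1 f}{\boldsymbol\Theta_{(\omega)}^{\mathsf p}}\le C\nm f{\boldsymbol\Theta_{(\widetilde\omega_1\omega)}^{\mathsf p}}$; the factor $\widetilde\omega_1(x)$ is precisely what converts the weight $\omega$ into $\widetilde\omega_1\omega$. The multiplier statement for $\widetilde\omega_2(D)$ follows by the same argument run on the Fourier side, using the covariance $V_\chi f(x,\xi)=e^{-i\scal x\xi}V_{\widehat\chi}\widehat f(\xi,-x)$ to push the weight $\widetilde\omega_2(\xi)$ through a family of windows that is again bounded in $\mathscr S$. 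Finally, \eqref{dercoorb} follows by composing these two homeomorphisms with $\widetilde\omega_1=\eabs x^{N_2}$ and $\widetilde\omega_2=\eabs\xi^{N_1}$, so that $\boldsymbol\Theta_{(\omega_{N_1,N_2})}^{\mathsf p}$ is identified with $\eabs x^{-N_2}\eabs D^{-N_1}\boldsymbol\Theta_{(\omega)}^{\mathsf p}$; the reduction from all $x^\alpha\partial^\beta f$ with $|\alpha|\le N_2,\ |\beta|\le N_1$ to the single pure powers is the elementary equivalence $\eabs x^{N_2}\asymp 1+\sum_j|x_j|^{N_2}$ together with its Fourier counterpart and interpolation of intermediate derivatives.

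For (3), since $\omega_0\in\mathscr P_0(\rr{6n})$ depends only on $X\in\rr{3n}$, Leibniz's rule gives $\partial^\alpha(\omega_0 a)=\sum_{\beta\le\alpha}\binom{\alpha}{\beta}(\partial^\beta\omega_0)(\partial^{\alpha-\beta}a)$ with $|\partial^\beta\omega_0|\le C_\beta\omega_0$, whence $\nm{\omega_0 a}{C^{N,p}_{(\omega)}}\le C\nm a{C^{N,p}_{(\omega_0\omega)}}$; using $1/\omega_0\in\mathscr P_0$ for the reverse inequality shows $a\mapsto\omega_0 a$ is a bijection with the claimed norm equivalence. I expect the main obstacle to be the multiplication step in (2): one must verify that the $x$-dependent windows $\Phi_x$ are controlled uniformly and that the change-of-window and convolution estimate remains valid in the mixed-norm, weighted coorbit setting rather than in a single modulation space. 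This is exactly where the retract description of $\boldsymbol\Theta_{(\omega)}^{\mathsf p}(\overline V)$ over $L^{\mathsf p}_{(\omega)}(\overline V)$ (Corollary 4.6 in \cite{FG1}) is needed, to transfer the scalar convolution bound to the vector-valued mixed norm.
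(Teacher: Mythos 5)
Your handling of (1) and (3) is correct: the mollification $\omega_0=\omega *\phi$ is essentially the standard argument (the paper simply cites Lemma 1.2 in \cite{To9} for this), and the Leibniz estimate together with the stability of $\mathscr P_0$ under reciprocals settles (3), which the paper dismisses as a direct consequence of the definitions. For the homeomorphism part of (2), your argument via the $x$-dependent windows $\Phi _x(z)=(\widetilde \omega _1(x+z)/\widetilde \omega _1(x))\chi (z)$, the uniform change-of-window bound $|V_{\Phi _x}f|\le (|V_\chi f|*\vartheta )$ and weighted Young's inequality in the mixed norms is sound, and in fact more self-contained than the paper, which cites an external theorem for the modulation-space case and asserts that the general coorbit case follows similarly. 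Two small remarks: compact support of $\chi$ is not needed, since $\widetilde \omega _1\in \mathscr P_0$ keeps $\Phi _x$ in a bounded subset of $\mathscr S(\rr n)$ for any Schwartz window (this is also what rescues the Fourier-side argument, where $\widehat \chi$ is not compactly supported); and Young's inequality in $L^{\mathsf p}_{(\omega )}(\overline V)$ follows from Minkowski's inequality applied in each factor, so the retract description from \cite{FG1} is dispensable at this point.

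The genuine gap is in your derivation of \eqref{dercoorb}. The hard inclusion is that $f,\, x_j^{N_2}f,\, D_k^{N_1}f,\, x_j^{N_2}D_k^{N_1}f\in \boldsymbol \Theta _{(\omega )}^{\mathsf p}$ forces $f\in \boldsymbol \Theta _{(\omega _{N_1,N_2})}^{\mathsf p}$, and here the pointwise equivalence $\eabs \xi ^{N}\asymp 1+\sum _j|\xi _j|^{N}$ does not suffice: coorbit norms are not monotone under pointwise domination of multiplier symbols, i.e. $|m_1|\le |m_2|$ gives no bound of $\nm {m_1(D)f}{\boldsymbol \Theta ^{\mathsf p}_{(\omega )}}$ by $\nm {m_2(D)f}{\boldsymbol \Theta ^{\mathsf p}_{(\omega )}}$ unless $m_1/m_2$ is itself a bounded multiplier, and for odd $N$ the function $|\xi _j|^{N}$ is not even smooth across the hyperplane $\xi _j=0$, so the naive quotients fail to be admissible symbols there. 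What is needed, and what the paper's proof supplies, is a smooth conic partition of unity: cover $\rr n$ by $\Omega _0=\sets {\xi}{|\xi |<2}$ and the cones $\Omega _j=\sets {\xi }{1<|\xi |<n|\xi _j|}$, take nonnegative $\fy _j\in S^0_0$ subordinate to this cover with $\sum \fy _j=1$, and observe that $\psi _j(\xi )=\xi _j^{-N}\eabs \xi ^{N}\fy _j(\xi )\in S^0_0$ because $|\xi |\asymp |\xi _j|$ on $\supp \fy _j$; then $\eabs D^{N}\fy _j(D)f$ equals $\psi _j(D)\partial _j^{N}f$ up to a constant, and the multiplier boundedness from the first part of (2) yields \eqref{uppskattning1}, with the symmetric construction in the $x$-variables giving \eqref{uppskattning2}, after which the two estimates are combined. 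Note also that your identification of $\boldsymbol \Theta _{(\omega _{N_1,N_2})}^{\mathsf p}$ with $\eabs x^{-N_2}\eabs D^{-N_1}\boldsymbol \Theta _{(\omega )}^{\mathsf p}$ fixes an order of composition, whereas the generators in \eqref{dercoorb} have $x_j^{N_2}$ to the left of $D_k^{N_1}$; the commutators to be absorbed are lower-order mixed terms that are precisely not among the hypotheses of $\widetilde M_0$, which is another reason the reduction cannot be carried out by pointwise weight comparison alone. Finally, your appeal to ``interpolation of intermediate derivatives'' is both unavailable off the shelf in this weighted coorbit setting and unnecessary: the easy inclusion $\boldsymbol \Theta _{(\omega _{N_1,N_2})}^{\mathsf p}\subseteq M_0$, which controls all the mixed terms $x^\alpha \partial ^\beta f$, already follows from the homeomorphism part, so no intermediate-derivative estimates are required--but the conic multiplier decomposition cannot be skipped.
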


\par

\begin{proof}
The assertion (1) follows from Lemma 1.2 in \cite{To9} The first part
of assertion (2) is a consequence of Theorem 3.2 when $\boldsymbol
\Theta _{(\omega )}^{\mathsf p}(\overline V)$ is a modulation
space. The general case follows by similar arguments as in the proof
of that theorem. We omit the details. The assertion (3) is a
straight-forward consequence of the definitions.

\par

It remains to prove \eqref{dercoorb}. It is convenient to set
$$
\sigma _{N_1,N_2}(x,\xi )=\eabs x^{N_2}\eabs \xi ^{N_1}.
$$
Furthermore, let $M_0$  be the set of all $f\in {\boldsymbol \Theta}
_{(\omega )}^{\mathsf p}$ such that $x^{\beta}\partial ^\alpha f\in
{\boldsymbol \Theta} _{(\omega )}^{\mathsf p}$ when $|\alpha |\le N_1$
and $|\beta |\le N_2$, and let $\widetilde M_0$ be the set of all
$f\in {\boldsymbol \Theta} _{(\omega )}^{\mathsf p}$ such that
$x_j^{N_2}\partial _k^{N_1} f\in {\boldsymbol \Theta} _{(\omega
)}^{\mathsf p}$ for $j,k=1,\dots , N$. We shall prove that
$M_0=\widetilde M_0={\boldsymbol \Theta} ^{\mathsf p}_{(\sigma
_{N_1,N_2}\omega )}$. Obviously, $M_0\subseteq \widetilde M_0$.  By
the first part of (2) it follows that ${\boldsymbol \Theta} ^{\mathsf
p}_{(\sigma _{N_1,N_2}\omega )}\subseteq M_0$. The result therefore
follows if it is proved that $\widetilde M_0\subseteq {\boldsymbol
\Theta} ^{\mathsf p}_{(\sigma _{N_1,N_2}\omega )}$.

\par

In order to prove this, assume first that $N_1=N$, $N_2=0$, 
$f\in \widetilde M_0$, and choose open sets
$$
\Omega _0=\sets{\xi \in \rr m}{|\xi |<2},\quad \text{and}\quad \Omega
_j=\sets {\xi \in \rr m}{1<|\xi |<n|\xi _j|}.
$$
Then $\cup _{j=0}^n\Omega _j=\rr n$, and there are non-negative
functions $\fy _0,\dots ,\fy _n$ in $S^0_0$ such that $\supp \fy _j
\subseteq \Omega _j$ and $\sum _{j=0}^n\fy _j=1$. In particular,
$f=\sum _{j=0}^nf_j$ when  $f_j=\fy _j(D)f$. The result follows if we
prove that $f_j\in {\boldsymbol \Theta} ^{\mathsf p}_{(\sigma
_{N,0}\omega )}$ for every $j$.

\par

Now set $\psi _0(\xi )=\sigma _N(\xi )\fy_0(\xi )$ and $\psi _j(\xi
)=\xi _j^{-N}\sigma _N(\xi )\fy _j(\xi )$ when $j=1,\dots ,n$. Then
$\psi _j\in S^0_0$ for every $j$. Hence the first part of (2) shows
that gives
\begin{multline*}
\nm {f_j}{{\boldsymbol \Theta} ^{\mathsf p}_{(\sigma
_{N,0}\omega)}}\le C_1\nm {\sigma _N(D)f_j}{{\boldsymbol \Theta}
_{(\omega )}^{\mathsf p}}
\\[1ex]
=C_1\nm {\psi _j(D)\partial _j^Nf}{{\boldsymbol \Theta} _{(\omega
)}^{\mathsf p}}\le C_2\nm {\partial _j^Nf}{{\boldsymbol \Theta}
_{(\omega )}^{\mathsf p}}<\infty
\end{multline*}
and
$$
\nm {f_0}{{\boldsymbol \Theta} ^{\mathsf p}_{(\sigma
_{N,0}\omega)}}\le C_1\nm {\sigma _N(D)f_0}{{\boldsymbol \Theta}
_{(\omega )}^{\mathsf p}} =C_1\nm {\psi _0(D)f}{{\boldsymbol \Theta}
^{\mathsf p}_{(\omega )}}\le C_2\nm {f}{{\boldsymbol \Theta} _{(\omega
)}^{\mathsf p}}<\infty
$$
for some constants $C_1$ and $C_2$. This proves that
\begin{equation}\label{uppskattning1}
\nm f{{\boldsymbol \Theta} ^{\mathsf p}_{(\sigma _{N,0}\omega )}} \le
C\Big ( \nm f{{\boldsymbol \Theta} _{(\omega )}^{\mathsf p}} +\sum
_{j=1}^N \nm {\partial _j^Nf}{{\boldsymbol \Theta} _{(\omega
)}^{\mathsf p}}\Big ),
\end{equation}
and the result follows in this case.

\par

If we instead split up $f$ into $\sum \fy _jf$, then similar arguments
show that
\begin{equation}\label{uppskattning2}
\nm f{{\boldsymbol \Theta} ^{\mathsf p}_{(\sigma _{0,N}\omega )}} \le
C\Big ( \nm f{{\boldsymbol \Theta} _{(\omega )}^{\mathsf p}} +\sum
_{k=1}^N \nm {x _k^Nf}{{{\boldsymbol \Theta} ^{\mathsf p}_{(\omega
)}}}\Big ),
\end{equation}
and the result follows in the case $N_1=0$ and $N_2=N$ from
this estimate.

\par

The general case follows now if combine \eqref{uppskattning1} with
\eqref{uppskattning2}, which proves (2). The proof is complete.
\end{proof}

\par

\begin{proof}[Proof of Proposition \ref{identities2}.]
The first embeddings in \eqref{embcoorb1} follows immediately from
Proposition \ref{p1.4AA}. Next we prove \eqref{embcoorb2}. Let $\ep
>0$ be chosen such that $s_2-2\ep >n(q+1)/q$, $\mathbf E_{a,\omega
_N}$ be as in Section \ref{sec2}, and set
$$
F_{a,\omega _N}(\xi ,\eta ,z) = \Big ( \iint _{\rr {2n}} \, \sup
_{y\in \rr n} \mathbf E_{a,\omega _N} (X,\xi ,\eta ,z)^{p}\, dxd\zeta
\Big )^{1/p}.
$$
Then H{\"o}lder's inequality gives
\begin{multline*}
\nm a{{\boldsymbol \Theta} _{(\omega _N)}^{\mathsf p}} =  \int _{\rr
n} \Big ( \iint _{\rr {2n}}  F_{a,\omega _N}(\xi ,\eta ,z) ^q\, d\xi
dz\Big )^{1/q}\, d\eta
\\[1ex]
= 
\int _{\rr n} \Big ( \iint _{\rr {2n}}  F_{a,\omega _{N+s_2}}(\xi
,\eta ,z)^q\eabs {\xi ,\eta ,z}^{-s_2q} \, d\xi dz\Big )^{1/q}\, d\eta
\\[1ex]
\le
\int _{\rr n} \Big ( \iint _{\rr {2n}}  F_{a,\omega _{N+s_2}}(\xi
,\eta ,z)^q\eabs {\xi ,z}^{-(2n+\ep )} \, d\xi dz\Big )^{1/q}\eabs
\eta ^{-(n+\ep )}\, d\eta
\\[1ex]
\le C\nm {F_{a,\omega _{N+s_2}}}{L^\infty} =C\nm a{{\boldsymbol
\Theta} _{(\omega _{N+s_2})}^{\mathsf p_2}},
\end{multline*}
where
$$
C=\Big ( \iint _{\rr {2n}} \eabs {\xi ,z}^{-(2n+\ep )} \, d\xi dz\Big
)^{1/q}\int _{\rr n} \eabs \eta ^{-(n+\ep )}\, d\eta <\infty .
$$
This proves the first inclusion in \eqref{embcoorb2}. The second
inclusion follows by similar arguments. The details are omitted.

\par

Next we prove \eqref{embcoorb3}. By Lemma \ref{lemma1identities2} it
follows that we may assume that $\omega =1$ and $N=0$. By Remark
\ref{p1.7} (2) we have
$$
{\boldsymbol \Theta}^{\mathsf {p_1}} \subseteq M^{\infty ,1}\subseteq
C\cap L^{\infty} .
$$
Furthermore, if $\chi \in \mathscr S(\rr {3n})$ is such that $\chi
(0)=(2\pi )^{-3n/2}$, then it follows by Fourier's inversion formula
that
$$
a(X) = \iiint _{\rr {3n}}V_\chi a(X,\xi ,\eta ,z)e^{i(\scal x\xi +\scal y\eta
+\scal \zeta z)}\, d\xi d\eta dz.
$$
Hence Minkowski's inequality gives
\begin{multline*}
\nm a{C ^{0, p}} =\Big ( \iint _{\rr {2n}}\, \big (\sup _{y\in \rr
n}|a(X)|\big )^p\, dxd\zeta \Big )^{1/p}
\\[1ex]
\le
\Big ( \iint _{\rr {2n}}\, \sup _{y\in \rr
n}\Big ( \iiint _{\rr {3n}}|V_\chi a(X,\xi ,\eta ,z)|\, d\xi d\eta dz
\Big )^p\, dxd\zeta \Big )^{1/p}
\\[1ex]
\le
\iiint _{\rr {3n}} \Big ( \iint _{\rr {2n}}\, \sup _{y\in \rr
n}|V_\chi a(X,\xi ,\eta ,z)|^p\, dxd\zeta \Big )^{1/p}\, d\xi d\eta dz
=\nm a{\boldsymbol \Theta ^{\mathsf p_1}}.
\end{multline*}
This proves the right embedding in \eqref{embcoorb3}.

\par

In order to prove the left embedding in \eqref{embcoorb3} we observe
that
$$
|V_\chi a(X,\xi ,\eta ,z)| \le (2\pi )^{-3n/2}\int |\chi
(X_1-X)a(X_1)|\, dX_1 = (2\pi )^{-3n/2} (|a|*|\check \chi | )(X),
$$
which together with Young's inequality give
\begin{multline*}
\nm a{\boldsymbol \Theta ^{\mathsf p_2}} =\sup _{\xi ,\eta ,z}\Big
(\iint _{\rr {2n}}\sup _{y\in \rr n}|V_\chi a(X,\xi ,\eta ,z)|^p\,
dxd\zeta \Big )^{1/p}
\\[1ex]
\le
C(\iint _{\rr {2n}}\sup _{y\in \rr n}(|a|*|\check \chi|)(X)^p\,
dxd\zeta \Big )^{1/p}
\\[1ex]
\le C\nm \chi {L^1}(\iint _{\rr {2n}}\sup _{y\in
\rr n}|a(X)|^p\, dxd\zeta \Big )^{1/p} = C\nm \chi {L^1}\nm a{C^{0,p}},
\end{multline*}
for some constant $C$. Hence if $\omega (X,\xi ,\eta ,z)=\eabs {\xi
,\eta ,z}^{-3n-1}$, then it follows from Lemma \ref{lemma1identities2}
that
\begin{multline*}
\nm a{{\boldsymbol \Theta} ^{\mathsf p_1}} \le C _1\nm {\omega
^{-1}(D)a}{{\boldsymbol \Theta} ^{\mathsf p_1}_{(\omega )}}
\le C _2\sum _{|\alpha |\le 3n+1}\nm {a^{(\alpha )}}{{\boldsymbol
\Theta}^{\mathsf p_1}_{(\omega )}}
\\[1ex]
\le C _2\sum _{|\alpha |\le 3n+1}\nm {a^{(\alpha )}}{\boldsymbol \Theta
^{\mathsf p_2}}\le C _3\sum _{|\alpha |\le 3n+1}\nm {a^{(\alpha
)}}{C^{0,p}} =C_3\nm a{C^{N,p}},
\end{multline*}
for some constants $C_1,\dots ,C_3$. This proves \eqref{embcoorb3} and
the result follows.
\end{proof}

\par

\begin{cor}\label{identities3}
Let $N$, $\omega _s$ and $\mathsf p$ be as in Proposition
\ref{identities2}. Then
$$
C_{(\omega )}^{\infty ,p} = \boldsymbol \cap _{N\ge 0}\Theta _{(\omega
_N)}^{\mathsf p}
$$
\end{cor}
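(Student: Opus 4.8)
The plan is to deduce the identity as a purely formal consequence of the three chains of continuous embeddings collected in Proposition \ref{identities2}, by intersecting them over $N\ge 0$ and exploiting the monotonicity of the spaces $\boldsymbol\Theta_{(\omega_s)}^{\mathsf q}$ with respect to the scalar weight parameter $s$. First I would record this monotonicity: since $\omega_s\le C\omega_{s'}$ whenever $s\le s'$, Proposition \ref{p1.4AA}(2) yields $\boldsymbol\Theta_{(\omega_{s'})}^{\mathsf q}\hookrightarrow\boldsymbol\Theta_{(\omega_s)}^{\mathsf q}$ for every fixed exponent quadruple $\mathsf q$. From this I would extract the cofinality remark that, for any fixed real constant $c$, one has $\cap_{N\ge 0}\boldsymbol\Theta_{(\omega_{N+c})}^{\mathsf q}=\cap_{N\ge 0}\boldsymbol\Theta_{(\omega_N)}^{\mathsf q}$, because both index families $\{N+c\}_{N\ge 0}$ and $\{N\}_{N\ge 0}$ increase to $+\infty$ while the spaces shrink as the index grows; the analogous statement under integer shifts holds for the scale $C_{(\omega)}^{N,p}$.

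Next I would establish $\cap_{N}\boldsymbol\Theta_{(\omega_N)}^{\mathsf p}=\cap_{N}\boldsymbol\Theta_{(\omega_N)}^{\mathsf p_1}$. The inclusion $\supseteq$ is immediate from the left embedding in \eqref{embcoorb1}. For the reverse inclusion I would invoke the right embedding in \eqref{embcoorb2}, namely $\boldsymbol\Theta_{(\omega_N)}^{\mathsf p}\hookrightarrow\boldsymbol\Theta_{(\omega_{N+s_1})}^{\mathsf p_1}$; intersecting over $N$ and applying the cofinality remark with $c=s_1$ and $\mathsf q=\mathsf p_1$ gives $\cap_N\boldsymbol\Theta_{(\omega_N)}^{\mathsf p}\subseteq\cap_N\boldsymbol\Theta_{(\omega_N)}^{\mathsf p_1}$.

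Then I would identify $\cap_N\boldsymbol\Theta_{(\omega_N)}^{\mathsf p_1}$ with $C_{(\omega)}^{\infty,p}$ via \eqref{embcoorb3}. The right embedding there gives $\boldsymbol\Theta_{(\omega_N)}^{\mathsf p_1}\subseteq C_{(\omega)}^{N,p}$, so $\cap_N\boldsymbol\Theta_{(\omega_N)}^{\mathsf p_1}\subseteq\cap_N C_{(\omega)}^{N,p}=C_{(\omega)}^{\infty,p}$; the left embedding gives $C_{(\omega)}^{N+3n+1,p}\subseteq\boldsymbol\Theta_{(\omega_N)}^{\mathsf p_1}$, whence $C_{(\omega)}^{\infty,p}=\cap_N C_{(\omega)}^{N+3n+1,p}\subseteq\cap_N\boldsymbol\Theta_{(\omega_N)}^{\mathsf p_1}$, the first equality again coming from the cofinality remark. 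Chaining the last two steps produces $C_{(\omega)}^{\infty,p}=\cap_{N\ge 0}\boldsymbol\Theta_{(\omega_N)}^{\mathsf p}$, which is the claim.

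Since all the genuine analysis is already contained in Proposition \ref{identities2}, the only thing demanding care is the bookkeeping of the fixed weight shifts $s_1$, $s_2$ and $3n+1$. The hard part, such as it is, will be to phrase the cofinality argument cleanly, so that intersecting over the shifted families $\omega_{N+s_1}$ and $\omega_{N+3n+1}$ demonstrably recovers the full projective limit; this rests solely on the monotonicity from Proposition \ref{p1.4AA}(2) together with the shifts being constants independent of $N$. I would also remark that, as every embedding used is continuous, the resulting set-theoretic equality is in fact an identity of the underlying projective-limit topologies.
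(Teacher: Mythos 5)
Your proof is correct and is essentially the paper's intended argument: Corollary \ref{identities3} is stated there without proof as an immediate consequence of Proposition \ref{identities2}, and your route — intersecting the embedding chains \eqref{embcoorb1}--\eqref{embcoorb3} over $N$ and absorbing the fixed weight shifts $s_1$ and $3n+1$ via the monotonicity of $\boldsymbol \Theta _{(\omega _s)}^{\mathsf q}$ in $s$ from Proposition \ref{p1.4AA}{\,}(2) — is exactly the bookkeeping those embeddings are designed to support. The concluding remark that the continuity of all the embeddings upgrades the set-theoretic identity to one of projective-limit topologies is a harmless (and correct) bonus beyond what the corollary asserts.
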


\par

\begin{rem}\label{remidentities2}
Similar properties with similar motivations as those in Proposition {identities2}, Lemma \ref{lemma1identities2} and Corollary {identities3}, and their proofs, also holds when the $\Theta _{(\omega _N)}^{\mathsf p}$ spaces and $C^{N,p}_{(\omega )}$ spaces are replaced by the modulation space $M^{p,q}_{(\omega )}(\rr n)$ for $\omega \in \mathscr P(\rr {2n})$ and
$$
\sets {f\in \mathscr S'(\rr {n})}{f^{(\alpha )}\in M^{p,q}_{(\omega )}(\rr n)\, |\alpha |\le N}
$$
respectively. (Cf. \cite {To9}.)
\end{rem}

\par

Now we may combine Proposition \ref{identities2} with the results in Section \ref{sec2} to
obtain continuity properties for certain type of
Fourier integral operator when acting on modulation spaces. The following result is a
consequence of Theorem \ref{thm3.1C} and Proposition \ref{identities2}.

\par

\begin{thm}\label{thm3.1D}
Assume that $n_1=n_2=m=n$, $\omega \in \mathscr P(\rr {6n})$ and
$\widetilde \omega \in \mathscr P(\rr {3n})$ satisfy
$$
\omega (X,\xi ,\eta ,z)=\widetilde \omega (X)\eabs {\xi ,\eta ,z}^N
$$
for some constant $N$, and that $\chi$, $\omega _j$,
$v$ and $\fy$ for $j=0,1,2$ are the same as in Subsection
\ref{ssec2.1}. Also assume that
$p\in [1,\infty]$, $a\in C^{\infty ,p}_{(\widetilde \omega )}(\rr {3n})$, and
that $|\det (\fy ''_{y ,\zeta})|\ge \mathsf
d$ and \eqref{detphicond} hold for some $\mathsf d>0$. Then the following is true:
\begin{enumerate}
\item {\rm{(i)--(ii)}} in Subsection \ref{ssec2.2} holds;

\vrum 

\item $\op _\fy (a)\in \mathscr I_p(M^2_{(\omega _1)},M^2_{(\omega _2)})$.
\end{enumerate}
\end{thm}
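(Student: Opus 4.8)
The plan is to deduce Theorem \ref{thm3.1D} from the coorbit-space results of Subsection \ref{ssec2.5} by realising the smooth amplitude class $C^{\infty ,p}_{(\widetilde \omega )}$ inside the spaces $\boldsymbol \Theta ^{\mathsf p}_{(\omega )}(\overline V)$ that serve as amplitude spaces there; this realisation is exactly the purpose of Proposition \ref{identities2}. Concretely, the Schatten-von Neumann conclusion will come from Theorem \ref{thm3.1C}{\,}(2) and the kernel/continuity assertions (i)--(ii) from Theorem \ref{thm3.1B}. Throughout I would work with the splitting \eqref{vectspcond}, so that $\overline V=(V_2,V_1,V_1',V_2')$ and $\boldsymbol \varrho =y$; with this identification the determinant hypothesis $|\det (\fy ''_{\boldsymbol \varrho ,\zeta})|\ge \mathsf d$ of Theorems \ref{thm3.1B} and \ref{thm3.1C} is precisely the assumed condition $|\det (\fy ''_{y,\zeta})|\ge \mathsf d$, and the weight data $\omega _0,\omega _1,\omega _2,v,\fy$ are the ones fulfilling the relations of Subsection \ref{ssec2.1}.

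First I would write $\omega =\omega _N$, where $\omega _N(X,\xi ,\eta ,z)=\widetilde \omega (X)\eabs {\xi ,\eta ,z}^N$ in the notation of Proposition \ref{identities2}, matching the assumption on $\omega$ in the statement. Since $a\in C^{\infty ,p}_{(\widetilde \omega )}\subseteq C^{N+3n+1,p}_{(\widetilde \omega )}$, the left-hand embedding in \eqref{embcoorb3} places $a$ in $\boldsymbol \Theta ^{\mathsf p_1}_{(\omega )}(\overline V)$ with $\mathsf p_1=(\infty ,p,1,1)$, with control of the coorbit norm by $\nm a{C^{N+3n+1,p}_{(\widetilde \omega )}}$. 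This is the only substantive reduction; everything after it amounts to choosing exponents. For part (1), I would use the elementary embedding in \eqref{embcoorb1} to pass from $\mathsf p_1=(\infty ,p,1,1)$ to $(\infty ,p,p,1)$, so that $a$ lies in the amplitude space of Theorem \ref{thm3.1B}; that theorem then yields (i)--(ii) of Subsection \ref{ssec2.2}, together with the asserted norm bounds, at once.

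For part (2), I would instead keep $a$ in the smaller space $\boldsymbol \Theta ^{(\infty ,p,1,1)}_{(\omega )}(\overline V)$, which is exactly the hypothesis of Theorem \ref{thm3.1C} taken with $q=1$. The point of working at $q=1$ is that the admissibility requirement $q\le \min (p,p')$ there is then automatic for every $p\in [1,\infty ]$; since \eqref{detphicond} and $|\det (\fy ''_{y,\zeta})|\ge \mathsf d$ are assumed, Theorem \ref{thm3.1C}{\,}(2) delivers $\op _\fy (a)\in \mathscr I_p(M^2_{(\omega _1)},M^2_{(\omega _2)})$ uniformly in $p$. The one place where genuine care is needed — and the step I expect to be the main obstacle, even though it is mild — is checking that the factorised weight $\omega =\widetilde \omega (X)\eabs {\xi ,\eta ,z}^N$ is compatible with $\omega _0,\omega _1,\omega _2,v$ through the structural relations \eqref{vvrel} and \eqref{weightsineq} of Subsection \ref{ssec2.1}, in particular that $v$ dominates $\eabs {\cdo }^{N}$ so that the moderateness of $\omega$ in the frequency variables inherited from $\widetilde \omega \in \mathscr P(\rr {3n})$ holds. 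Once this compatibility is in hand, no further estimates are required and the theorem follows formally from the two cited theorems and the embeddings of Proposition \ref{identities2}.
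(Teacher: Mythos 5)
Your proposal is correct and follows essentially the same route as the paper, which derives Theorem \ref{thm3.1D} precisely as a consequence of Proposition \ref{identities2} (the embedding \eqref{embcoorb3} placing $a\in C^{\infty ,p}_{(\widetilde \omega )}$ into $\boldsymbol \Theta ^{(\infty ,p,1,1)}_{(\omega _N)}(\overline V)$ with the splitting \eqref{vectspcond}, so that $\boldsymbol \varrho =y$) combined with the coorbit results of Subsection \ref{ssec2.5} — your use of Theorem \ref{thm3.1B} for (i)--(ii) and of Theorem \ref{thm3.1C}{\,}(2) with $q=1$, where $q\le \min (p,p')$ is automatic, is exactly the intended argument. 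The only superfluous point is your final worry about weight compatibility: the relations \eqref{vvrel} and \eqref{weightsineq} for $\omega =\widetilde \omega (X)\eabs {\xi ,\eta ,z}^N$ together with $\omega _0,\omega _1,\omega _2,v$ are already part of the hypothesis that these data are as in Subsection \ref{ssec2.1}, so nothing remains to be verified there.
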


\par

\vspace{1cm}


\begin{thebibliography}{2000}

\bibitem{AF} {K. Asada, D. Fujiwara} \emph{On some oscillatory
integral transformations in $L^2(\rr n)$}, Japan. J. Math. \textbf {4}
(1978).

\vspace{0.1cm}

\bibitem{BaC} {W. Baoxiang, H. Chunyan} \emph{Frequency-uniform
decomposition method for the generalized BO, KdV and NLS equations},
Preprint, (2007).

\vspace{0.1cm}

\bibitem{BL} {J. Bergh and J. L{\"o}fstr{\"o}m}
\emph{Interpolation Spaces, An Introduction}, Springer-Verlag, {Berlin
Heidelberg NewYork}, 1976.

\vspace{0.1cm}

\bibitem{BS} {Birman, Solomyak} \emph{Estimates for the singular
numbers of integral operators (Russian)}, Usbehi
Mat. Nauk. \textbf{32}, (1977), 17--84.

\vspace{0.1cm}

\bibitem{Bu0} {A. Boulkhemair} \emph{$L^2$ estimates for
pseudodifferential operators},
Ann. Scuola Norm. Sup. Pisa Cl. Sci \textbf{22} (4) (1995), 155--183.

\vspace{0.1cm}

\bibitem{Bu1} \bysame \emph{Remarks on a Wiener type
pseudodifferential algebra and Fourier integral operators},
Math. Res. L. \textbf 4 (1997), 53--67.

\vspace{0.1cm}

\bibitem{Bu2} \bysame \emph{$L^2$ estimates for Weyl quantization},
J. Funct. Anal. \textbf{165} (1999), 173--204.

\vspace{0.1cm}

\bibitem{CT1} F. Concetti, J. Toft \emph{Trace ideals for Fourier
integral operators with non-smooth symbols , \rm {in: L. Rodino,
B. W. Schulze,and M. W. Wong (Eds), Pseudo-Differential Operators:
Partial Differential Equations and Time-Frequency Analysis}}, Fields
Inst. Comm. \textbf{52} (2007), pp. 255--264.

\vspace{0.1cm}

\bibitem{CT2} \bysame \emph{Schatten-von Neumann properties for
Fourier integral operators with non-smooth symbols, I}, Ark. Mat. (to
appear).

\vspace{0.1cm}

\bibitem{DS} {M. Dimassi, J. Sj{\"o}strand} \emph{Spectral
Asymptotics in the Semi-Classical Limit}, vol 268, London
Math. Soc. Lecture Note Series, Cambridge University Press, Cambridge,
New York, Melbourne, Madrid, 1999.

\vspace{0.1cm}

\bibitem{Fe1} H. G. Feichtinger \emph{Un espace de Banach de
distributions temp\'er\'es sur les groupes localement compacts ab\'eliens
(French)}, C. R. Acad. Sci. Paris S\'er. A-B 290 \textbf{17} (1980),
A791--A794.

\vspace{0.1cm}

\bibitem{Fe2} H. G. Feichtinger \emph{Banach spaces of
distributions of Wiener's type and interpolation, \rm {in:
P. Butzer, B. Sz. Nagy and E. G{\"o}rlich (Eds), Proc.
Conf. Oberwolfach, Functional Analysis and Approximation,
August 1980}}, Int. Ser. Num. Math. \textbf{69} Birkh{\"a}user Verlag,
Basel, Boston, Stuttgart, 1981, pp. 153--165.

\vspace{0.1cm}

\bibitem{Fe3} \bysame \emph{Banach convolution algebras of
Wiener's type, {\rm in: Proc. Functions, Series, Operators in
Budapest}}, {Colloquia Math. Soc. J. Bolyai, North Holland
Publ. Co.}, Amsterdam Oxford NewYork, 1980.

\vspace{0.1cm}

\bibitem{Fe4} \bysame \emph{Modulation spaces on locally compact
abelian groups. Technical report}, {University of
Vienna}, Vienna, 1983; also in: M. Krishna, R. Radha,
S. Thangavelu (Eds) Wavelets and their applications, Allied
Publishers Private Limited, NewDehli Mumbai Kolkata Chennai Hagpur
Ahmedabad Bangalore Hyderbad Lucknow, 2003, pp.99--140.

\vspace{0.1cm}

\bibitem{Fe5} \bysame \emph{Atomic characterizations of
modulation spaces through Gabor-type representations, \rm {in:
Proc. Conf. on Constructive Function Theory,}} Rocky Mountain
J. Math. \textbf{19} (1989), 113--126.

\vspace{0.1cm}

\bibitem{FG1}  {H. G. Feichtinger and K. H. Gr{\"o}chenig}
\emph{Banach spaces related to integrable group representations and
their atomic decompositions, I}, J. Funct. Anal. \textbf{86}
(1989), 307--340.

\vspace{0.1cm}

\bibitem{FG2} \bysame \emph{Banach spaces related to integrable
group representations and their atomic decompositions, II},
Monatsh. Math. \textbf{108} (1989), 129--148.

\vspace{0.1cm}

\bibitem{FG4} \bysame \emph{Gabor frames and time-frequency
analysis of distributions}, {J. Functional
Anal. (2)} \textbf {146} (1997), 464--495.

\vspace{0.1cm}

\bibitem{FK} {H. G. Feichtinger, W. Kozek} \emph{Operator
Quantization on LCA Groups \rm {in: H. G. Feichtinger and Thomas
Stromer (Eds) Gabor Analysis and Algorithms, Theory and Applications}}
{Birkh{\"a}user}, Boston Basel Berlin, 1998.

\vspace{0.1cm}

\bibitem{Fo}  {G. B. Folland} \emph
{Harmonic analysis in phase space}, {Princeton U. P., Princeton},
1989.

\vspace{0.1cm}

\bibitem{Gc2} \bysame \emph{Foundations of
Time-Frequency Analysis}, {Birkh{\"a}user}, Boston, 2001.

\vspace{0.1cm}

\bibitem{GH1}  {K. H. Gr{\"o}chenig and C. Heil} \emph
{Modulation spaces and pseudo-differential operators}, Integral
Equations Operator Theory (4) \textbf{34} (1999), 439--457.

\vspace{0.1cm}

\bibitem{H}  L. H{\"o}rmander \emph{The Analysis of Linear
Partial Differential Operators}, vol i--iv, 
{Springer-Verlag}, Berlin Heidelberg NewYork Tokyo, 1983, 1985.

\vspace{0.1cm}

\bibitem{Kon} H. K{\"o}nig, \emph{Eigenvalue Distribution of
Compact Operators}, Birkh{\"a}user, Boston, 1986.

\vspace{0.1cm}

\bibitem{LP} {J. L. Lions, J. Peetre} \emph{Sur une classe
d'espaces d'interpolation}, Inst. Hautes \'Etudes
Sci. Publ. Math. \textbf{19} (1964), 5--68.

\vspace{0.1cm}

\bibitem{PT1} {S. Pilipovi\'c, N. Teofanov} \emph {Wilson Bases
and Ultramodulation Spaces}, {Math. Nachr.} \textbf {242} (2002),
179--196.

\vspace{0.1cm}

\bibitem{RS}  {M. Reed, B. Simon} \emph{Methods of modern
mathematical physics}, Academic Press, London New York, 1979.

\vspace{0.1cm}

\bibitem{RuS1}  {M. Ruzhansky, M. Sugimoto} \emph{Global $L^2$
estimates for a class of Fourier integral operators with symbols in
Besov spaces}, Russian Math. Surv. \textbf{58} (2003), 1044--1046.

\vspace{0.1cm}

\bibitem{RuS2}  {M. Ruzhansky, M. Sugimoto} \emph{Global $L^2$
boundedness theorems for a class of Fourier integral operators},
Comm. Part. Diff. Eq. \textbf{31} (2006), 547--569.

\vspace{0.1cm}

\bibitem{RuS3}  {M. Ruzhansky, M. Sugimoto} \emph{A smoothing property
of Schr{\"o}dinger equations in the critical case},
Math. Ann. \textbf{335} (2006), 645--673.

\vspace{0.1cm}

\bibitem{Si} {B. Simon} \emph{Trace ideals and their
applications} I, London Math. Soc. Lecture Note Series,
Cambridge University Press, Cambridge London New York Melbourne,
1979.

\vspace{0.1cm}

\bibitem{Sj1}  {J. Sj{\"o}strand} \emph{An algebra of
pseudodifferential operators}, {Math. Res. L.} \textbf 1 (1994),
185--192.

\vspace{0.1cm}

\bibitem{Sj2} \bysame \emph{Wiener type algebras of
pseudodifferential operators}, S\'eminaire Equations aux D\'eriv\'ees
Partielles, Ecole Polytechnique, 1994/1995, {Expos\'e n$^{\circ}$ IV.}

\vspace{0.1cm}

\bibitem{ST} {M. Sugimoto, N. Tomita}\emph{The dilation property of
modulation spaces and their inclusion relation with Besov
spaces},Preprint, (2006).

\vspace{0.1cm}

\bibitem{Ta}  {K. Tachizawa} \emph{The boundedness of
pseudo-differential operators on modulation spaces},
Math. Nachr. \textbf{168} (1994), 263--277.

\vspace{0.1cm}

\bibitem{Te}  {N. Teofanov} \emph{Ultramodulation spaces and
pseudodifferential operators}, {Endowment Andrejevi\'c}, Beograd,
2003.

\vspace{0.1cm}

\bibitem{To1}  {J. Toft} \emph{Continuity and Positivity
Problems in Pseudo-Differential Calculus, Thesis}, {Department of
Mathematics, University of Lund}, Lund, 1996.

\vspace{0.1cm}

\bibitem{To2} \bysame \emph{Subalgebras to a Wiener type Algebra of
Pseudo-Differential operators}, {Ann. Inst. Fourier (5)} \textbf {51}
(2001), 1347--1383.

\vspace{0.1cm}

\bibitem{To7} \bysame \emph{Continuity properties for
modulation spaces with applications to pseudo-differential calculus,
I}, {J. Funct. Anal. (2)}, \textbf{207} (2004),
399--429.

\vspace{0.1cm}

\bibitem{To8} \bysame \emph{Continuity
properties for modulation spaces with applications to
pseudo-differential calculus, II}, {Ann. Global Anal. Geom.},
\textbf{26} (2004), 73--106.

\vspace{0.1cm}

\bibitem{To9} \bysame \emph{Convolution and embeddings for weighted
modulation spaces {\rm {in: R. Ashino, P. Boggiatto, M. W. Wong (Eds)
Advances in Pseudo-Differential Operators,}}} Operator Theory:
Advances and Applications \textbf{155}, Birkh{\"a}user Verlag, Basel,
2004, pp. 165--186.

\vspace{0.1cm}

\bibitem{Toft4} \bysame \emph{Continuity and Schatten
properties for pseudo-differential operators on modulation spaces {\rm
{in: J. Toft, M. W. Wong, H. Zhu (Eds) Modern Trends in
Pseudo-Differential Operators,}}} Operator Theory: Advances and
Applications \textbf{172}, Birkh{\"a}user Verlag, Basel, 2007,
pp. 173--206.

\vspace{0.1cm}

\bibitem{W1} {M. W. Wong} \emph {Weyl transforms},
{Springer-Verlag}, 1998.

\end{thebibliography}
\end{document}